\newcommand{\msc}[2][2000]{%
  \let\@oldtitle\@title%
  \gdef\@title{\@oldtitle\footnotetext{#1 \emph{Mathematics subject
        classification.} #2}}%
}
\theoremstyle{plain}
\newtheorem{theorem}{Theorem} [section]
\newtheorem{lemma}[theorem]{Lemma}
\newtheorem{corollary}[theorem]{Corollary}
\newtheorem{proposition}[theorem]{Proposition}
\theoremstyle{remark}
\newtheorem{remark}[theorem]{Remark}
\def\R{{\mathbb R}}
\def\N{{\mathbb N}}
\def\O{\mathcal O}
\def\1{\mathbbm{1}}
\def\({\left(}
\def\){\right)}
\def\<{\left\langle}
\def\>{\right\rangle}
\def\Eq#1#2{\mathop{\sim}\limits_{#1\rightarrow#2}}
\def\Tend#1#2{\mathop{\longrightarrow}\limits_{#1\rightarrow#2}}
\def\d{{\partial}}
\def\eps{\varepsilon}
\def\si{{\sigma}}
\def\K{\mathcal{K}}
\def\ll{{\lambda_*}}
\newcommand{\enstq}[2]{\left\{#1~\middle|~#2\right\}}
\newcommand{\dd}{\mathrm{d}}
\numberwithin{equation}{section}
\begin{document}
\title[On NLS ground state]{On the ground state of the nonlinear Schr\"{o}dinger equation: asymptotic behavior at the endpoint powers} 

\author[R. Carles]{R\'emi Carles}
\address[R. Carles]{CNRS\\ IRMAR - UMR
  6625\\ F-35000 Rennes, France}
\email{Remi.Carles@math.cnrs.fr}

\author[Q. Chauleur]{Quentin Chauleur}
\address[Q. Chauleur]{Univ. Lille, CNRS, Inria\\ UMR 8524 - Laboratoire Paul Painlevé\\ F-59000 Lille, France}
\email{quentin.chauleur@inria.fr}

\author[G. Ferriere]{Guillaume Ferriere}
\address[G. Ferriere]{Univ. Lille, CNRS, Inria\\ UMR 8524 - Laboratoire Paul Painlevé\\ F-59000 Lille, France}
\email{guillaume.ferriere@inria.fr}

\author[D. E. Pelinovsky]{Dmitry E. Pelinovsky}
\address[D. E. Pelinovsky]{Department of Mathematics, McMaster University\\ Hamilton,
  Ontario\\ L8S 4K1, Canada} 
\email{pelinod@mcmaster.ca}

\begin{abstract}
We consider the ground states of the nonlinear Schr\"{o}dinger
equation, which stand for radially symmetric and exponentially decaying
solutions on the full space. We investigate their behaviors at both
endpoint powers of the nonlinearity, up to some rescaling to infer
non-trivial limits. One case corresponds to the limit towards a
Gaussian function called Gausson, which is the ground state of the
stationary logarithmic Schrödinger equation. The other case, for
dimension at least three, corresponds to the limit towards the
Aubin-Talenti algebraic soliton. We prove strong convergence with
explicit bounds for both cases, and provide detailed
asymptotics. These theoretical results are illustrated with numerical
approximations.  
\end{abstract}

\thanks{A CC-BY public
copyright license has been applied by the authors to the present
document and will be applied to all subsequent versions up to the
Author Accepted Manuscript arising from this submission.}
\maketitle

\section{Introduction}
\label{sec:intro}

We consider the ground states of the stationary nonlinear
Schr\"odinger equation
\begin{equation}\label{eq:phi}
  -\Delta \phi+\phi = |\phi|^{2\si}\phi,\quad x\in \R^d,
\end{equation}
with emphasis on the dependence of the solution upon the parameter
$\si>0$ in the nonlinearity. It has been known since the breakthrough
works \cite{BGK83,BL83a} that ground states, defined as a minimizer of the action, exist in $H^1(\R^d)$ provided that the nonlinearity is
$H^1$-subcritical: $0 < \si < \infty$ for $d = 1,2$ and 
$0<\si<\frac{2}{d-2}$ for $d\ge 3$. The uniqueness of such solutions,
up to translation and sign change, was established in \cite{BL83a} for
$d=1$, and completely settled in \cite{Kwong} for $d\ge 2$, after a
series of important steps, cited in \cite{Kwong}. The ground states are the (unique) positive, radially symmetric solutions to \eqref{eq:phi}. We recall
that $\phi\in \mathcal{C}^2(\R^d)$, and that $\phi,\nabla \phi$ decay
exponentially (see e.g. \cite[Theorem~8.1.1]{CazCourant}). 

In the present paper, we examine the behavior of the ground states when
the parameter $\sigma$ in the nonlinearity goes to the endpoint
values, $\si=0$ in any dimension, and $\si=\si_*(d):=\frac{2}{d-2}$ when
$d\ge 3$. In what follows, we omit the dependence on $d$ in $\sigma_*$.

For the limit $\si\to 0$, the Taylor expansion
\begin{equation}
\label{expansion-log}
  |\phi|^{2\si} = \exp\(\si \ln |\phi|^2\) = 1+\si\ln |\phi|^2 +\O(\si^2)
\end{equation}
suggests, in order to get a nontrivial limit, to consider, instead of
\eqref{eq:phi},
\begin{equation}
  \label{eq:u}
  \Delta u +\frac{1}{\si}\(|u|^{2\si}-1\)u=0.
\end{equation}
As we work on the whole space $\R^d$, this amounts to considering the
rescaling
\begin{equation}\label{eq:scaling-phi-u}
  u(x) = \phi\(\frac{x}{\sqrt\si}\). 
\end{equation}
Formally, when $\si$ goes to zero, the solution $u$ to \eqref{eq:u} is
expected to converge in some sense to a solution of the
stationary logarithmic Schr\"odinger equation,
\begin{equation}\label{eq:log-u}
  \Delta u +u\ln\(|u|^2\)=0. 
\end{equation}
Equation \eqref{eq:log-u} is the stationary counterpart of the time dependent logarithmic Schr\"odinger equation,
\begin{equation}\label{eq:logNLS}
  i\d_t \psi +\Delta \psi = \lambda \psi \ln\(|\psi|^2\),
\end{equation}
with $\lambda\in \R$ ($\lambda=-1$ here) initially introduced in
\cite{BiMy76}. It was remarked there that \eqref{eq:log-u} has explicit
ground states for any $d\in \N$, called Gaussons (see also
\cite{BiMy79}),
\begin{equation}
  \label{eq:Gaussian}
  u_0(x) = e^{\frac{d-|x|^2}{2}}. 
\end{equation}
The Cauchy problem for \eqref{eq:logNLS} in the case $\lambda<0$ was
studied initially in \cite{CaHa80}, and the orbital stability of the
Gaussons was proven in \cite{Caz83} in the radial case, and in \cite{Ar16}
for the general case. The fact that the Gausson \eqref{eq:Gaussian} is
the only (up to translation) positive, $\mathcal{C}^2$ solution of
\eqref{eq:log-u} vanishing at 
infinity,   was proven in \cite{dAMS14}. Uniqueness of
positive, radially symmetric solutions of \eqref{eq:log-u} vanishing at
infinity as well as their derivative  was 
established in  \cite{Troy2016}, 
for $1\le d\le 9$. Viewing ground states as solutions of 
 a  constrained minimization problem (minimization of the action on
 the Nehari manifold), uniqueness of ground states (up to translation and phase
 modification) was proven in \cite{Ar16}.

The convergence of ground states of
\eqref{eq:phi} to ground states for \eqref{eq:log-u} was considered
for the first time in \cite{WangZhang2019}. 
The scaled equation \eqref{eq:u} is also considered in \cite{GMS-p}, where
the limit $\si\to 0$ is addressed, for $x$ belonging to some bounded
and convex domain. In the case $x\in \R^d$, it is proven in
\cite{WangZhang2019} that ground states to \eqref{eq:u} converge to
ground states of \eqref{eq:log-u} in $H^1(\R^d)\cap
\mathcal{C}^{2,\alpha}(\R^d)$ for any $\alpha \in (0,1)$. In the present paper,
we revisit this convergence result by providing a rate of
convergence in $\O(\si)$ as suggested by \eqref{expansion-log}. 

In \cite{WangZhang2019}, based
on a result from \cite{FQTY08}, the authors infer that for any $\si\in
(0,\si_*)$, there is no positive solution to \eqref{eq:phi}  or, equivalently, to \eqref{eq:u}, in view of \eqref{eq:scaling-phi-u}, such
that $$\|\phi\|_{L^\infty}= \|u\|_{L^\infty}\le e^{d/2}.$$ However,
since in \cite{FQTY08}, the assumption $d\ge 3$ is made, one should be
cautious with low dimensions. Indeed, when $d=1$, ground states for
\eqref{eq:phi} are given explicitly by
\begin{equation*}
  \phi(x) =(1+\sigma)^{\frac{1}{2\sigma}} \cosh \left(
    \si x \right)^{-\frac{1}{\sigma}}. 
\end{equation*}
We note that for $\si>0$,
\begin{equation*}
  \|\phi\|_{L^\infty} = \phi(0) =
  (1+\sigma)^{\frac{1}{2\sigma}} =e^{\frac{1}{2\si}\ln (1+\si)}<
  e^{\frac{1}{2}},
\end{equation*}
so Theorem~1.3 in \cite{WangZhang2019} cannot be true for $d=1$. We
refer to Remark~\ref{rem:correction} for a more precise discussion.

In the $H^1$-critical case $\si=\si_*$ (for $d\ge 3$), the existence
of ground states goes back to \cite{Aubin76} and \cite{Talenti76}
independently. In view of Pohozaev identity (see
e.g. \cite{CazCourant}), nontrivial $\dot{H}^1 \cap L^{2\sigma_*+2}$ solutions satisfy the following equation, instead of \eqref{eq:phi},
\begin{equation}
\label{eq-phi-star}
  \Delta \phi_* +|\phi_*|^{2\si_*}\phi_*=0.
\end{equation}
Positive radially symmetric solutions to \eqref{eq-phi-star}
cease to be unique, due to a scaling invariance: 
if $\phi_*(x)$ is a solution to \eqref{eq-phi-star}, then so is
$\lambda^{1/\si_*}\phi_*(\lambda x)$ for any $\lambda>0$. Up to this
scaling invariance, the radially symmetric positive solutions are unique, given by
\begin{equation}
\label{alg-sol}
  \phi_*(x) = \frac{1}{(1+a |x|^2)^{(d-2)/2}},\quad a =
  \frac{\si_*^2}{4(1+\si_*)}= \frac{1}{d(d-2)}. 
\end{equation}
For any $d\ge 3$, $\phi_*$ belongs to the \emph{homogeneous} Sobolev
space $\dot H^1(\R^d)$ (that is, $\nabla\phi_*\in L^2(\R^d)$), but
$\phi_*\in L^2(\R^d)$ only if $d\ge 5$.
The limit $\si\to \si_*$
has  been considered previously, in \cite{FeGa04,GaPePuSe03,GaSe02},
where the Laplacian is replaced more generally by the $m$-Laplacian
($m=2$ corresponding to the regular Laplacian). Similar to the case
$\si\to 0$, where the
rescaling \eqref{eq:scaling-phi-u} was introduced in order to get a
nontrivial limit, the limit $\si\to \si_*$ requires a modification in
order to make the limit regular, and establish a connection with the
algebraic soliton \eqref{alg-sol}. This is discussed more precisely in
Section~\ref{sec:intro-si_*}, where we also compare our results with
those of \cite{FeGa04,GaPePuSe03,GaSe02}.

We conclude this introduction by illustrating in
Figure~\ref{fig:evolution_max_d_1_2_3_4_5.png} the dependence of the 
$L^{\infty}$-norm of the ground states of \eqref{eq:u} for
$d=1,\ldots,5$. The dependence is monotonically decreasing in dimensions $d=1$ and $d=2$, whereas it is monotonically 
increasing and diverges as $\sigma \to \sigma_*$ in dimensions $d=4$
and $d=5$, suggesting a renormalization in order to study the limit
towards $\phi_*$ in \eqref{alg-sol}, as evoked above. For $d=3$, the dependence first
decreases for small values of $\sigma$ and then increases and diverges
at $\sigma_*= 2$.  

\begin{figure}[htb!]
   \centering
      \includegraphics[width=0.9\textwidth,trim = 25cm 0cm 25cm 2cm, clip]{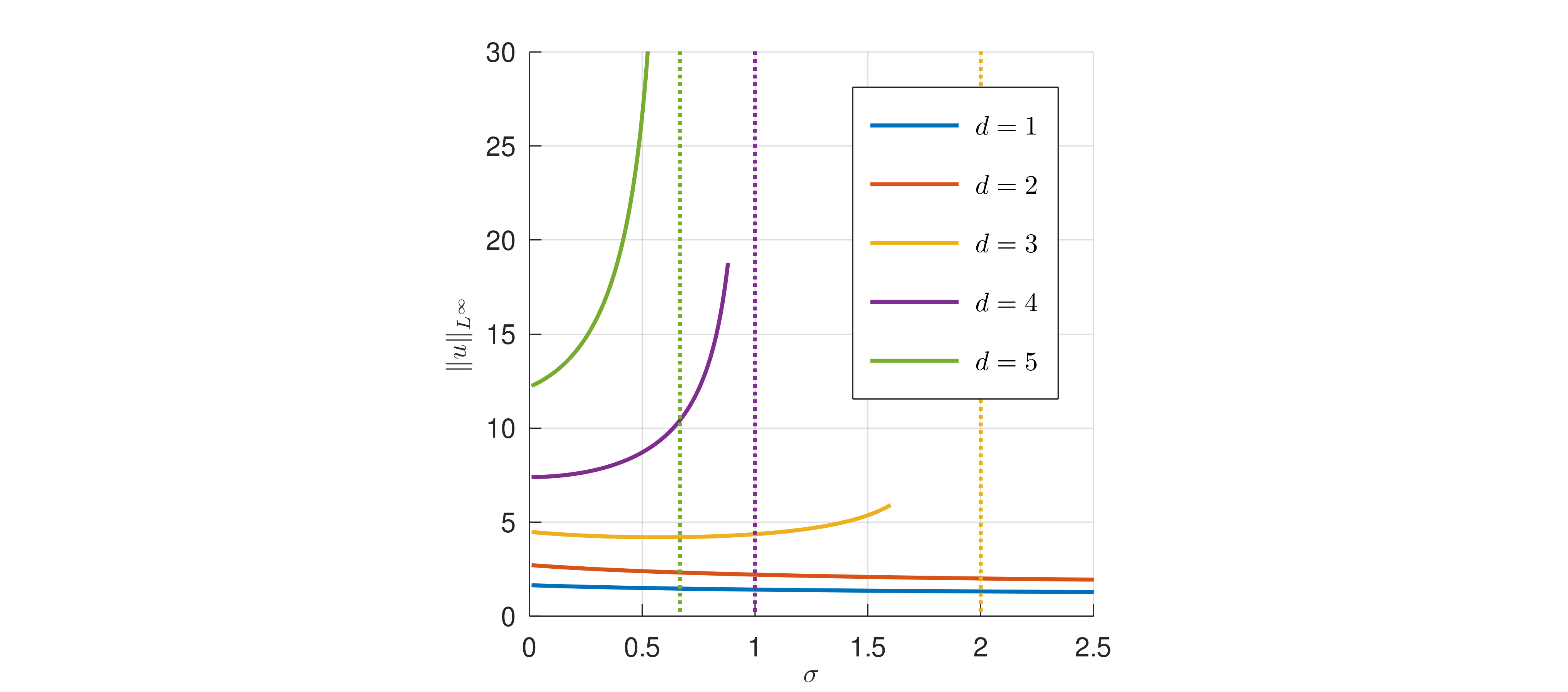}	 
	\caption{Maximum of the ground states $\|u\|_{L^{\infty}}$ versus $\sigma$ for $d=1,\ldots,5$ (see Remark \ref{rem:stiffness} for comments on the limit $\si \to \si_*$).}
	\label{fig:evolution_max_d_1_2_3_4_5.png}
\end{figure}

Precise statements of the main results on the asymptotic behavior of the ground states at the endpoint powers are given in Section~\ref{sec-main_results}. Section~\ref{sec:continuity} is dedicated to continuity properties 
with respect to the nonlinearity parameter $\si \in (0,\si_*)$. In Section~\ref{sec:log}, we consider the limit $\si\to 0$. The other
endpoint $\si\to \si_*$ is studied in
Section~\ref{sec-alg-limit}. Details about the numerical methods and
the numerical approximations are presented in Section~\ref{sec:num}.  
 
\smallskip

{\bf Notations.} The differential element is denoted by $\dd$ to avoid any confusion with the space dimension $d$. For radially symmetric functions and $1\le p<\infty$, we denote by $L^p_r=L^p_r(0,\infty)$ the 
set of functions $f=f(r)$ such that
\begin{equation*}
 \| f \|_{L^p_r}^p := \int_0^\infty r^{d-1}|f(r)|^p \dd r<\infty,
\end{equation*}
and by $H_r^k=H^k_r(0,\infty)$ for $k\in \N^*$ the set of functions
$f$ such that
\begin{equation*}
   \| f \|_{H^k_r}^2 := \int_0^\infty r^{d-1}
   \left(|f^{(k)}(r)|^2+\ldots+|f'(r)|^2+|f(r)|^2 \right) \dd
   r<\infty.  
\end{equation*}
These definitions discard the measure of the unit sphere in $\R^d$ to
lighten notations. This measure is only included in
Section~\ref{sec:num} for numerical computations. Finally we denote by 
\[ \langle f,g \rangle := \int_0^\infty r^{d-1} f(x) g(x) \dd x    \]
the scalar product on $L^2_r$.

\section{Main results} \label{sec-main_results}

In order to emphasize the dependence of the ground state profile upon the
nonlinearity parameter $\si$, we denote the
radially symmetric, positive, and monotonically decreasing solution to \eqref{eq:u} by $u_\si$, and make the standard abuse of notation in  $u_\si(x)=u_\si(r)$, $r=|x|$. 
In the radial coordinate $r$, studying \eqref{eq:u} amounts to considering the
family of solutions of the initial-value problem  
\begin{equation} 
\label{eq:ivp}
\left\{ \begin{aligned}
&u''(r) + \frac{d-1}{r} u'(r) + \frac{1}{\sigma}(|u(r)|^{2 \sigma} -1)
  u(r) = 0, \quad r >0, \\
&          u(0) = \alpha, \quad u'(0) = 0,
        \end{aligned} \right.
\end{equation}
for $\alpha > 0$ and $\sigma \in (0,\si_*)$. This family of solutions 
is denoted by $u(r;\alpha,\sigma)$. It is known (see \cite[Theorem
1.3]{FQTY08}) that for each $\sigma \in (0,\sigma_*)$,
there exists a unique value of $\alpha = \alpha(\sigma)$ such that
$u_{\sigma}(r) := u(r;\alpha(\sigma),\sigma)$ is a positive,
monotonically decreasing function in $\mathcal{C}^2(0,\infty) \cap 
L^{\infty}(0,\infty)$, with exponential decay as
$r \to \infty$. This is the ground state profile of \eqref{eq:u}. Due to its uniqueness (\cite{Kwong}), it 
coincides up to a scalar multiplication with a minimizer of the
variational problem  
\begin{equation}
\label{eq:variation}
\inf_{u \in H^1_r} \enstq{ \| u' \|_{L^2_r}^2 + \frac{1}{\si} \| u \|_{L^2_r}^2  }{ \ \| u \|_{L^{2\si+2}_r} = 1 }, 
\end{equation}
the existence of which was considered in \cite{BGK83,BL83a}. 

\subsection{Continuity with respect to $\si$}

Related to the ground state profile $u_{\sigma} \in \mathcal{C}^2(0,\infty) \cap
L^{\infty}(0,\infty)$, we also consider the linearized operator  
$\mathcal{L}_{\sigma} : H^2_r  \to L^2_r$ given by 
\begin{equation}
\label{eq:L}
\mathcal{L}_{\sigma} = - \frac{\dd^2}{\dd r^2} - \frac{d-1}{r} \frac{\dd}{\dd r}
+ \frac{1}{\sigma} \left(1 - u_{\sigma}^{2\sigma} \right) - 2
u_{\sigma}^{2\sigma}. 
\end{equation} 
In Section \ref{sec:continuity}, we prove the following theorem, which allows us to control the dependence of 
$\alpha(\sigma)$ upon $\sigma \in (0,\sigma_*)$.

\begin{theorem} \label{th-dependence}
The mapping $\si \mapsto u_\si$ is $\mathcal{C}^1$ in $(0, \si_*)$ with values in $H^1_r$. Moreover, $\frac{\dd u_\si}{\dd \si}=\chi_\si$ where $\chi_\si \in H^2_r \cap \mathcal{C}^2$ is the unique solution in $H^1_r$ to
\begin{equation} \label{eq:inhom}
\mathcal{L}_{\sigma} \chi_\si = \frac{1}{\sigma^2} (1 -
|u_{\sigma}|^{2\sigma}) u_{\sigma} + \frac{1}{\sigma} (\ln
 u_{\sigma}^2) |u_{\sigma}|^{2\sigma} u_{\sigma}. 
\end{equation}
The mapping $(0,\sigma_*) \ni \sigma \mapsto \alpha(\sigma) \in (0,\infty)$ 
is $\mathcal{C}^1$ and $\alpha'(\sigma) = \chi_\si(0)$.
\end{theorem}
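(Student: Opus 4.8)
The plan is to realize $u_\sigma$ as the solution of an implicit equation and apply the implicit function theorem in Banach spaces, the linearization being exactly $\mathcal{L}_\sigma$. Writing $\Delta=\partial_r^2+\frac{d-1}{r}\partial_r$ for the radial Laplacian, I introduce, on a neighborhood of $u_\sigma$ in $H^2_r$ consisting of positive, exponentially decaying profiles, the map
\[
  F(\sigma,u):=\Delta u+\frac1\sigma\bigl(|u|^{2\sigma}-1\bigr)u\in L^2_r,
\]
so that $F(\sigma,u_\sigma)=0$. A direct computation gives
\[
  \partial_uF(\sigma,u_\sigma)\,v=\Delta v+\Bigl(2u_\sigma^{2\sigma}+\tfrac1\sigma\bigl(u_\sigma^{2\sigma}-1\bigr)\Bigr)v=-\mathcal{L}_\sigma v,
\]
whereas, using $\partial_\sigma\bigl(|u|^{2\sigma}u\bigr)=\bigl(\ln|u|^2\bigr)|u|^{2\sigma}u$, the derivative $\partial_\sigma F(\sigma,u_\sigma)$ is exactly the right-hand side of \eqref{eq:inhom}, which I denote $R_\sigma$. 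Granting that $F$ is of class $\mathcal{C}^1$ and that $\mathcal{L}_\sigma\colon H^2_r\to L^2_r$ is an isomorphism, the implicit function theorem yields that $\sigma\mapsto u_\sigma$ is $\mathcal{C}^1$ with values in $H^2_r$, hence in $H^1_r$, and that $\chi_\sigma:=\partial_\sigma u_\sigma=-\bigl(\partial_uF\bigr)^{-1}\partial_\sigma F=\mathcal{L}_\sigma^{-1}R_\sigma$, which is precisely \eqref{eq:inhom}.

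The main obstacle is the invertibility of $\mathcal{L}_\sigma$ on the radial space. I argue spectrally: since $\mathcal{L}_\sigma=-\Delta+\frac1\sigma-\bigl(\frac1\sigma+2\bigr)u_\sigma^{2\sigma}$ and $u_\sigma$ decays exponentially, the potential $-\bigl(\frac1\sigma+2\bigr)u_\sigma^{2\sigma}$ is a relatively compact perturbation of $-\Delta+\frac1\sigma$, so Weyl's theorem gives $\sigma_{\mathrm{ess}}(\mathcal{L}_\sigma)=[\tfrac1\sigma,\infty)$. As $\tfrac1\sigma>0$, the self-adjoint operator $\mathcal{L}_\sigma$ is Fredholm of index zero at the origin, hence an isomorphism as soon as its kernel in $H^2_r$ is trivial. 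This triviality is the radial non-degeneracy of the ground state: on $H^2(\R^d)$ translation invariance gives $\ker\mathcal{L}_\sigma=\operatorname{span}\{\partial_{x_1}u_\sigma,\dots,\partial_{x_d}u_\sigma\}$, and each $\partial_{x_j}u_\sigma=u_\sigma'(r)\,x_j/r$ belongs to the $\ell=1$ spherical sector (consistently, $u_\sigma'$ is a zero mode of the $\ell=1$ operator $\mathcal{L}_\sigma+\frac{d-1}{r^2}$ with no zero on $(0,\infty)$, hence its ground state); none of these is radial, so the radial kernel is $\{0\}$. This non-degeneracy is the delicate analytic input; it belongs to the uniqueness circle of ideas of \cite{Kwong} and, at the level of radial shooting, to the transversality behind \cite[Theorem~1.3]{FQTY08}.

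It remains to verify the two technical points. For the $\mathcal{C}^1$ regularity of $F$, the only nontrivial contribution is the Nemytskii map $u\mapsto|u|^{2\sigma}u$. Since $\sigma<\sigma_*$ one checks that $4\sigma+2<\frac{2d}{d-4}$ for $d\ge5$ (with no constraint for $d\le4$, where $H^2$ embeds into every $L^q$, $q<\infty$, or into $L^\infty$): the inequality is in fact strict already at $\sigma=\sigma_*$. Thus the power $4\sigma+2$ stays strictly below the $H^2$--Sobolev threshold, which, together with the exponential decay of $u_\sigma$, ensures that $(\sigma,u)\mapsto|u|^{2\sigma}u$ is $\mathcal{C}^1$ from a neighborhood of $(\sigma,u_\sigma)$ into $L^2_r$, with the differentials computed above, so that the implicit function theorem applies as stated.

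Finally, for the statement on $\alpha$, I use that $\alpha(\sigma)=u_\sigma(0)$, but point evaluation at the origin is not continuous on $H^2_r$ when $d\ge5$, so the $H^2_r$ convergence must be upgraded locally. Freezing the equation $-\Delta u_\sigma=\frac1\sigma\bigl(1-u_\sigma^{2\sigma}\bigr)u_\sigma$ on a fixed ball $B_\rho$ and invoking interior $W^{2,p}$ elliptic estimates (the right-hand side being bounded because $u_\sigma\in L^\infty$), a bootstrap shows that $\sigma\mapsto u_\sigma$ is $\mathcal{C}^1$ into $\mathcal{C}^1(\overline{B_\rho})$; in particular $\sigma\mapsto u_\sigma(0)=\alpha(\sigma)$ is $\mathcal{C}^1$ with $\alpha'(\sigma)=\chi_\sigma(0)$, and the same regularity yields $\chi_\sigma\in H^2_r\cap\mathcal{C}^2$. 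Uniqueness of $\chi_\sigma$ in $H^1_r$ is then immediate: any $w\in H^1_r$ with $\mathcal{L}_\sigma w=0$ lies, by elliptic regularity, in $H^2_r$, hence in the trivial radial kernel, so $w=0$.
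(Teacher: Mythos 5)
Your proposal is correct and takes essentially the same approach as the paper: the same implicit function theorem setup with $F(\sigma,u)$ on $H^2_r\times(0,\sigma_*)$, the same computation of $\partial_u F$ and $\partial_\sigma F$, and the same key input, namely invertibility of $\mathcal{L}_\sigma:H^2_r\to L^2_r$ via Weyl's theorem together with the classical non-degeneracy of the ground state (\cite{Kwong,Weinstein85}) restricted to the radial sector. The only divergence is in the closing step, and it is minor: where you upgrade the $\mathcal{C}^1$ dependence in $H^2_r$ to pointwise statements ($\alpha'(\sigma)=\chi_\sigma(0)$ and $\chi_\sigma\in\mathcal{C}^2$, needed since evaluation at $r=0$ is not continuous on $H^2_r$ for $d\ge 5$) by interior elliptic estimates and a bootstrap, the paper invokes Peano's theorem for smooth dependence of the radial ODE on parameters; both are routine and equally valid.
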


\subsection{The limit $\si \to 0$}

 We gather the main results of Section \ref{sec:log} in the following statement:
\begin{theorem}\label{theo:log}
  Let $d\ge 1$. As $\si\to 0$, the ground state profile $u_\si$ of
  \eqref{eq:ivp} converges to the Gausson $u_0$ given by
  \eqref{eq:Gaussian} in $H^1_r \cap \mathcal{C}^{2, \alpha} \cap \mathcal{C}^\infty_{\textnormal{loc}}$ for any $0 < \alpha < 1$.
We have the asymptotic expansion
  \begin{equation*}
    u_\si = u_0 + \si \mu_0 + \si e_\si,
  \end{equation*}
  where
  \begin{equation*}
    \mu_0(r) = \frac{1}{12} \left[ d (d-4) + 4 (1-d) r^2 + r^4
  \right] u_0(r),
\end{equation*}
and for every $0\le s<1$,  $ e_\si$ goes to zero in
$H^s_r\cap \mathcal C^\infty_{\rm loc}$ as $\si\to 0$.
In particular, the mapping $\si \mapsto \alpha(\si)$ is continuously
differentiable as $\si\to 0$, with 
  \begin{equation*}
    \alpha(0) =e^{d/2} \quad \text{and} \quad
    \alpha'(0)=\frac{d(d-4)}{12}e^{d/2}. 
  \end{equation*}
\end{theorem}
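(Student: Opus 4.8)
\emph{Proof strategy.} The plan is to turn the qualitative convergence of \cite{WangZhang2019} into a quantitative first-order expansion by linearizing \eqref{eq:u} around the Gausson, and then to read off the endpoint behavior of $\alpha(\sigma)=u_\sigma(0)$. I would organize the argument in four stages.

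First I would establish uniform-in-$\sigma$ a priori bounds for the ground state profile near $\sigma=0$: an $L^\infty$ bound, an $H^1_r$ bound, and uniform decay estimates for $u_\sigma$ and $u_\sigma'$. These come from the ODE \eqref{eq:ivp} together with comparison/barrier arguments, and they furnish the compactness needed to pass to the limit in \eqref{eq:ivp}. Using the uniqueness of the Gausson and \eqref{eq:Gaussian}, one recovers $u_\sigma\to u_0$, and elliptic regularity applied to \eqref{eq:u} upgrades this to $H^1_r\cap\mathcal{C}^{2,\alpha}\cap\mathcal{C}^\infty_{\mathrm{loc}}$. This stage essentially reproduces \cite{WangZhang2019} and is not where the difficulty lies.

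Second, I would analyze the limiting linearized operator. Letting $\sigma\to0$ in \eqref{eq:L} (using $\sigma^{-1}(1-u_\sigma^{2\sigma})\to r^2-d$ and $u_\sigma^{2\sigma}\to1$ pointwise) gives $\mathcal{L}_0=-\Delta-(d+2-r^2)$ on radial functions, a shifted isotropic harmonic oscillator. Its spectrum on the radial sector is $\{4n-2:n\in\N\}=\{-2,2,6,\dots\}$; in particular $\mathcal{L}_0 u_0=-2u_0$, and $0$ is \emph{not} an eigenvalue there (the kernel $\{\partial_{x_i}u_0\}$ of the full operator is purely non-radial). Hence $\mathcal{L}_0$ is invertible in the radial setting, and by the resulting spectral gap $\mathcal{L}_\sigma$ stays invertible, with uniformly bounded inverse, for $\sigma$ small.

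Third --- the heart of the matter --- I would identify the profile and control the remainder. Writing $u_\sigma=u_0+\sigma w_\sigma$ and expanding via $|u|^{2\sigma}=1+\sigma\ln|u|^2+\tfrac{\sigma^2}{2}(\ln|u|^2)^2+\mathcal{O}(\sigma^3)$, the $\mathcal{O}(\sigma)$ part of \eqref{eq:u} is $\mathcal{L}_0\mu_0=\tfrac12(d-r^2)^2u_0$; equivalently, this is the $\sigma\to0$ limit of the right-hand side of \eqref{eq:inhom}, so that $w_\sigma$ should converge to the solution $\mu_0$ of this problem. Seeking $\mu_0=p(r^2)u_0$ turns $\mathcal{L}_0$ into the operator $p\mapsto-4sp''+(4s-2d)p'-2p$ on polynomials in $s=r^2$, which is triangular with nonzero diagonal entries $4n-2$; solving the resulting finite system yields exactly the stated $\mu_0$, unique by the isomorphism property of the previous stage. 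It then remains to prove $e_\sigma:=w_\sigma-\mu_0\to0$. The main obstacle is that $\sigma^{-1}(1-u_\sigma^{2\sigma})\to r^2-d$ is \emph{not} uniform in $r$: it is $\mathcal{O}(\sigma)$ on bounded sets, but as $r\to\infty$ at fixed $\sigma$ the left-hand side saturates near $1/\sigma$ while $r^2-d\to\infty$, the crossover occurring at $r\sim\sigma^{-1/2}$. I would therefore split into a bounded region, where the Taylor remainders are genuinely $\mathcal{O}(\sigma)$, and a far region, where the uniform decay of $u_\sigma$ makes each term directly small; the uniform invertibility of $\mathcal{L}_\sigma$ from the second stage then closes the estimate, with elliptic regularity giving the $\mathcal{C}^\infty_{\mathrm{loc}}$ convergence. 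This non-uniformity is precisely what restricts the remainder to $H^s_r$ with $s<1$.

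Finally, the statements on $\alpha$ follow by evaluating at $r=0$. Integrating $\chi_\sigma=\tfrac{\dd u_\sigma}{\dd\sigma}$ from Theorem~\ref{th-dependence} and using $\chi_\sigma\to\mu_0$ gives $u_\sigma=u_0+\sigma\mu_0+\sigma e_\sigma$ with $e_\sigma\to0$; at $r=0$, since $\mu_0(0)=\tfrac{d(d-4)}{12}e^{d/2}$, this reads $\alpha(\sigma)=e^{d/2}+\sigma\tfrac{d(d-4)}{12}e^{d/2}+o(\sigma)$. Combined with $\alpha'(\sigma)=\chi_\sigma(0)\to\mu_0(0)$ and the $\mathcal{C}^1$ regularity on $(0,\sigma_*)$ of Theorem~\ref{th-dependence}, this gives continuous differentiability up to the endpoint, with $\alpha(0)=e^{d/2}$ and $\alpha'(0)=\tfrac{d(d-4)}{12}e^{d/2}$.
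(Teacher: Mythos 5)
Your overall architecture coincides with the paper's: leading-order convergence imported from \cite{WangZhang2019} and upgraded by elliptic bootstrap, identification of $\mathcal{L}_0$ as the shifted radial harmonic oscillator, the explicit corrector $\mu_0=\mathcal{L}_0^{-1}h_0$ with $h_0=\frac12(\ln u_0^2)^2u_0$, a uniform invertibility statement for the linearized problem at small $\si$, and evaluation at $r=0$. Your computation of $\mu_0$ via the triangular operator $p\mapsto -4sp''+(4s-2d)p'-2p$ in $s=r^2$ is correct and equivalent to the paper's substitution $\mu_0=\frac12 u_0\tilde{\mu}_0$; your radial spectrum $\{4n-2\}$ of $\mathcal{L}_0$ is also correct (in fact more accurate than the paper's listed second radial eigenvalue $4d-2$, the true value being $2$; only the gap $\lambda_0^{(k)}\ge 2$ for $k\ge 1$ is used, so the discrepancy is harmless).

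The genuine gap is in your second stage: the claim that ``by the resulting spectral gap $\mathcal{L}_\si$ stays invertible, with uniformly bounded inverse, for $\si$ small'' is asserted, but this is precisely where the paper invests its main technical effort, and it does not follow from the gap of $\mathcal{L}_0$ by abstract perturbation. As you yourself note in stage three, $\si^{-1}(1-u_\si^{2\si})\to r^2-d$ only locally uniformly, with crossover at $r\sim\si^{-1/2}$; hence there is no norm-resolvent convergence, and one must rule out eigenvalues near $0$ whose eigenfunctions spread to spatial infinity. The paper's mechanism is the confinement bound of Lemma~\ref{lem2} ($V_\si+K\ge 1/\eps$ for $r\ge R$, $\si\le\si_0$), which gives equitightness and hence $L^2_r$-compactness of normalized eigenfunctions (Lemma~\ref{lem3}), followed by identification of limiting eigenpairs with those of $\mathcal{L}_0$ (Lemma~\ref{lem4}, Proposition~\ref{prop:spectre}); your stage-one barrier estimates supply the confinement input, but this compactness argument must actually be run. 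Two further points of comparison. First, the paper never uses the tangent operator $\mathcal{L}_\si$ for the remainder: it writes the \emph{exact} secant equation $\widetilde{\mathcal{L}}_\si v_\si=h_\si$ with the potential \eqref{eq:V_si}, so no quadratic term in $e_\si$ ever appears, whereas your route must absorb one. Second, the paper deliberately refrains from Taylor-expanding the potential in powers of $\si$, because the expansion produces $u_0\ln\((1-\theta)u_0+\theta u_\si\)$, whose $L^2_r$ control would require a lower bound on $u_\si$ that is not available; this is why the paper proves only $e_\si\to 0$ with \emph{no rate}, by compactness plus uniqueness of the limit of $\mathcal{L}_0 v=h_0$, the restriction $s<1$ coming from interpolating $H^1_r$ boundedness against strong $L^2_r$ convergence. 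Your bounded/far splitting is fine on bounded sets (where $u_\si$ is locally bounded below), but as stated it would not deliver the quantitative $\O(\si)$ control your phrase ``genuinely $\O(\si)$'' suggests. Finally, for the endpoint $\mathcal{C}^1$ statement, your assertion $\chi_\si(0)\to\mu_0(0)$ needs its own justification; the paper obtains it from local ODE regularity (since $u_\si$ is locally bounded below, $\si\mapsto u_\si\in H^2_r(0,R)$ is $\mathcal{C}^1$ down to $\si=0$), with $e_\si\to 0$ in $\mathcal{C}^0_{\rm loc}$ then fixing the value $\alpha'(0)=\mu_0(0)$.
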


The computation of $\alpha'(0)$ is new,
and illustrated numerically in Section~\ref{sec:num}. The computation
of the correcting term $\mu_0$ is new too, as well as the
corresponding error estimate for $e_{\si}$. 
This result shows that some statements from \cite{FQTY08} and
\cite{WangZhang2019} are flawed in the case $d\le 3$. See
Remark~\ref{rem:correction} for details. 

\subsection{The limit $\si \to \si_*$}
\label{sec:intro-si_*}
 The numerical data from
 Figure~\ref{fig:evolution_max_d_1_2_3_4_5.png} suggests  that
 $\alpha(\sigma) \to \infty$ as $\sigma \to \sigma_* = \frac{2}{d-2}$
 for $d \geq 3$. In order to get the asymptotic dependence of
 $\alpha(\sigma) \to \infty$, we use the scaling transformation  
\begin{equation}
\label{eq:transformation}
u(r) = \alpha w(\rho), \quad \rho = \frac{\alpha^{\sigma}
  r}{\sqrt{\sigma}}, \quad \alpha > 0, \quad \sigma \in (0,\sigma_*). 
\end{equation}
If $u$ satisfies \eqref{eq:ivp}, then $w$ satisfies the initial-value
problem: 
\begin{equation} 
\label{eq:ivp-w}
\left\{
  \begin{array}{l}
w''(\rho) + \frac{d-1}{\rho} w'(\rho) +|w(\rho)|^{2 \sigma} w(\rho) =
    \epsilon w(\rho), \\ 
    w(0) = 1, \quad w'(0) = 0,
  \end{array}
\right.
\end{equation}
where $\epsilon := \alpha^{-2\sigma}$. Every solution $u =
u(r;\alpha,\sigma)$ of \eqref{eq:ivp} is equivalent  
to the solution $w = w(\rho;\epsilon,\sigma)$ of
\eqref{eq:ivp-w}. Again, if $u_{\sigma}(r) = 
u(r;\alpha(\sigma),\sigma)$ for some $\alpha = \alpha(\sigma)$ is the
ground state (a positive, monotonically decreasing function in
$\mathcal{C}^2(0,\infty) \cap L^{\infty}(0,\infty)$, with the fast (exponential)
decay condition as $r \to \infty$), then  
$w_{\sigma}(\rho) = w(\rho;\epsilon(\sigma),\sigma)$ is the ground
state for  $\epsilon = \epsilon(\sigma)$, where 
\begin{equation}
\label{eq:eps-alpha}
\epsilon(\sigma) := [\alpha(\sigma)]^{-2\sigma}. 
\end{equation}
The limit $\alpha(\sigma) \to \infty$ corresponds now to the limit 
$\epsilon(\sigma) \to 0$, where the limiting ground state is
represented by  the Aubin--Talenti algebraic soliton \eqref{alg-sol},
rewritten as  
\begin{equation}
\label{eq:alg-soliton}
w_*(\rho) = \frac{1}{(1+a \rho^2)^{\frac{1}{\sigma_*}}}, \quad a :=
\frac{\sigma_*^2}{4 (1+\sigma_*)}, \quad \sigma_* = \frac{2}{d-2}. 
\end{equation}
We recall (\cite{Aubin76,Talenti76}) that the Aubin--Talenti algebraic
soliton \eqref{eq:alg-soliton}  coincides up to a scalar
multiplication with the unique minimizer of the variational problem  
\begin{equation}
\label{eq:variation-alg}
\mathcal{S} := \inf_{w \in D^{1,2}_r(0,\infty)} \enstq{ \|  w'
  \|_{L^2_r} }{ \| w \|_{L_r^{\frac{2d}{d-2}}} = 1 },  
\end{equation}
where $D^{1,2}_r(0,\infty)$ is the space of closure of
$\mathcal{C}^{\infty}_{0,r}(0,\infty)$ under the norm $\| \nabla \cdot
\|_{L^2_r}$. The minimizer of \eqref{eq:variation-alg} gives the best
constant of the Sobolev inequality  
\begin{equation}
\label{eq:Sob}
\| w \|_{L_r^{\frac{2d}{d-2}}} \leq \mathcal{S}^{-\frac{1}{2}} \|  w'
\|_{L^2_r}. 
\end{equation}
Furthermore, it is only degenerate due to the one-parameter scaling
transformation introduced before, $w_*(\rho) \mapsto
\lambda^{1/\si_*} w_*(\lambda \rho)$ with $\lambda > 0$ (see
\cite{CGS89} and 
the appendix  in \cite{BianchiEgnell1991}). Changing $u_{\si}$ satisfying
\eqref{eq:ivp} to $w_\si$ satisfying \eqref{eq:ivp-w} makes the limit
$\si\to \si_*$ regular, since 
it corresponds to $\epsilon \to 0 $ in \eqref{eq:ivp-w}. In
particular, the parameter $\lambda$ in the scaling invariance is
naturally $\lambda =1$, in view of the initial condition: $w_{\si}(0)
= 1$. However, as 
the expression of $\epsilon$ is implicit, the convergence $w_\si\to
w_*$ is quite delicate. In this direction, we prove in
Section~\ref{sec-alg-limit}   
the main result given by the following theorem.

\begin{theorem}\label{theo:algebraic}
Let $d \ge 3$. As $\si\to \si_*$, the ground state $w_\si$ of
\eqref{eq:ivp-w} converges to the Aubin-Talenti algebraic soliton
$w_*$ given by \eqref{eq:alg-soliton} in $ L_r^{\infty}\cap
W^{1,\infty}_{\rm loc}$. Moreover, if $d \ge 5$, we have 
\[ 
w_\si \rightarrow w_* \quad \text{in} \ H^1_r   
\]
and 
\[
\epsilon(\sigma) \Eq \si {\si_*} \frac{(1-\si_*) (\si_*-\si)}{2
	\si_*(1+\si_*)(2+\si_*)}=-  (\si_*-\si)\epsilon'(\si_*) .
\]
For $\alpha(\si)=u_\si(0)=\| u_\si \|_{L^{\infty}(\R^d)}$
where $u_\si$ is the ground state of \eqref{eq:u}, this yields 
the asymptotic behavior for $d \geq 5$:
\begin{equation*}
  \alpha(\si) \Eq \si {\si_*} |\epsilon'(\si_*)|^{-(d-2)/4}
  (\si_*-\si)^{-(d-2)/4},\quad 
\end{equation*}
where the explicit value of $\epsilon'(\si_*)$ appears in the above formula.
\end{theorem}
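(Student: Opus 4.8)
The plan is to decouple the qualitative statement $w_\si\to w_*$ from the quantitative rate, and to read off the asymptotics of $\epsilon(\si)$ from a pair of \emph{exact} integral identities for \eqref{eq:ivp-w} rather than from a perturbation expansion around $w_*$. The first task is to show $\epsilon(\si)\to 0$ as $\si\to\si_*$. Evaluating \eqref{eq:ivp-w} at $\rho=0$ forces $w''(0)<0$ for a positive decreasing profile, hence $\alpha^{2\si}>1$ and $\epsilon=\alpha^{-2\si}<1$, so $\epsilon(\si)$ is at least bounded. If $\epsilon(\si)\not\to 0$ along some sequence $\si_n\to\si_*$, continuous dependence of the radial ODE on the parameters $(\epsilon,\si)$ produces on every compact interval a limiting profile $\bar w$ solving $\Delta\bar w+\bar w^{2\si_*+1}=\epsilon_\infty\bar w$ with $\epsilon_\infty\in(0,1]$, $\bar w(0)=1$, positive and decreasing. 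Such a solution decays exponentially (uniformly, since $\epsilon_n$ stays bounded away from $0$), hence lies in $H^1_r\cap L^2_r$, and combining its Nehari and Pohozaev identities at the critical exponent forces $\epsilon_\infty\|\bar w\|_{L^2_r}^2=0$, contradicting $\bar w(0)=1$. The only care needed is to propagate enough uniform decay so that the compactly obtained limit is a genuine decaying solution to which Pohozaev applies.

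Once $\epsilon(\si)\to 0$, the same continuous-dependence argument applied to the initial-value problem \eqref{eq:ivp-w} gives $w_\si\to w_*$ together with all derivatives, uniformly on compact sets, which is the $\mathcal{C}^\infty_{\mathrm{loc}}$ and $W^{1,\infty}_{\mathrm{loc}}$ convergence; monotonicity of both profiles and uniform smallness of the far field then upgrade this to $L^\infty_r$. The $H^1_r$ statement for $d\ge 5$ is more delicate: for $\epsilon>0$ the profile $w_\si$ decays exponentially, but only beyond the scale $\rho\sim\epsilon^{-1/2}\to\infty$, and in the intermediate zone it tracks the algebraic tail $w_*(\rho)\sim\rho^{-(d-2)}$. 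The $L^2_r$ mass of that tail is governed by $\int^{\epsilon^{-1/2}}\rho^{3-d}\,\dd\rho$, which converges uniformly exactly when $3-d<-1$, i.e.\ $d\ge 5$; this is precisely the source of the dimensional restriction, and the real work is to make the tail bound uniform in $\si$ so that no $L^2_r$ mass escapes and $\|w_\si\|_{L^2_r}\to\|w_*\|_{L^2_r}$.

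For the rate I would test \eqref{eq:ivp-w} against $w_\si$ to obtain the Nehari identity $\|w_\si'\|_{L^2_r}^2+\epsilon\|w_\si\|_{L^2_r}^2=\|w_\si\|_{L^{2\si+2}_r}^{2\si+2}$, and against the Pohozaev multiplier $\rho\,w_\si'$ to obtain $\tfrac{d-2}{2}\|w_\si'\|_{L^2_r}^2=\tfrac{d}{2\si+2}\|w_\si\|_{L^{2\si+2}_r}^{2\si+2}-\tfrac{d\epsilon}{2}\|w_\si\|_{L^2_r}^2$. Eliminating the $L^{2\si+2}_r$ term and using $2-(d-2)\si=(d-2)(\si_*-\si)$ yields the exact relation
\[
  \epsilon(\si)=\frac{(d-2)(\si_*-\si)}{d\,\si}\,\frac{\|w_\si'\|_{L^2_r}^2}{\|w_\si\|_{L^2_r}^2}.
\]
Passing to the limit with the $H^1_r$ convergence ($d\ge 5$) replaces the ratio by $\|w_*'\|_{L^2_r}^2/\|w_*\|_{L^2_r}^2$, which I would evaluate explicitly through Beta-function integrals of $w_*=(1+a\rho^2)^{-1/\si_*}$; this produces exactly the constant $\tfrac{(1-\si_*)}{2\si_*(1+\si_*)(2+\si_*)}$ announced in the theorem, so that $\epsilon(\si)\sim c\,(\si_*-\si)$. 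Inverting $\epsilon=\alpha^{-2\si}$ as $\alpha=\epsilon^{-1/(2\si)}$ and letting $\si\to\si_*$ in the exponent (the correction $(\si_*-\si)\ln(\si_*-\si)\to 0$ being negligible) then turns this linear law into the stated power-law behaviour of $\alpha(\si)$, with the explicit constant $C(d)$ coming from $c$.

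The hard part will be the convergence, not the bookkeeping. Because the limiting exponent is $H^1$-critical, the Sobolev embedding loses compactness, so both the identification of the limit as $w_*$ with the correct normalization (the scaling parameter $\lambda=1$ being fixed by $w_\si(0)=1$) and the uniform tail control underlying the $H^1_r$ convergence require genuine quantitative decay estimates rather than soft compactness. Once those are in place, the two integral identities, the explicit evaluation of the limiting Dirichlet-to-mass ratio, and the final inversion are all routine.
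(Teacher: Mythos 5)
Your Pohozaev bookkeeping coincides exactly with the paper's: your identity $\epsilon(\si)=\frac{(d-2)(\si_*-\si)}{d\si}\,\|w_\si'\|_{L^2_r}^2/\|w_\si\|_{L^2_r}^2$ is \eqref{eps-expression} in disguise (since $1+\si_*=\frac{d}{d-2}$), and your final passage to the limit in the ratio, the Beta-type evaluation for $w_*$, and the inversion $\alpha=\epsilon^{-1/(2\si)}$ with the negligible $(\si_*-\si)\ln(\si_*-\si)$ correction reproduce Section~\ref{sec:epsilon-prime} essentially verbatim. Your qualitative route to the $L^\infty_r\cap W^{1,\infty}_{\rm loc}$ convergence (ODE continuity on compacts plus monotonicity of both profiles in the far field) is also sound for the theorem as stated, though weaker than the paper's Proposition~\ref{prop:unif-cv}, which gives a quantitative Gr\"onwall estimate and a global logarithmic rate. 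For $\epsilon(\si)\to 0$, however, note that the paper avoids your compactness-and-nonexistence argument entirely: eliminating $\|w_\si'\|^2_{L^2_r}$ instead of the potential term gives \eqref{alter-expr}, and the one-line interpolation $\|w_\si\|_{L^{2\si+2}_r}^{2\si+2}\le\|w_\si\|_{L^\infty_r}^{2\si}\|w_\si\|_{L^2_r}^2=\|w_\si\|_{L^2_r}^2$ yields the quantitative bound \eqref{eq:eps-sublin}, i.e.\ $\epsilon(\si)=\O(\si_*-\si)$, which is moreover needed as input for the Gr\"onwall step. Your version has a real loose end: a locally uniform limit of nonnegative nonincreasing profiles need not decay, and if the transition radius where $w_{\si_n}^{2\si_n}\sim\epsilon_n$ escapes to infinity, the limit is the constant equilibrium $\ell=\epsilon_\infty^{1/(2\si_*)}>0$, to which the Pohozaev nonexistence argument does not apply; "propagating enough uniform decay" is exactly the unresolved point.

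The genuine gap is the $H^1_r$ convergence for $d\ge 5$, which is the heart of the theorem and which your rate argument presupposes. You reduce it to a uniform algebraic tail bound $w_\si(\rho)\lesssim \rho^{-(d-2)}$ up to the scale $\epsilon^{-1/2}$, correctly identifying why $d\ge5$ appears, but you give no mechanism for proving it, and the obvious ones fail: a supersolution/barrier argument in the region where $w_\si$ is superharmonic would need $(d-2)w_\si(\rho_0)\le \rho_0|w_\si'(\rho_0)|$ at the matching point, and this inequality is violated by the limit profile itself at every $\rho$, since $(d-2)w_*(\rho)+\rho w_*'(\rho)=(d-2)(1+a\rho^2)^{-d/2}>0$; pointwise tail control at the critical exponent is precisely where compactness is lost. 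The paper spends the bulk of Section~\ref{sec-alg-limit} building a substitute: it passes to the doubly-normalized minimizers $\varphi_\si$ of \eqref{eq:varphi_si}, obtains uniform $L^\gamma_r$ bounds for some $\gamma\in(1,2)$ via Stein's multiplier theorem and Hardy--Littlewood--Sobolev (Lemmas~\ref{lem:Stein_theorem}--\ref{lem:varphi_zeta}, where $d\ge5$ enters), extracts strong $L^2_r$ limits by Strauss's lemma (Lemma~\ref{lem:extraction_strong_convergence_L2}), identifies the limit through the classification of radial solutions of the critical equation (Lemma~\ref{lem:equation_sigma_star}) together with continuity of the Gagliardo--Nirenberg constants (Lemma~\ref{lem:limit_mu_sigma}), and only then transfers the convergence back to $w_\si$ via Kwong's uniqueness and the normalization argument for $\beta_\si=\|\varphi_\si\|_{L^\infty_r}$. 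None of this apparatus, nor any workable replacement, appears in your plan, so the closing claim that "once those are in place \dots{} all routine" concedes exactly the part that constitutes the proof.
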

The uniform convergence $w_\si\to w_*$ was obtained initially in
\cite[Theorem~4]{GaSe02}, and the precise asymptotic behavior of
$\alpha(\si)$, as stated above,  was established in
\cite[Corollary~1]{GaPePuSe03}, by using fine ODE arguments. In
\cite[Theorem~2]{GaPePuSe03}, the convergences $w_\si\to 
w_*$ and $\nabla w_\si\to \nabla w_*$ are proved at every point, and
uniformly outside any neighborhood of the origin. The concentration of
the function $u_\si$ at the origin as $\si\to \si_*$ is measured
in terms of convergence of measures in \cite[Theorem~3]{FeGa04},
\begin{equation*}
  u_\si^{2^*}= u_\si^{d\si_*}\rightharpoonup \nu_d \delta_0,\quad
  |\nabla u_\si|^2\rightharpoonup \nu_d \delta_0 \quad\text{as }\si\to \si_*,
\end{equation*}
where $\nu_d$ is computed explicitly, and corresponds to the value
given by the algebraic soliton $w_*$, up to rescaling. In particular,
$\|\nabla w_\si\|_{L^2}\to \|\nabla w_*\|_{L^2}$. In view of the
uniform convergence $w_\si\to w_*$, this implies the strong
convergence $w_\si\to w_*$ in the homogeneous Sobolev space $\dot
H^1(\R^d)\hookrightarrow L^{2^*}(\R^d)$.  In
Proposition~\ref{prop:unif-cv}, we revisit the uniform convergence
$w_\si\to w_*$, and provide an elementary proof of the convergence in
$ L_r^{\infty}\cap W^{1,\infty}_{\rm loc}$. Our main contribution
compared to the results from \cite{FeGa04,GaPePuSe03,GaSe02} concerns
the convergence in $H^1$, which we establish by variational arguments
as opposed to ODE techniques there. In particular, the convergence in
$L^2$ seems to be completely new. 

\section{Continuity properties in $\sigma \in (0,\sigma_*)$}
\label{sec:continuity}

\subsection{Some properties of the linearized operator}
\label{sec:prop-lin}

The operator $\mathcal{L}_\si$, defined in \eqref{eq:L},  is a
self-adjoint operator in $L^2_r$. Due to the 
exponential decay $u_{\sigma}(r) \to 0$ as $r \to \infty$,  
the essential spectrum of $\mathcal{L}_{\sigma}$ is located on
$[\sigma^{-1},\infty)$  by Weyl's theorem. 

Since $u_{\sigma}$ is characterized variationally as a constrained minimizer of 
\eqref{eq:variation} with a single constraint, the Morse index of
$\mathcal{L}_{\sigma}$ (the number of negative eigenvalues in
$L^2_r$) is either $0$ or $1$, and as
\begin{equation*}
 \langle \mathcal{L}_{\sigma} u_{\sigma}, u_{\sigma} \rangle = -2
 \int_0^{\infty} r^{d-1} |u_{\sigma}(r)|^{2 \sigma + 2} \dd r < 0,  
\end{equation*}
the Morse index is exactly one. Moreover, due to non-degeneracy of constrained 
minimizers of \eqref{eq:variation} (\cite{Kwong,Weinstein85}), the
kernel of $\mathcal{L}_{\sigma}$ is trivial and the rest of its
spectrum in $L^2_r$ is strictly positive and bounded away
from~$0$. By Sturm's theorem, the uniquely defined solution  
$v \in \mathcal{C}^2(0,\infty)$ of the initial-value problem  
\begin{equation} 
\label{eq:ivp-lin}
\left\{ \begin{aligned}
&v''(r) + \frac{d-1}{r} v'(r) + \frac{1}{\sigma}(|u_{\sigma}(r)|^{2
  \sigma} -1) v(r) + 2 |u_{\sigma}(r)|^{2\sigma} v(r) = 0, \\ 
& v(0) = 1, \quad v'(0) = 0, \end{aligned} \right.
\end{equation}
has a single node $r_0 \in (0,\infty)$ 
such that $v(r) > 0$ for $r \in [0,r_0)$ and $v(r) < 0$ for $r \in
(r_0,\infty)$ with the divergence $v(r) \to -\infty$ as $r \to
\infty$.

\subsection{Proof of Theorem~\ref{th-dependence}}

The existence and uniqueness of  $\chi_{\sigma} =
\mathcal{L}_{\sigma}^{-1} h \in H^2_r$ solution to \eqref{eq:inhom} with 
 \begin{equation*}
  h(r) = \frac{1}{\sigma^2} (1 - |u_{\sigma}(r)|^{2\sigma})
  u_{\sigma}(r) + \frac{1}{\sigma} (\ln u_{\sigma}(r)^2)
  |u_{\sigma}(r)|^{2\sigma} u_{\sigma}(r) \in L^2_r,         
\end{equation*}
follows by the spectral theory since ${\rm Ker}(\mathcal{L}_{\sigma}) = \{0\}$.
Moreover, bootstrapping yields $\chi_{\sigma} \in
\mathcal{C}^2(0,\infty) \cap 
L^{\infty}(0,\infty)$ with the fast (exponential)
decay $\chi_{\sigma}(r) \to 0$ as $r \to \infty$.  	
The nonlinear operator function
\begin{equation*}
 F(u,\sigma) : H^2_r \times (0,\sigma_*) \to
 L^2_r, \quad F(u,\sigma) = -\Delta_r u + \frac{1}{\sigma}
 (1 - |u|^{2\sigma}) u,   
\end{equation*}
is $\mathcal{C}^1$ in $(u,\sigma)$, and by definition
$F(u_{\sigma},\sigma) = 0$. As $u_\si$ is positive and exponentially
decreasing at infinity, the Jacobian $\mathcal{L}_{\sigma} = D_u
F(u_{\sigma},\sigma)$ maps $H^2_r$ to $L^2_r$.  
In view of Section~\ref{sec:prop-lin}, this Jacobian is
invertible. The implicit 
function theorem then implies that the mapping $(0,\sigma_*) \ni \sigma \mapsto u_{\sigma} \in
H^2_r$ is $\mathcal C^1$. From Peano's Theorem (see
e.g. \cite[Chapter~V]{Hartman}), the derivative $\frac{\dd u_\si}{\dd \si}$
also belongs to $\mathcal{C}^2$, and satisfies the same 
equation as $\chi_\si$, that is \eqref{eq:inhom}. By uniqueness, we
conclude $\frac{\dd u_\si}{\dd \si}=\chi_\si$, 
hence Theorem~\ref{th-dependence}, since $\alpha(\si)=u_\si(0)$.

\subsection{Correspondence to earlier results}
Peano's Theorem also implies that the family of solutions of
the initial-value problem \eqref{eq:ivp} is $\mathcal{C}^1$ with respect to both
$\alpha$ and $\sigma$ with
\begin{equation*}
 v(r) := \partial_{\alpha} u(r;\alpha(\sigma),\sigma) \quad \mbox{\rm
   and} \quad \phi(r) := \partial_{\sigma} u(r;\alpha(\sigma),\sigma),  
\end{equation*}
where $v \in \mathcal{C}^2(0,\infty)$ solves  \eqref{eq:ivp-lin} 
and $\phi \in \mathcal{C}^2(0,\infty)$ solves the linear inhomogeneous
equation $\mathcal{L}_{\sigma} \phi = h$ with the initial condition
$\phi(0) = \phi'(0) = 0$. Considering $v$ goes back to
\cite{Kolodner1955}, with a first application in 
\cite{Coffman72} to prove uniqueness results, and considering $\phi$ goes back
to \cite{FelmerQuaas03}. In \cite{FQTY08}, both functions were used.
By the linear superposition principle, we have  
\begin{equation}
\label{eqs:phi}
	\phi(r) = \chi_{\sigma}(r) - \chi_{\sigma}(0) v(r),
\end{equation}
where $\chi_{\sigma} = \mathcal{L}_{\sigma}^{-1} h \in \mathcal{C}^2(0,\infty) \cap
L^{\infty}(0,\infty)$ was considered above.

The solution $\phi$ generally diverges as $r \to \infty$, if $\chi_{\sigma}(0) \neq 0$. We show that $\phi(r) < 0$ for small $r >  0$ in agreement with \cite[Lemma 3.1]{FQTY08}. Indeed, we  have $\phi''(0) = -d^{-1}
h(0)$ with
\begin{equation*}
  h(0) = \frac{1}{\sigma^2} (1 - \alpha^{2\sigma}) \alpha + \frac{1}{\sigma} (\ln \alpha^2) \alpha^{2\sigma} \alpha \equiv \mathfrak{h}(\alpha,\sigma).
\end{equation*}
Since
\begin{equation*}
  \lim_{\sigma \to 0} \mathfrak{h}(\alpha,\sigma) = \frac{1}{2} (\ln \alpha^2)^2 \alpha > 0,
\end{equation*}
and
\begin{equation*}
  \frac{\partial}{\partial \sigma} \sigma^2
  \mathfrak{h}(\alpha,\sigma) = \sigma (\ln \alpha^2)^2 \alpha > 0, 
\end{equation*}
we have $\mathfrak{h}(\alpha,\sigma) > 0$ for every $\sigma \in
(0,\sigma_*)$ and $\alpha > 0$. Therefore, $\phi''(0) < 0$ and
$\phi(r) < 0$ for small $r > 0$.  

\begin{remark}\label{rem:correction}
We will show in Section \ref{sec:lin-si-0} that 
$$
\alpha(0) = \alpha_0, \quad \alpha'(0) = \frac{d (d-4)}{12} \alpha_0,
$$
where $\alpha_0 = u_0(0) = e^{d/2}$. These results imply the following.
	\begin{itemize}
		\item The results of Theorems~1.1 and 1.2 in \cite{FQTY08} are
		incorrect for $d = 3$. Lemma~2.1 about $v(r)$ is correct and so are
		Lemmas~3.1--3.3 about $\phi(r)$. If $\alpha'(\sigma) = \phi_p(0) <
		0$, as for $d = 3$ and small $\sigma > 0$, then $\phi(r)$ stays
		negative for all $r > 0$ and diverges $\phi(r) \to -\infty$ as $r
		\to \infty$. If $\alpha'(\sigma) = \phi_p(0) > 0$, as for $d \ge 5$
		and small $\sigma > 0$, then $\phi(r)$ changes sign exactly once and
		diverges $\phi(r) \to +\infty$ as $r \to \infty$. The proofs of
		Theorems~1.1 and 1.2 in Sections~5-6 of \cite{FQTY08} are supposed
		to handle both cases; however, the outcome shows that the first case
		is mishandled.
		\item The result of Theorem~1.3 in  \cite{WangZhang2019}, based on the
		above mentioned result from \cite{FQTY08}, is incorrect
		for $1 \le d \le  3$: there are positive solutions to \eqref{eq:phi}
		such that $\|\phi\|_{L^\infty}\le e^{d/2}$ when $d\le 3$ and $\si\in
		(0,\si_*)$, given by $\phi_\si(x) = u_\si(x\sqrt\si)$. 
	\end{itemize}
\end{remark}

\section{The limit $\si\to 0$: convergence to the Gausson}
\label{sec:log}

In this section, we use the fact, proved in
\cite[Theorem~1.1]{WangZhang2019}, that for any $d\ge 1$, 
\begin{equation*}
  \|u_\si-u_0\|_{L^\infty_r}\Tend \si 0 0,
\end{equation*}
where the Gausson is given by
\begin{equation}
\label{Gausson}
u_0(r) = e^{\frac{d-r^2}{2}}.
\end{equation}
The main purpose of this section is to provide the proof of
Theorem~\ref{theo:log}. We first recall the main steps from the proof
of \cite[Theorem~1.1]{WangZhang2019}, and explain why the convergence
 also holds in $\mathcal C^{\infty}_{\rm loc}(\R^d)$.

\subsection{Leading order convergence}
\label{sec:u_si-u_0}

To prove \cite[Theorem~1.1]{WangZhang2019},
the authors establish  a variational characterization of the ground
states $u_\si$ and $u_0$, from which they infer the convergence
$u_\si\to u_0$ in $H^1(\R^d)$ and in $\mathcal{C}^{2, \alpha} (\mathbb{R}^d)$ for any $0 < \alpha < 1$ thanks to the following lemma:
\begin{lemma}[Lemma~2.1 in \cite{WangZhang2019}]
  (i) For any $\eta>0$, there exists $C_\eta>0$ such that
  \begin{equation*}
    \frac{x^{2\si}-1}{\si}\le C_\eta x^{2\eta}
  \end{equation*}
  holds for all $\si\in (0,\eta)$ and $x\ge 0$.\\
  (ii) Let $s>0$, $\delta>0$, then
  \begin{equation*}
    \frac{x^s(x^\delta-1)}{\delta}\Tend \delta 0 x^s \ln x\quad
    \text{in}\quad \mathcal C^{m,\alpha}_{\rm loc}[0,\infty),
  \end{equation*}
  where $m$ is the largest integer with $m<s$, and $\alpha\in
  (0,s-m)$. 
\end{lemma}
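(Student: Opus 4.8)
The plan is to treat the two parts separately, the second being where essentially all the work lies.

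For part (i), I would first dispose of $0 \le x \le 1$: there $x^{2\si} \le 1$, so the left-hand side is $\le 0$ while the right-hand side is $\ge 0$, and the inequality holds for any $C_\eta$. For $x \ge 1$ I would exploit convexity. The map $\si \mapsto x^{2\si} = e^{2\si \ln x}$ is convex and equals $1$ at $\si = 0$, so its difference quotient $\si \mapsto \frac{x^{2\si}-1}{\si}$ from the base point $\si = 0$ is nondecreasing. Consequently, for $0 < \si < \eta$ and $x\ge 1$,
\begin{equation*}
\frac{x^{2\si}-1}{\si} \le \frac{x^{2\eta}-1}{\eta} \le \frac{1}{\eta}\, x^{2\eta},
\end{equation*}
so $C_\eta = 1/\eta$ works.

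For part (ii), set $g_\beta(x) := x^\beta \ln x$ (with $g_\beta(0) := 0$, legitimate since $\beta > 0$) and $F_\delta := \frac{x^s(x^\delta-1)}{\delta}$, $F_0 := g_s$. The starting point is the integral representation $x^{s+\delta} - x^s = \int_0^\delta x^{s+\tau}\ln x\,\dd\tau$, which gives
\begin{equation*}
F_\delta = \frac{1}{\delta}\int_0^\delta g_{s+\tau}\,\dd\tau, \qquad F_\delta - F_0 = \frac{1}{\delta}\int_0^\delta (g_{s+\tau} - g_s)\,\dd\tau.
\end{equation*}
Fixing $R > 0$ and taking $\mathcal{C}^{m,\alpha}[0,R]$ norms, I would bound $\|F_\delta - F_0\|_{\mathcal{C}^{m,\alpha}[0,R]} \le \sup_{0\le\tau\le\delta}\|g_{s+\tau}-g_s\|_{\mathcal{C}^{m,\alpha}[0,R]}$, so the whole statement reduces to the right-continuity at $\beta = s$ of the map $\beta \mapsto g_\beta$ with values in $\mathcal{C}^{m,\alpha}[0,R]$.

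To establish this continuity I would compute $g_\beta^{(k)}(x) = x^{\beta-k}\bigl(a_k(\beta)\ln x + b_k(\beta)\bigr)$ with $a_k, b_k$ polynomial in $\beta$, observe that for $k \le m$ and $\beta \ge s$ one has $\beta - k \ge s - m > 0$, and prove two facts (writing $[\cdot]_\alpha$ for the $\alpha$-Hölder seminorm): (a) a uniform bound $\sup_{\beta\in[s,s+\delta_0]}\|g_\beta\|_{\mathcal{C}^{m,\alpha'}[0,R]} < \infty$ for some $\alpha'$ with $\alpha < \alpha' < s-m$; and (b) convergence $g_\beta \to g_s$ in $\mathcal{C}^m[0,R]$ as $\beta \to s^+$. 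I would then conclude via the interpolation inequality $[g]_\alpha \le C\,[g]_{\alpha'}^{\alpha/\alpha'}\,\|g\|_\infty^{1-\alpha/\alpha'}$ applied to $g = g_{s+\tau}^{(m)} - g_s^{(m)}$: the uniform $\alpha'$-seminorm bound from (a) combined with the uniform-norm decay from (b) forces the $\alpha$-seminorm to zero, upgrading the $\mathcal{C}^m$ convergence to $\mathcal{C}^{m,\alpha}$ convergence.

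The main obstacle is the borderline behavior at $x = 0$. The top derivative $g_\beta^{(m)}$ carries the factor $x^{\beta-m}\ln x$, which is Hölder continuous only of orders strictly below $\beta - m$, and at $\beta = s$ this critical exponent is exactly $s - m$. Both the uniform $\mathcal{C}^{m,\alpha'}$ bound in (a) and the $\mathcal{C}^m$ convergence in (b) therefore hinge on elementary but careful estimates of $x^{\gamma}\ln x$ near the origin (for instance $|x^\gamma - x^{\gamma_0}| \le (\gamma-\gamma_0)\,x^{\gamma_0}\,|\ln x|$ for $x \le 1$ and $\gamma \ge \gamma_0$, together with the boundedness of $x^{\gamma_0}(\ln x)^2$ on $(0,R]$). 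The scheme works precisely because $\alpha$ is taken strictly below $s - m$, which leaves room to insert the intermediate exponent $\alpha'$.
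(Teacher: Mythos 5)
This lemma is quoted verbatim from \cite{WangZhang2019} and the paper gives no proof of it at all --- it only adds the remark that the convergence in (ii) holds in $\mathcal C^\infty_{\rm loc}(0,\infty)$ away from the origin --- so there is no in-paper argument to compare against, and your proposal must be judged on its own merits. On those merits it is correct and complete in all essentials. For (i), splitting at $x=1$ and using that the difference quotient $\si\mapsto(x^{2\si}-1)/\si$ of the convex function $\si\mapsto e^{2\si\ln x}$ is nondecreasing gives the inequality with $C_\eta=1/\eta$; this is clean and airtight. For (ii), the representation $F_\delta=\frac1\delta\int_0^\delta g_{s+\tau}\,\dd\tau$ with $g_\beta(x)=x^\beta\ln x$ correctly reduces everything to right-continuity of $\beta\mapsto g_\beta$ at $\beta=s$ in $\mathcal C^{m,\alpha}[0,R]$ (to make the reduction fully rigorous you should note that differentiation in $x$ passes under the $\tau$-integral --- legitimate since $g_{s+\tau}^{(k)}(x)=x^{s+\tau-k}\bigl(a_k\ln x+b_k\bigr)$ is jointly continuous on $[0,R]\times[0,\delta]$ because $s+\tau-k\ge s-m>0$ --- and that norms of averages are bounded by suprema of norms). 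Your interpolation scheme, combining a uniform $\mathcal C^{m,\alpha'}$ bound for $\alpha<\alpha'<s-m$ with $\mathcal C^m$ convergence via $[g]_\alpha\le C[g]_{\alpha'}^{\alpha/\alpha'}\|g\|_\infty^{1-\alpha/\alpha'}$, is exactly the right device, and you correctly identify the borderline phenomenon that makes the hypothesis $\alpha<s-m$ necessary: $x^{s-m}\ln x$ fails to be H\"older of the critical order $s-m$ at the origin, so the strict gap is what leaves room for $\alpha'$. The two supporting estimates you cite, $|x^\gamma-x^{\gamma_0}|\le(\gamma-\gamma_0)x^{\gamma_0}|\ln x|$ for $x\le1$, $\gamma\ge\gamma_0$, and the boundedness of $x^{\gamma_0}(\ln x)^2$ on $(0,R]$ for $\gamma_0>0$, do suffice for facts (a) and (b), with (a) following from the derivative bound $|(g_\beta^{(m)})'(x)|\lesssim x^{\beta-m-1-\epsilon}$ uniformly for $\beta\in[s,s+\delta_0]$. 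No gaps; only the routine verifications of (a), (b), and the norm-under-integral step would need to be written out in a final version.
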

We note that the second convergence actually holds in $\mathcal
C^\infty_{\rm loc}(0,\infty)$: for any $0<a<b<\infty$, the convergence
holds uniformly on $[a,b]$ and the same is true for all derivatives,
as can be checked directly. It follows from
\cite[Corollary~2.1]{WangZhang2019} that the ground states $u_\si$ are
uniformly bounded in $L^\infty(\R^d)$,
\begin{equation*}
  \|u_\si\|_{L^\infty} = u_\si(0)\le C, \quad \forall \si\in
  \(0,\frac{2}{d}\),
\end{equation*}
where the bound $\si<2/d$ is here just to fix ideas. With these tools
in hand, standard $L^p$ estimates for elliptic equations (see
e.g. \cite[Theorem~9.11~\& ~9.19]{GilbargTrudinger}) and a bootstrap argument
imply the convergence $u_\si\to u_0$ as $\si\to 0$, in $W^{2k,p}_{\rm
  loc}$ for every 
integer $k\ge 0$ and every $p\in (1,\infty)$, hence in $\mathcal
C^\infty_{\rm loc}$ by Sobolev embedding.

\subsection{Computations of $\alpha'(0)$ and $\mu_0$}
\label{sec:lin-si-0}

The case $\si=0$ may be viewed as a limiting case of
Theorem~\ref{th-dependence}. Recall that the Gausson $u_0$
is the unique positive, radially symmetric, solution in
$\mathcal{C}^2(0,\infty) \cap 
L^{\infty}(0,\infty)$ of the limiting equation  
\begin{equation}
\label{eq:limiting}
u''(r) + \frac{d-1}{r} u'(r) + (\ln u(r)^2) u(r) = 0.
\end{equation}
The associated linearized operator
$\mathcal{L}_0 : {\rm Dom}(\mathcal{L}_0) \subset L^2_r \to
L^2_r$ given by  
\begin{equation}
\label{eq:L0}
\mathcal{L}_0 = - \frac{\dd^2}{\dd r^2} - \frac{d-1}{r} \frac{\dd}{\dd
  r} - \ln
u_0^2 - 2= - \frac{\dd^2}{\dd r^2} - \frac{d-1}{r} \frac{\dd}{\dd r} +r^2-d-2
\end{equation} 
is the (shifted) quantum harmonic Schr\"{o}dinger operator with
\begin{equation*}
  {\rm Dom}(\mathcal{L}_0) = \Sigma^2:=\{f\in H^2(\R^d),\ x\mapsto
  |x|^2 f(x)\in L^2(\R^d)\}
\end{equation*}
in $L^2_r$. By taking the limit $\si \to 0^+$ in \eqref{eq:inhom} and denoting $\mu_0 := \lim_{\si \to 0^+} \chi_{\si}$, we obtain 
the uniquely defined solution of the limiting problem $\mu_0 = \mathcal{L}_0^{-1} h_0$ with the limiting function
\begin{equation*}
  h_0 =\frac{1}{2} (\ln u_0^2)^2 u_0 \in L^2_r.
\end{equation*}
Note that both $\mu_0$ and $h_0$ decays faster (super-exponentially) for $\sigma = 0$ compared to the case $\sigma > 0$. 
Since for any $R>0$, $u_0(x)\ge e^{\frac{d-R^2}{2}}>0$ on the ball $B(0,R)$  of
radius $R$ in $\R^d$, the uniform convergence of $u_\si$ toward $u_0$ implies that $u_\si$ is bounded away from $0$ on  $B(0,R)$ for $\si\le \si(R)$ sufficiently small. The ODE theory implies that the mapping $(0,\sigma_*) \ni \sigma \mapsto  u_{\sigma} \in H^2_r(0,R)$ is also $\mathcal{C}^1$ in the limit $\sigma \to 0^+$.

We can thus consider the dependence $\alpha(\sigma)$ and the solution $\chi_{\si}(r)$ in the
limit $\sigma \to 0$. It follows from \eqref{Gausson} that
$\alpha_0 = u_0(0) = e^{d/2}$. Writing $\mathcal{L}_0 \mu_0 =
h_0$ explicitly, we obtain  
\begin{equation}
\label{eq:explicit}
-\mu_0''(r) - \frac{d-1}{r} \mu_0'(r) - (d-2) \mu_0(r) + r^2 \mu_0(r) = 
\frac{1}{2} (d-r^2)^2 e^{\frac{d-r^2}{2}}.
\end{equation}
Substitution $\mu_0(r) = \frac{1}{2} e^{\frac{d-r^2}{2}}
\tilde{\mu}_0(r)$ converts \eqref{eq:explicit} to the form
\begin{equation*}
  -\tilde{\mu}_0''(r) - \frac{d-1}{r} \tilde{\mu}_0'(r) 
+ 2 r \tilde{\mu}_0'(r) - 2 \tilde{\mu}_0(r) = 
(d-r^2)^2, 
\end{equation*}
polynomial solutions of which are available explicitly:
\begin{equation*}
\tilde{\mu}_0(r) = \frac{1}{6} \left[ d (d-4) + 4 (1-d) r^2 + r^4 \right]. 
\end{equation*}
This yields the expression
\begin{equation}
	\label{expr-mu}
\mu_0(r) = \frac{1}{12} \left[ d (d-4) + 4 (1-d) r^2 +
	r^4 \right]  u_0(r),
\end{equation}
which vanishes at the roots of the polynomial
	\begin{equation} 
	\label{eq:root_r_0}
	d (d-4) + 4 (1-d) r^2 + r^4 = 0 \quad \Leftrightarrow \quad r^2 = 2(d-1)
	\pm \sqrt{3 d^2-4 d + 4}. 
	\end{equation}
There is only one positive root of $r$ for $d \leq 4$ and 
two positive roots for $d \ge 5$. 

Assuming for the moment that $\alpha'(0) = \mu_0(0)$, we get
\begin{equation} \label{eq:slope_at_0}
\alpha'(0) = \frac{d (d-4)}{12} \alpha_0,
\end{equation}
which is negative for $d \leq 3$, zero at $d = 4$, and positive for $d
\ge 5$.  The relation $\alpha'(0)=\mu_0(0)$ is a direct consequence of
the property $e_\si\to 0$ in $\mathcal C^0_{\rm loc}$, which is a
particular case of the error estimate from Theorem~\ref{theo:log},
proven below.

\subsection{Explicit computations for $d = 1$}
\label{sec:1D}
In the one-dimensional case, the ground state is known explicitly, and
elementary computations can be carried out:

\begin{proposition}\label{prop:1D}
  Let $d=1$, 
  \begin{equation*}
  u_\si(x) =(1+\sigma)^{1/(2\sigma)} \cosh \left(
     x \sqrt\si\right)^{-1/\si}
 \end{equation*}
be the ground state associated to \eqref{eq:u}, and
$u_0(x)=e^{(1-x^2)/2}$ be the one-dimensional Gausson. Consider the
corrector
\begin{equation*}
  \mu_0(x) = e^{(1-x^2)/2}\( -\frac{1}{4}+\frac{x^4}{12}\)
  =\frac{1}{12}(x^4-3)u_0(x),  
\end{equation*}
in agreement with \eqref{expr-mu} for $d=1$. Then
\begin{equation*}
  \|u_\si - u_0-\si\mu_0\|_{L^\infty(\R)}+ \|u_\si -
  u_0-\si\mu_0\|_{L^1(\R)}=\O(\si^2).
\end{equation*}
\end{proposition}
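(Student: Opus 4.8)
The proof is entirely explicit, so the plan is to Taylor expand the closed-form ground state in $\si$ and, crucially, to control the remainder \emph{uniformly in $x\in\R$}. Writing $u_\si=u_0\,e^{c_\si}$, a direct computation gives
\[
c_\si(x):=\ln u_\si(x)-\ln u_0(x)=a_\si-\tfrac1\si\,\ell\!\left(x\sqrt\si\right),\qquad a_\si:=\tfrac1{2\si}\ln(1+\si)-\tfrac12,\quad \ell(y):=\ln\cosh y-\tfrac{y^2}2.
\]
Since $a_\si=-\tfrac\si4+\O(\si^2)$ and $\ell(y)=-\tfrac{y^4}{12}+\O(y^6)$, the first-order part of $c_\si$ is $\si q(x)$ with $q(x)=\tfrac{x^4}{12}-\tfrac14=\tfrac1{12}(x^4-3)$, and $u_0\,\si q=\si\mu_0$ recovers exactly the stated corrector. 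Setting $g_\si:=u_\si-u_0-\si\mu_0=u_0\,\bigl(e^{c_\si}-1-\si q\bigr)$, the goal is to prove $\|g_\si\|_{L^\infty(\R)}+\|g_\si\|_{L^1(\R)}=\O(\si^2)$.

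I would split $\R$ into a central region $|x|\le\si^{-1/4}$ and a tail $|x|>\si^{-1/4}$. On the central region $y:=x\sqrt\si\le\si^{1/4}\le1$, so Taylor's theorem for $\ell$ gives $|\ell(y)+\tfrac{y^4}{12}|\le Cy^6$, whence, together with $|a_\si+\tfrac\si4|\le C\si^2$, the exponent error $E:=c_\si-\si q$ satisfies $|E|\le C\si^2(1+x^6)$; moreover $\si q$ and hence $c_\si$ stay bounded here because $\si x^4\le1$. Expanding $e^{c_\si}-1-\si q=(e^{\si q}-1-\si q)+e^{\si q}(e^{E}-1)$ and using $|e^t-1-t|\le\tfrac{t^2}2 e^{|t|}$ and $|e^t-1|\le |t|e^{|t|}$ on bounded arguments, one obtains the key pointwise estimate $|e^{c_\si}-1-\si q|\le C\si^2(1+x^8)$. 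Multiplying by $u_0(x)=e^{1/2}e^{-x^2/2}$, the polynomial factor is absorbed by the Gaussian: since $\sup_x(1+x^8)e^{-x^2/2}<\infty$ and $\int_\R(1+x^8)e^{-x^2/2}\,\dd x<\infty$, both $\|g_\si\|_{L^\infty}$ and $\|g_\si\|_{L^1}$ over the central region are $\O(\si^2)$.

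On the tail $|x|>\si^{-1/4}$, the plan is to show that each of $u_\si$, $u_0$, $\si\mu_0$ is super-exponentially small. For $u_0$ and $\mu_0$ this is just the Gaussian tail beyond $\si^{-1/4}$. For $u_\si$ I would use the bound $(1+\si)^{1/(2\si)}\le e^{1/2}$ recalled in the introduction together with $\ell(y)\ge-\tfrac{y^4}{12}$, which follows from $\tanh t\ge t-\tfrac{t^3}3$ (itself proved by differentiating, since $\tfrac{\dd}{\dd t}(\tanh t-t+\tfrac{t^3}3)=t^2-\tanh^2 t\ge0$). This yields $u_\si(x)\le e^{1/2}e^{-x^2/2+\si x^4/12}$, which for $\si^{-1/4}<|x|\le\si^{-1/2}$ is at most $e^{1/2}e^{-5x^2/12}$ (using $\si x^2\le1$), hence negligible in both norms. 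For $|x|>\si^{-1/2}$ I would instead use $\cosh y\ge\tfrac12 e^{|y|}$ to get $u_\si(x)\le e^{1/2}\,2^{1/\si}e^{-|x|/\sqrt\si}$; at the threshold this is $e^{1/2}e^{(\ln2-1)/\si}$, super-exponentially small because $\ln2<1$, and integrating the exponential controls the $L^1$ contribution. Combining the two regions gives the claim.

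The only genuine obstacle is the uniformity over the unbounded line: the naive expansion of $c_\si$, and indeed $\si q$ itself, grows without bound as $|x|\to\infty$, so no global Taylor estimate is available. The resolution is twofold and is the heart of the argument: in the central region the Gaussian weight $u_0$ absorbs every polynomial factor so that all $\O(\si^2\,x^k)$ remainders become honest $\O(\si^2)$ bounds, while in the far tail one must abandon the expansion and instead exploit that $u_\si$ decays (with the non-Gaussian rate $e^{-|x|/\sqrt\si}$), the verification $2^{1/\si}e^{-1/\si}=e^{(\ln2-1)/\si}\to0$ being the small but essential point.
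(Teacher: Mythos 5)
Your proposal is correct and follows essentially the same route as the paper: Taylor expansion of the explicit exponent $\ln\cosh(x\sqrt\si)$ (together with $(1+\si)^{1/(2\si)}=e^{1/2}(1-\si/4+\O(\si^2))$), a splitting of $\R$ into a central region where the Gaussian weight $u_0$ absorbs the polynomial remainders $\O(\si^2(1+x^8))$, and a far region where $u_\si$, $u_0$, $\si\mu_0$ are all shown to be super-polynomially small in both $L^\infty$ and $L^1$. Your only deviations are cosmetic refinements — cutting at $\si^{-1/4}$ and $\si^{-1/2}$ instead of the paper's single cut at $\si^{-1/2}$, and using the global bounds $\ln\cosh y\ge y^2/2-y^4/12$ and $\cosh y\ge\tfrac12 e^{|y|}$ in place of the paper's bounds $g_x(\si)\ge\ln\cosh(1)$ and the convexity estimate $f(y)\ge f(1)+(y-1)f'(1)$ — which if anything make the uniformity of the expansion near $|x|\sim\si^{-1/2}$ more transparent.
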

In particular, the relation $\alpha'(0)=\mu_0(0)$ follows for $d=1$. 
\begin{remark}
  By interpolation, we also
  have, for any $p\in [1,\infty]$, 
  \begin{equation*}
  \|u_\si - u_0-\si\mu_0\|_{L^p(\R)}=\O(\si^2).
\end{equation*}
Similar estimates for momenta, $\|\<x\>^k(u_\si -
u_0-\si\mu_0)\|_{L^p(\R)}$, where $k>0$, follow easily by the same
argument as below. Controlling Sobolev norms of the error would
require more work though; we leave out this aspect, which is somehow
anecdotal. 
\end{remark}
\begin{proof}
We note that
\begin{equation*}
  \si \mapsto \|u_\si\|_{L^\infty} = u_\si(0) = (1+\sigma)^{1/(2\si)} 
\end{equation*}
is (strictly) decreasing on $\R_+$ (as can be checked by elementary
computations). We readily compute
\begin{equation}
\label{expansion-alpha}
\alpha(\sigma) = (1+\sigma)^{1/(2\si)} = e^{\frac{1}{2\si}\ln(1+\si)}=
 e^{1/2}\( 1 -\frac{1}{4} \si + \O(\si^2)\), 
\end{equation}
in agreement with \eqref{eq:slope_at_0} for $d = 1$, and we focus on
the remaining part defining $u_\si$.
\smallbreak
  
For $x,\si\ge 0$, let
\begin{equation*}
  g_x(\si) : = \ln \cosh \(x\sqrt{\si}\).
\end{equation*}
We have
\begin{equation*}
  g_x(0)=0,\quad \text{and for }\si>0,\quad g_x'(\si) =
  \frac{x}{2\sqrt\si}\tanh\(x\sqrt\si\). 
\end{equation*}
Since we have the expansion
\begin{equation*}
  \tanh(y) = y-\frac{y^3}{3}+\O(y^5)\quad\text{for }0\le y\le
  \frac{\pi}{2}, 
\end{equation*}
we infer in particular
\begin{equation*}
 g_x(\si) = \si\frac{x^2}{2}-\si^2\frac{x^4}{12}+\O\(\si^3x^6\), \quad
 0\le x\le \frac{1}{\sqrt\si}.  
\end{equation*}
Therefore, we have
\begin{equation}
\label{expansion-u}
\tilde u_\si(x):=\cosh \left(
     x \sqrt\si\right)^{-1/\si} = \exp\( \frac{-1}{\si}g_x(\si)\) =
   e^{-x^2/2}e^{\si\frac{x^4}{12}+ R(\si,x)}, 
\end{equation}
where there exists $C$ such that for all $0\le x\le 1/\sqrt\si$,
\begin{equation*}
  |R(\si,x)|\le C\si^2 x^6,
\end{equation*}
hence
\begin{equation*}
  \tilde u_\si(x)= e^{-x^2/2}\(1 +
  \si\frac{x^4}{12}+\O\(\si^2(x^6+x^8)\)\),\quad 0\le x\le 1/\sqrt\si.
\end{equation*}
On the other hand, for $x>1/\sqrt\si$, 
\begin{equation*}
  g_x(\si)\ge \ln \cosh (1),\quad\text{hence}\quad \tilde u_\si(x) \le
  e^{-\frac{\ln \cosh (1)}{\si}}. 
\end{equation*}
Let $\tilde u_0(x) = e^{-x^2/2}$.
 We obviously have $\tilde u_0(x)=\O(e^{-1/(2\si)})$ for
 $x>1/\sqrt\si$, so by symmetry, we infer
\begin{equation*}
 \tilde u_\si(x) = \tilde u_0(x)\(1+
 \si\frac{x^4}{12}\)+\O\(\si^2\)\quad\text{in }L^\infty(\R),
\end{equation*}
which, together with  \eqref{expansion-alpha},  yields the
$L^\infty$-estimate.

For the $L^1$-estimate, let $\tilde v_0(x) = \frac{x^4}{12}\tilde
u_0(x)$, we consider
\begin{equation*}
  \|\tilde u_\si-\tilde u_0-\si\tilde v_0\|_{L^1(\R)}=2\int_0^\infty
  e^{-x^2/2}\left| e^{\si\frac{x^4}{12}+
      R(\si,x)}-1-\si\frac{x^4}{12}\right|\dd x. 
\end{equation*}
Again, we distinguish the regions $0<x\le 1/\sqrt\si$ and
$x>1/\sqrt\si$. From the above Taylor expansion, 
on the first region,
\begin{equation*}
  e^{\si\frac{x^4}{12}+R(\si,x)} -1 = \si\frac{x^4}{12} + R_1(\si,x),
\end{equation*}
where there exists $C_1$ such that 
\begin{equation*}
  |R_1(\si,x)|\le C_1\( \si^2 x^8+\frac{1}{\si} \(x\sqrt\si\)^6\)= C_1
  \si^2 (x^8+x^6),\quad 0\le x\le \frac{1}{\sqrt\si}. 
\end{equation*}
This yields
\begin{align*}
  \int_0^{1/\sqrt\si}
  e^{-x^2/2}\left| e^{\si\frac{x^4}{12}+
  R(\si,x)}-1-\si\frac{x^4}{12}\right|\dd x
  &\lesssim
    \si^2\int_0^\infty e^{-x^2/2}\(1+x^6+x^8\)\dd x \\
  &\lesssim \si^2. 
\end{align*}
We next show that the tail of the integral is
actually much smaller. Changing variables,
\begin{equation*}
\int_{1/\sqrt\si}^\infty \tilde u_\si(x)\dd x=
\int_{1/\sqrt\si}^\infty \frac{\dd x}{(\cosh  
    (x\sqrt\si))^{1/\si}} = \frac{1}{\sqrt\si}\int_{1}^\infty \frac{\dd y}{(\cosh 
    (y))^{1/\si}}. 
\end{equation*}
Taylor formula for
$f(y) = \ln \cosh y $ yields
\begin{equation*}
  f(y)
  =f(1)+(y-1)f'(1)+(y-1)^2\int_0^1(1-\theta)f''(\theta
  y )\dd \theta. 
\end{equation*}
As
\begin{equation*}
  f'(y) = \tanh(y),\quad f''(y)=\frac{1}{\cosh^2 y}\ge 0,
\end{equation*}
we infer
\begin{equation*}
  f(y) \ge f(1)+(y-1)f'(1),
\end{equation*}
hence
\begin{equation*}
  \int_{1}^\infty \frac{\dd y}{(\cosh 
    (y))^{1/\si}}\le    \int_{y_\si}^\infty e^{-\frac{1}{\si}\(
  f(1)+(y-1)f'(1) \)}\dd y=\frac{\si}{f'(1)}
e^{-\frac{1}{\si}  f(1)}, 
\end{equation*}
which is $\O(\si^k)$ for all $k>0$.
Recalling the
asymptotic formula
\begin{equation*}
  \int_M^\infty e^{-x^2/2}\dd x\Eq M \infty \frac{1}{M} e^{-M^2/2}, 
\end{equation*}
we also have
\begin{equation*}
   \int_{1/\sqrt\si}^\infty \(\tilde u_0(x)+\si\tilde
   v_0(x)\)\dd x = \O(\si^k)\quad \text{for all }k>0,
 \end{equation*}
 hence the $L^1$-estimate of the proposition. 
\end{proof}

\subsection{Asymptotic expansions for general $d \geq 1$}

To complete the proof of Theorem~\ref{theo:log},
we describe $u_\si$ up to some $o(\si)$ in $H^s_r$ for $0 \leq s <
1$. The convergence in $\mathcal C^\infty_{\rm loc}$ follows from
rather classical arguments. 

\subsubsection{Derivation}
\label{sec:derivation}

For $z,\si>0$, we denote the nonlinearity in \eqref{eq:u} by
\begin{equation*}
  f(z,\si)=\(z^{2\si}-1\)z,
\end{equation*}
where we note that $f(z,0)=0$. We write an asymptotic expansion 
for $u_\si$ for small $\si > 0$ as
\begin{equation}
\label{eq:asymptotic}
  u_\si = u_0+\si \mu_0+\si e_\si=u_0+\si v_\si,
\end{equation}
where $u_0$ is the Gausson \eqref{Gausson} satisfying \eqref{eq:limiting}, 
$\mu_0$ is the first-order correction \eqref{expr-mu} satisfying \eqref{eq:explicit}, and the remainder term $e_\si$
is expected to vanish as $\si \to 0$ to ensure that $v_\si\to
\mu_0$. Plugging this expression into \eqref{eq:u}, and 
using \eqref{eq:limiting}, we obtain 
\begin{align*}
\si\(v_\si''+\frac{d-1}{r}v_\si'\)
  = -\frac{1}{\si}f\( u_0+\si v_\si,\si\)
  +u_0\ln u_0^2.  
\end{align*}
Consider the decomposition
\begin{equation*}
  f\( u_0+\si v_\si,\si\)= f\( u_0+\si v_\si,\si\)- f(u_0,\si)+f(u_0,\si).
\end{equation*}
Taylor formula yields, since $f(z,0)=0$,
\begin{equation*}
  f(u_0,\si) = \si\d_\si f(u_0,0) + \frac{\si^2}{2}\d^2_{\si\si} f(u_0,0) +
  \frac{\si^3}{2}\int_0^1(1-\theta)^2 \d^3_{\si\si\si}
  f(u_0,\theta\si)\dd\theta.   
\end{equation*}
We readily compute
\begin{equation*}
  \d_\si f(z,0) = z\ln z^2,\quad \d^2_{\si\si} f(z,0)= z\(\ln
  z^2\)^2, \quad\d^3_{\si\si\si} f(z,\si)= z^{2\si+1}\(\ln  z^2\)^3,
\end{equation*}
so
\begin{equation*}
   \frac{1}{\si^2}\(f\( u_0,\si\) -\si u_0\ln u_0^2\)\Big|_{\si=0}=
   \frac{1}{2}u_0 \(\ln
  u_0^2\)^2 =h_0. 
\end{equation*}
Next, we write
\begin{align*}
  f\( u_0+\si v_\si,\si\)-f(u_0,\si)
  &= \si v_\si
    \int_0^1 \d_z f\( u_0 +\theta \si v_\si,\si\)\dd \theta\\
  & = 2\si^2 v_\si  \int_0^1  \(u_0 +\theta \si
    v_\si\)^{2\si\theta}\dd \theta\\
&\quad    +\si v_\si \int_0^1  \(\(u_0 +\theta \si
    v_\si\)^{2\si\theta}-1\)\dd \theta.
\end{align*}
Assuming $v_\si\to \mu_0$ as $\si\to 0$, we get
\begin{equation*}
\frac{1}{\si^2}\(f\( u_0+\si v_\si,\si\)-f(u_0,\si)
\)\Tend \si 0   2\mu_0+\mu_0\ln u_0^2 = (d+2-r^2)\mu_0.
\end{equation*}
Reordering terms, we expect $\mu_0$ to solve $\mathcal L_0
\mu_0=h_0$, hence to be given explicitly by \eqref{expr-mu}.
The correction term $v_\si$ solves
\begin{align*}
  v_\si''+\frac{d-1}{r}v_\si' =
  & -\frac{1}{2}u_0\(\ln u_0^2\)^2 - \frac{\si}{2} \left(\ln u_0^2
    \right)^3 u_0 \int_0^1 (1-\theta)^2 u_0^{2\theta \si} \dd \theta\\
&\quad +2 v_\si  \int_0^1  \(u_0 +\theta \si
    v_\si\)^{2\si\theta}\dd \theta\\
&\quad    + v_\si \int_0^1  \frac{\(u_0 +\theta \si
    v_\si\)^{2\si\theta}-1}{\si}\dd \theta.
 \end{align*}
Recalling that $\si v_\si = u_\si-u_0$, if we denote the potential
\begin{equation}  \label{eq:V_si}
  \begin{aligned}
    V_\si(r)
    &:= - \int_0^1 \frac{\left((1-\theta)u_0(r) + \theta u_\si(r)
      \right)^{2\si} -1}{\si} \dd \theta \\
    &\quad-2  \int_0^1 \left( (1-\theta)u_0(r) + \theta u_\si(r) \right)^{2\si} \dd \theta,
  \end{aligned}
\end{equation}   
associated to the Schrödinger operator
\[ \widetilde{\mathcal{L}}_\si := - \frac{\dd^2}{\dd r^2} - \frac{d-1}{r} \frac{\dd}{\dd r} + V_\si, \]
then the equation on $v_\si$ writes
\begin{equation} \label{eq:v_si}
  \widetilde{\mathcal{L}}_\si v_\si =
   \frac{1}{2}u_0\(\ln u_0^2\)^2+ \frac{\si}{2} \left(\ln u_0^2
  \right)^3 u_0 \int_0^1 (1-\theta)^2 u_0^{2\theta \si}  \dd \theta =: h_\si.
\end{equation}

As we want to show that $e_\si$ vanishes as $\si \to 0$, we need to
invert the Schrödinger operator $\widetilde{\mathcal{L}}_\si$,
considering the right hand side of \eqref{eq:v_si} as a source
term. Unfortunately, such operator could have a zero eigenvalue. However, 
we prove that in the limit $\si \to 0$,
$\widetilde{\mathcal{L}}_\si$ is close in some sense to the shifted
harmonic oscillator $\mathcal{L}_0$, ruling out the aforementioned
scenario.

On a formal level,  not only the error term $e_\si$ in \eqref{eq:asymptotic} is expected to vanish as $\si\to 0$, but also it is likely to satisfy $e_\si=\O(\si)$. However, as can be observed in the case of $\mu_0$, every time a new term is derived in the asymptotic expansion in $\si$ of $u_\si$, it turns out
to be $u_0$ multiplied by a polynomial whose degree increases at every
step. This makes it delicate to prove a quantitative error bound, even
to show that $e_\si=\O(\si)$ in $L^2(\R^d)$ for $d \geq 2$. Also, to prove
$e_\si=\O(\si)$, we would have to expand $V_\si$ in powers of $\si$,
which would involve $u_0 \ln\(  (1-\theta)u_0 + \theta
u_\si\)$. Controlling this term in $L^2_r$ essentially requires to
know some uniform bound from below for $u_\si$, which we could not
derive. Therefore, we rely on the study of invertibility of the Schr\"{o}dinger operator $\widetilde{\mathcal{L}}_\si$.

\subsubsection{Spectrum of the radial shifted harmonic oscillator} 
Recall that the harmonic oscillator $H=-\Delta + |x|^2$ on $\R^d$ has its eigenvalues $(\Omega_n)_{n \in \N}$ and eigenfunctions $(f_n)_{n \in \N}$ satisfying
\[ \left| \begin{aligned}
& \Omega_n=(\omega_{n_1}+\ldots+\omega_{n_d}),  \\
& f_n = \psi_{n_1} \ldots \psi_{n_d}, \\
& n_1+\ldots+n_d=n,
\end{aligned} \right. \]
where $\omega_k=2k+1$ and $\psi_k$ denotes the $k$-th Hermite
function. Note that $(\omega_k)_{k \in \N}$ and $(\psi_k)_{k\in\N}$
are respectively eigenvalues and eigenfunctions of the one-dimensional
harmonic oscillator. For $k$ even (resp. odd), $\psi_k$ is even
(resp. odd).

Restricted on radial functions, the operator
\[ H_{\mathrm{rad}} = -\frac{ \dd^2 }{\dd r^2} - \frac{d-1}{r} \frac{\dd }{\dd r} + r^2  \]
then admits  eigenfunctions $(g_n)_{n\in\N}$ such that
\[ g_n=\psi_{n_1}\ldots \psi_{n_d}, \quad n_1=n_2=\ldots=n_d\text{ even},  \]
with sorted eigenvalues $\Lambda_n=\omega_{n_1}+\ldots+\omega_{n_d}$. In particular for $n=0$, $g_0$ denotes a radial Gaussian function associated to first eigenvalues $\Lambda_0=d$, while the second eigenfunction $\Lambda_1=5d$ corresponds to $n_1=\ldots=n_d=2$.

Recalling that radial shifted harmonic oscillator writes
$\mathcal{L}_0= H_{\mathrm{rad}}-d-2$ from \eqref{eq:L0}, and denoting
by $(\lambda_0^{(k)})_{k\in \N}$ and $(\varphi_0^{(k)})_{k\in\N}$ its sorted
eigenvalues and eigenfunctions, we thus infer that $\lambda_0^{(0)}=-2$ and
that $\lambda_0^{(1)}=4d-2$, so that $\lambda_0^{(k)}\ge 2$ for all $k\ge 1$ and all $d \geq 1$.

\subsubsection{Properties of the Schr\"{o}dinger operator $\widetilde{\mathcal{L}}_\si$}

\begin{lemma}
 Let $\si>0$. The potential $V_\si$, defined in \eqref{eq:V_si}, is
 radially symmetric, non-decreasing, and
 \begin{equation*}
   \lim_{r\to \infty}V_\si(r) =\frac{1}{\si}. 
 \end{equation*}
 Moreover, there exists $K>0$ such that for all $\si\in (0,2/d]$, $V_\si\ge
 -K$. As a consequence, $\widetilde{\mathcal{L}}_\si$ is a
 self-adjoint accretive operator such that
 $\si_c(\widetilde{\mathcal{L}}_\si)=[1/\si,\infty)$ and
 $\si_p(\widetilde{\mathcal{L}}_\si) \subset[-K,1/\si]$. 
\end{lemma}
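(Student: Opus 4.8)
The plan is to establish the two structural properties of $V_\si$ (monotonicity and the limit at infinity) first, then extract the uniform lower bound, and finally deduce the spectral claims by perturbation theory. Throughout I would set $w_\theta(r):=(1-\theta)u_0(r)+\theta u_\si(r)$ and introduce the scalar function $g(z):=-\frac{z^{2\si}-1}{\si}-2z^{2\si}$, so that $V_\si(r)=\int_0^1 g(w_\theta(r))\,\dd\theta$. Radial symmetry is then immediate, since $u_0$ and $u_\si$ are radial. For monotonicity I would compute $g'(z)=-2(1+2\si)z^{2\si-1}<0$ for $z>0$, so $g$ is strictly decreasing; as $u_0$ and $u_\si$ are positive and non-increasing in $r$, so is each $w_\theta$, hence $r\mapsto g(w_\theta(r))$ is non-decreasing, and integrating in $\theta$ shows $V_\si$ is non-decreasing. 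For the limit, the exponential decay of $u_0$ and $u_\si$ forces $w_\theta(r)\to 0$ uniformly in $\theta\in[0,1]$ as $r\to\infty$; since $0\le w_\theta^{2\si}\le\max(u_0,u_\si)^{2\si}\to0$, I can pass to the limit under the $\theta$-integral, giving $V_\si(r)\to-\int_0^1(-1/\si)\,\dd\theta=1/\si$.

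Because $V_\si$ is non-decreasing, its infimum over $r\ge0$ is attained at $r=0$, so the lower bound reduces to estimating $V_\si(0)=\int_0^1 g(w_\theta(0))\,\dd\theta$ from below, uniformly in $\si\in(0,2/d]$. Here $w_\theta(0)=(1-\theta)\alpha_0+\theta\,\alpha(\si)$ lies in a fixed bounded interval $[0,C]$, using $\alpha_0=e^{d/2}$ and the $\si$-uniform bound $\alpha(\si)=\|u_\si\|_{L^\infty}\le C$ recalled in Section~\ref{sec:u_si-u_0}. The term $2\,w_\theta(0)^{2\si}\le 2C^{4/d}$ is then uniformly bounded, and for the remaining term I would invoke the inequality $\frac{x^{2\si}-1}{\si}\le C_\eta x^{2\eta}$ from Section~\ref{sec:u_si-u_0} with some fixed $\eta>2/d$, yielding $\frac{w_\theta(0)^{2\si}-1}{\si}\le C_\eta w_\theta(0)^{2\eta}\le C_\eta C^{2\eta}$. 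Combining gives $V_\si(0)\ge -K$ with $K$ independent of $\si$, hence $V_\si\ge -K$ everywhere.

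Finally I would read off the spectral picture. Since $V_\si$ is continuous, non-decreasing, and satisfies $-K\le V_\si(r)<1/\si$, it is a \emph{bounded} real potential, so $\widetilde{\mathcal L}_\si$ is a bounded symmetric perturbation of the self-adjoint radial Laplacian $-\frac{\dd^2}{\dd r^2}-\frac{d-1}{r}\frac{\dd}{\dd r}$ on $H^2_r$ and is therefore self-adjoint. Moreover $\langle\widetilde{\mathcal L}_\si u,u\rangle=\|u'\|_{L^2_r}^2+\langle V_\si u,u\rangle\ge -K\|u\|_{L^2_r}^2$, so $\widetilde{\mathcal L}_\si$ is bounded below by $-K$ (equivalently, $\widetilde{\mathcal L}_\si+K$ is accretive), which already confines the spectrum, and hence $\si_p$, to $[-K,\infty)$. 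Writing $\widetilde{\mathcal L}_\si=\big(-\Delta_{\mathrm{rad}}+\tfrac1\si\big)+\big(V_\si-\tfrac1\si\big)$, the potential $V_\si-1/\si$ is bounded and tends to $0$ at infinity, hence a relatively compact perturbation of $-\Delta_{\mathrm{rad}}$; Weyl's theorem then gives $\si_{\mathrm{ess}}(\widetilde{\mathcal L}_\si)=\si_{\mathrm{ess}}(-\Delta_{\mathrm{rad}}+1/\si)=[1/\si,\infty)$, which is purely continuous, exactly as in Section~\ref{sec:prop-lin}. To upgrade $\si_p\subset[-K,\infty)$ to $\si_p\subset[-K,1/\si]$ I would exclude eigenvalues $\lambda>1/\si$ by the standard asymptotic ODE argument: for such $\lambda$ one has $\lambda-V_\si(r)\to\lambda-1/\si>0$, so every solution of the radial equation oscillates like $r^{-(d-1)/2}$ at infinity and fails to belong to $L^2_r$, ruling out an eigenfunction.

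The step I expect to be the main obstacle is the \emph{uniform} lower bound $V_\si\ge -K$: everything there hinges on the $\si$-independent $L^\infty$ bound on $u_\si$, keeping $w_\theta(0)$ in a fixed compact set, together with the uniform control of $\frac{w^{2\si}-1}{\si}$ for small $\si$, for which the inequality recalled in Section~\ref{sec:u_si-u_0} is precisely tailored. The spectral consequences are then routine, the only mild subtleties being the justification that $V_\si-1/\si$ is a relatively compact perturbation of $-\Delta_{\mathrm{rad}}$ and the exclusion of embedded eigenvalues above $1/\si$.
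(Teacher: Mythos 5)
Your proof is correct. For the only step the paper writes out in detail --- the uniform lower bound $V_\si\ge-K$ --- your argument is essentially the paper's in lightly different clothing, with two cosmetic differences: the paper bounds $(1-\theta)u_0+\theta u_\si\le C_\infty$ pointwise in $r$, so the reduction to $r=0$ via monotonicity of $V_\si$, while harmless, is superfluous (the same estimate works at every $r$); and it gets uniformity in $\si\in(0,2/d]$ by noting that $\frac{1-C_\infty^{2\si}}{\si}-2C_\infty^{2\si}$ is continuous in $\si$ with the finite limit $-\ln C_\infty^2-2$ as $\si\to0$, where you instead invoke the inequality $\frac{x^{2\si}-1}{\si}\le C_\eta x^{2\eta}$ recalled in Section~\ref{sec:u_si-u_0} with a fixed $\eta>2/d$. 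Both routes hinge on the same input, namely the $\si$-uniform $L^\infty$ bound on $u_\si$ coming from $u_\si\to u_0$, so they are interchangeable. Everything else in the lemma the paper dispatches with ``the rest follows easily'', and your write-up supplies exactly those routine details correctly: radial symmetry; monotonicity from $g'(z)=-2(1+2\si)z^{2\si-1}<0$ combined with the monotone decay of $u_0$ and $u_\si$; the limit $1/\si$ at infinity by passing to the limit under the $\theta$-integral; self-adjointness by Kato--Rellich since $-K\le V_\si\le 1/\si$ makes $V_\si$ a bounded symmetric perturbation; $\si_{\mathrm{ess}}(\widetilde{\mathcal{L}}_\si)=[1/\si,\infty)$ by Weyl, since $V_\si-1/\si$ is bounded and vanishes at infinity; and exclusion of eigenvalues above $1/\si$.

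Two caveats, neither fatal. First, in the embedded-eigenvalue step, the claim that $\lambda-V_\si(r)\to\lambda-1/\si>0$ forces all solutions to oscillate like $r^{-(d-1)/2}$ needs slightly more than stated: monotone convergence of $V_\si$ to $1/\si$ does not make $V_\si-1/\si$ integrable, so naive WKB is insufficient, and you should appeal to asymptotic integration or subordinacy results for potentials of bounded variation --- which your monotonicity statement supplies for free. Second, your reading of ``accretive'' as ``bounded below by $-K$, i.e. $\widetilde{\mathcal{L}}_\si+K$ accretive'' is the correct (indeed charitable) interpretation: literal accretivity of $\widetilde{\mathcal{L}}_\si$ would be inconsistent with the negative eigenvalue near $\lambda_0^{(0)}=-2$ that the paper's own subsequent analysis (Lemma~\ref{lem4}, Proposition~\ref{prop:spectre}) exhibits for small $\si$, and with the lemma's own allowance $\si_p\subset[-K,1/\si]$.
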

In the above statement, the upper bound $\si\le 2/d$ is arbitrary, to avoid
to distinguish the case $d\le 2$ (where $\si$ has no upper bound otherwise) from
the general case. 
\begin{proof}
  As recalled in the beginning of Section~\ref{sec:log}, we know that
  $u_\si\to u_0$ in $L^\infty(\R^d)$, so there exists $C_\infty$ such
  that
  \begin{equation*}
    \|u_\si\|_{L^\infty}\le C_\infty,\quad \forall \si\in [0,2/d].
  \end{equation*}
  We infer
  \begin{equation*}
    \((1-\theta)u_0+\theta u_\si\)^{2\si}\le C_\infty^{2\si},\quad
    \forall \theta\in [0,1],
  \end{equation*}
  so
  \begin{equation*}
    V_\si\ge \frac{1-C_\infty^{2\si}}{\si}-2C_\infty^{2\si}\Tend \si 0
    -\ln C_\infty^2-2,
  \end{equation*}
  hence $V_\si\ge -K$ for some uniform $K>0$. The rest of the lemma
  follows easily. 
\end{proof}
Thus
there exists a set of sorted eigenvalues $\lambda_\si^{(0)} \le
\lambda_\si^{(1)} \le \ldots$, $(\lambda_\si^{(j)})_{j\in J}$, and
eigenvectors $(\varphi_\si^{(j)})_{j\in J}$, $J \subset \N$, such that 
\begin{equation} \label{eq:eigen_L_si}
(\widetilde{\mathcal{L}}_\si+ K) \varphi_\si^{(j)} = (\lambda_\si^{(j)} + K) \varphi_\si^{(j)},
\end{equation} 
with $\lambda_\si^{(j)} \in [-K,1/\si]$, and $\| \varphi_\si^{(j)}
\|_{L^2_r}=1$ for all $j \in J$.
\smallbreak

Our goal now is to prove that for
$\si>0$ sufficiently small, the point spectrum of
$\widetilde{\mathcal{L}}_\si$ is uniformly away from zero. We shall
argue by comparison with the limiting case  of the
shifted harmonic operator   $\mathcal L_0$, which will be made possible
thanks to compactness properties. 

\begin{lemma}\label{lem2}
  For all $\eps>0$, there exist $\si_0>0$ and $R>0$ such that if
  $0<\si\le\si_0$ and $r\ge R$,
  \begin{equation*}
    V_\si(r)+K\ge \frac{1}{\eps}. 
  \end{equation*}
\end{lemma}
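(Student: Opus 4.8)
The plan is to exploit the two facts established just above: first, $V_\si$ is non-decreasing in $r$, so it suffices to bound $V_\si$ from below at a single radius $R$ and let monotonicity propagate the inequality to all $r\ge R$; and second, $u_\si\to u_0$ in $L^\infty(\R^d)$ with $u_0(r)=e^{(d-r^2)/2}$, so that both $u_0(r)$ and $u_\si(r)$ become small for $r$ large, which forces the integrand in \eqref{eq:V_si} to be close to $1/\si$.

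First I would reduce to a single radius and a pointwise estimate. Write $a_R:=u_0(R)=e^{(d-R^2)/2}$. By the uniform convergence $u_\si\to u_0$, for each fixed $R$ there is $\si_1(R)>0$ such that $u_\si(R)\le 2a_R$ whenever $\si\le\si_1(R)$; taking $R$ large enough that $2a_R<1$, the quantity $m:=\max(a_R,u_\si(R))$ then satisfies $a_R\le m\le 2a_R<1$. For any $\theta\in[0,1]$ the convex combination $(1-\theta)u_0(R)+\theta u_\si(R)$ lies in $[0,m]$, and since $t\mapsto t^{2\si}$ is increasing there with $m<1$, we get $\bigl((1-\theta)u_0(R)+\theta u_\si(R)\bigr)^{2\si}\le m^{2\si}\le 1$. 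Feeding this into \eqref{eq:V_si} gives
\[ V_\si(R)\ge \frac{1-m^{2\si}}{\si}-2m^{2\si}\ge \frac{1-m^{2\si}}{\si}-2. \]

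Next I would quantify the first term. Writing $m^{2\si}=e^{-x}$ with $x:=-2\si\ln m\ge0$ and using $1-e^{-x}\ge x-\tfrac{x^2}{2}$ yields $\frac{1-m^{2\si}}{\si}\ge 2(-\ln m)-2\si(\ln m)^2$. Since $a_R\le m\le 2a_R$, we have $-\ln m\ge-\ln(2a_R)=\tfrac{R^2-d}{2}-\ln2$ and $(\ln m)^2\le\bigl(\tfrac{R^2-d}{2}\bigr)^2$, whence
\[ V_\si(R)\ge (R^2-d)-2\ln2-2-2\si\Bigl(\tfrac{R^2-d}{2}\Bigr)^2. \]

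Finally I would fix the quantifiers in the correct order, which is the only delicate point. Given $\eps>0$, I first choose $R$ so large that $2a_R<1$ and $(R^2-d)-2\ln2-2\ge 1/\eps-K+1$; this makes the leading term dominate. Then, $R$ being fixed, I choose $\si_0\le\si_1(R)$ small enough that $2\si_0\bigl(\tfrac{R^2-d}{2}\bigr)^2\le 1$. For $0<\si\le\si_0$ this gives $V_\si(R)\ge 1/\eps-K$, i.e.\ $V_\si(R)+K\ge1/\eps$, and monotonicity of $V_\si$ in $r$ then yields $V_\si(r)+K\ge1/\eps$ for all $r\ge R$. The main obstacle is precisely this ordering: the leading term $\sim R^2$ must be made large by fixing $R$ first, whereas both the remainder $2\si(\cdot)^2$ and the uniform-convergence control $u_\si(R)\le 2a_R$ force $\si$ to be taken small only afterwards, depending on $R$.
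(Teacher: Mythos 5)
Your proof is correct and follows essentially the same route as the paper's: for $r\ge R$ the convex combination $(1-\theta)u_0+\theta u_\sigma$ is uniformly small (by the $L^\infty$ convergence $u_\sigma\to u_0$ together with the decay of $u_0$), so the term $\frac{1-z^{2\sigma}}{\sigma}$ in $V_\sigma$ dominates and blows up logarithmically in the smallness parameter, while the remaining term contributes only an $O(1)$ amount. The only differences are in execution: the paper argues softly, tuning a threshold $\delta$ to $\varepsilon$ via $\varepsilon^{-1}=K-\frac{1}{2}\ln\delta^2$ and using the limit $\frac{1-\delta^{2\sigma}}{\sigma}\to-\ln\delta^2$ as $\sigma\to 0$, whereas you make the bound quantitative through $1-e^{-x}\ge x-\frac{x^2}{2}$ and the explicit Gaussian decay of $u_0$ (yielding an explicit lower bound of order $R^2$), and your appeal to the monotonicity of $V_\sigma$ is a harmless shortcut, since the same pointwise estimate holds directly at every $r\ge R$ because $u_0$ and $u_\sigma$ are decreasing.
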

\begin{proof}
  Let $\delta>0$ to be determined later. For any $z\le \delta$,
  $\frac{1-z^{2\si}}{\si}\ge \frac{1-\delta^{2\si}}{\si}$.

   On the other hand, for any $r\ge 0$ and $\theta\in [0,1]$,
   \begin{equation*}
    0< (1-\theta)u_0(r) + \theta u_\si(r) = u_0(r)
    +\theta\(u_\si(r)-u_0(r)\)\le u_0(r) + \|u_\si-u_0\|_{L^\infty}. 
  \end{equation*}
  Let $R>0$ such that for all $r\ge R$, $u_0(r)<\delta/2$, and let
  $\si_0>0$ such that for all $\si\le\si_0$,
  $\|u_\si-u_0\|_{L^\infty}<\delta/2$. For $r\ge R$ and $\si\le\si_0$,
  \begin{equation*}
    \int_0^1 \frac{1-\((1-\theta)u_0(r)+\theta u_\si(r)\)^{2\si}}{\si}\dd
    \theta \ge \frac{1-\delta^{2\si}}{\si}.
  \end{equation*}
  To control the other term defining $V_\si$, note that for $\si\le
  \si_0$ and $r\ge 0$,
  \begin{equation*}
    \int_0^1 \((1-\theta)u_0(r)+\theta u_\si(r)\)^{2\si}\le
    \(\|u_0\|_{L^\infty} +\frac{\delta}{2}\)^{2\si},
  \end{equation*}
  so we come up with
  \begin{equation*}
    V_\si(r)+ K \ge \frac{1-\delta^{2\si}}{\si}- 2\(\|u_0\|_{L^\infty}
    +\frac{\delta}{2}\)^{2\si}+K ,\quad \forall r\ge R,\ \forall
    \si\le \si_0. 
  \end{equation*}
  The right hand side goes to $K-\ln \delta^2$ as $\si$ goes to
  zero. Up to decreasing $\si_0>0$, we have
   \begin{equation*}
    V_\si(r)+ K \ge K-\frac{1}{2}\ln\delta^2 ,\quad \forall r\ge R,\ \forall
    \si\le \si_0,
  \end{equation*}
  and we conclude by picking $\delta>0$ such that
  $\eps^{-1}=K-\frac{1}{2}\ln\delta^2$. 
\end{proof}
We infer that weighted $L^2_r$ estimates involving $V_\si$ provide
compactness in $L^2_r$ of bounded family of $H^1_r$ functions:
\begin{lemma}\label{lem3}
  Let $(g_\si)_{\si>0}$ be a family in $H^1_r$ such that there
  exist $\si_1>0$ and $C>0$ with
  \begin{equation*}
    \|g_\si\|_{H^1_r}^2 +
    \int_0^\infty\(V_\si(r)+K\)g_\si(r)^2r^{d-1}\dd r \le C,\quad
    \forall \si\in (0,\si_1).
  \end{equation*}
  Then the family $(g_\si)_{\si>0}$ is relatively compact in $L^2_r$
  as $\si\to 0$. 
\end{lemma}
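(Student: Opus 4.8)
The plan is to derive relative compactness in $L^2_r$ from two ingredients: local compactness coming from the uniform $H^1_r$ bound (Rellich--Kondrachov), and uniform tightness at infinity coming from the coercivity of $V_\si$ established in Lemma~\ref{lem2}. Since the family is indexed by the continuous parameter $\si$, it suffices to show that every sequence $\si_n\to 0$ has a subsequence along which $g_{\si_n}$ converges in $L^2_r$.

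First I would observe that the hypothesis already yields a uniform $H^1_r$ bound: by the lemma establishing $V_\si\ge -K$, the weighted term is nonnegative, so $\|g_\si\|_{H^1_r}^2\le C$ for all $\si\in(0,\si_1)$. Fix a sequence $\si_n\to 0$ and write $g_n:=g_{\si_n}$. On each ball $B(0,R)\subset\R^d$ the radial functions $g_n$ are bounded in $H^1(B(0,R))$, and the embedding $H^1(B(0,R))\hookrightarrow L^2(B(0,R))$ is compact. Extracting successively on $R=1,2,\dots$ and passing to a diagonal subsequence (not relabeled), I obtain $g\in L^2_{\mathrm{loc}}$ with $g_n\to g$ in $L^2\bigl((0,R),r^{d-1}\,\dd r\bigr)$ for every $R>0$; up to a further extraction $g_n\rightharpoonup g$ weakly in $H^1_r$, and weak lower semicontinuity of the norm gives $g\in H^1_r$, in particular $g\in L^2_r$.

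The core of the argument is tightness. Given $\eta>0$, I apply Lemma~\ref{lem2} with $\eps=\eta/C$ to obtain $R>0$ and $\si_0>0$ such that $V_\si(r)+K\ge 1/\eps$ for all $r\ge R$ and $\si\le\si_0$. Then, for $n$ large enough that $\si_n\le\si_0$,
\[
\int_R^\infty g_n(r)^2\,r^{d-1}\,\dd r\le \eps\int_R^\infty\bigl(V_{\si_n}(r)+K\bigr)g_n(r)^2\,r^{d-1}\,\dd r\le \eps C=\eta.
\]
Fatou's lemma transfers the same tail bound to $g$. Splitting $\|g_n-g\|_{L^2_r}^2$ into the contributions on $(0,R)$ and on $(R,\infty)$, the first tends to $0$ by local convergence, while the second is bounded by $4\eta$ uniformly for large $n$. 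Since $\eta>0$ is arbitrary, $g_n\to g$ in $L^2_r$, which establishes the relative compactness.

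I expect the main obstacle to be precisely the control of the tails at infinity — ensuring that no $L^2$ mass escapes to $r=\infty$ as $\si\to 0$ — which is exactly what Lemma~\ref{lem2} resolves through the uniform divergence $V_\si+K\to\infty$ for small $\si$. The behavior near $r=0$ dictated by the weight $r^{d-1}$ poses no difficulty, since radial $H^1_r$ functions extend to genuine $H^1$ functions on balls, where Rellich--Kondrachov applies directly.
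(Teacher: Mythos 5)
Your proof is correct and follows essentially the same route as the paper: the decisive step --- the uniform tail estimate $\int_R^\infty g_\si^2 r^{d-1}\,\dd r \le \eps C$ obtained from the coercivity of $V_\si + K$ in Lemma~\ref{lem2} --- is exactly the equitightness property the paper proves. The only difference is presentational: the paper then invokes the Fr\'echet--Kolmogorov theorem for radial functions, whereas you unpack that criterion by hand, combining Rellich--Kondrachov on balls with a diagonal extraction and the tail bound (transferred to the limit via Fatou), which is just an explicit version of the same compactness argument.
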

\begin{proof}
  We show that the Fr\'echet-Kolmogorov Theorem for radially symmetric
  functions can be applied, by proving the equitightness property,
  \begin{equation*}
    \lim_{R\to \infty}\limsup_{\si\to 0} \int_R^\infty
    g_\si(r)^2r^{d-1}\dd r =0. 
  \end{equation*}
  Let $\eps>0$, and consider $R,\si_0$ provided by Lemma~\ref{lem2}:
  for $\si\le \si_0$,
  \begin{equation*}
    \int_R^\infty   g_\si(r)^2r^{d-1}\dd r = \int_R^\infty
    \frac{V_\si(r)+K}{V_\si(r)+K}  g_\si(r)^2r^{d-1}\dd r \le C\eps,
  \end{equation*}
  hence the lemma. 
\end{proof}

\begin{lemma}\label{lem4}
 Let $(\varphi_\si)_{\si>0}$ be a sequence of eigenfunctions of
 $\widetilde{\mathcal{L}}_\si$, normalized in $L^2_r$, such that the
 related eigenvalue $\lambda_\si$ satisfies $-4\le \lambda_\si\le
 4$. Then there exists a subsequence $\si_n\to 0$ as $n\to \infty$
 such that $\varphi_{\si_n}\to \varphi_0$ in $L^2_r$ and
 $\lambda_{\si_n}\to \lambda$ for some $\varphi_0\in H^1_r$
 and $\lambda\in [-4,4]$. Moreover, $\varphi_0$ is an eigenfunction of
 $\mathcal L_0=\Delta_r+r^2-d-2$, normalized in $L^2_r$, with related
 eigenvalue $\lambda$. 
\end{lemma}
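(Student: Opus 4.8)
The plan is to run a standard concentration--compactness argument: derive uniform bounds from the eigenvalue equation, extract a convergent subsequence via the compactness Lemma~\ref{lem3}, and pass to the limit in the weak formulation, the one delicate point being that the limiting potential is unbounded.

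First I would test the eigenvalue equation $\widetilde{\mathcal L}_{\si}\varphi_\si=\lambda_\si\varphi_\si$ against $\varphi_\si$ in $L^2_r$. Using $\|\varphi_\si\|_{L^2_r}=1$ and $\lambda_\si\le 4$, this gives the identity
\[
\|\varphi_\si'\|_{L^2_r}^2+\int_0^\infty\bigl(V_\si(r)+K\bigr)\varphi_\si(r)^2 r^{d-1}\,\dd r=\lambda_\si+K\le 4+K .
\]
Since $V_\si+K\ge 0$ by the uniform lower bound, both terms are nonnegative, so $(\varphi_\si)$ is bounded in $H^1_r$ and satisfies precisely the hypothesis of Lemma~\ref{lem3}. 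Hence the family is relatively compact in $L^2_r$, and I can extract $\si_n\to 0$ with $\varphi_{\si_n}\to\varphi_0$ strongly in $L^2_r$, $\varphi_{\si_n}\rightharpoonup\varphi_0$ weakly in $H^1_r$ (so $\varphi_0\in H^1_r$), and $\lambda_{\si_n}\to\lambda\in[-4,4]$. Strong $L^2_r$ convergence preserves the normalization, so $\|\varphi_0\|_{L^2_r}=1$.

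Next I would identify $\varphi_0$ as an eigenfunction of $\mathcal L_0$. The key elementary fact is that $V_\si\to V_0:=r^2-d-2$ locally uniformly: on any compact set $u_0$ is bounded away from $0$ and $u_\si\to u_0$ uniformly, while $\tfrac{z^{2\si}-1}{\si}\to 2\ln z$ and $z^{2\si}\to1$ uniformly on compact subsets of $(0,\infty)$, so the two integrals defining $V_\si$ converge to $-\ln u_0^2-2=r^2-d-2$. Testing $\widetilde{\mathcal L}_{\si_n}\varphi_{\si_n}=\lambda_{\si_n}\varphi_{\si_n}$ against a fixed $\psi\in\mathcal C^\infty_{0,r}$ and splitting the potential term as $V_{\si_n}\psi=(V_{\si_n}-V_0)\psi+V_0\psi$, the first piece vanishes by local uniform convergence together with the boundedness of $\|\varphi_{\si_n}\|_{L^2_r}$, while the weak $H^1_r$ and strong $L^2_r$ convergences handle the remaining terms; passing to the limit yields $\mathcal L_0\varphi_0=\lambda\varphi_0$ in the distributional sense.

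The main obstacle is that $V_0$ is unbounded, so this distributional identity does not by itself place $\varphi_0$ in $\mathrm{Dom}(\mathcal L_0)=\Sigma^2$; this is where the weighted bound and the equitightness built into Lemma~\ref{lem3} become essential. Applying Fatou's lemma to the nonnegative integrands $(V_{\si_n}+K)\varphi_{\si_n}^2$, which converge a.e. to $(V_0+K)\varphi_0^2$ along a further subsequence, I would obtain $\int_0^\infty(V_0+K)\varphi_0^2 r^{d-1}\,\dd r\le 4+K$, hence $r^2\varphi_0\in L^2_r$. Rewriting the equation as $-\Delta_r\varphi_0=(\lambda+d+2-r^2)\varphi_0\in L^2_r$ and invoking elliptic regularity then gives $\varphi_0\in H^2_r$, so $\varphi_0\in\Sigma^2$ and $\varphi_0$ is a genuine $L^2_r$-normalized eigenfunction of $\mathcal L_0$ with eigenvalue $\lambda$, completing the proof.
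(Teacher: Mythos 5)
Your argument coincides with the paper's proof in all of its essential steps: the quadratic-form identity
\begin{equation*}
\|\varphi_\si'\|_{L^2_r}^2+\int_0^\infty\(V_\si(r)+K\)\varphi_\si(r)^2\,r^{d-1}\,\dd r=\lambda_\si+K,
\end{equation*}
the resulting uniform $H^1_r$ and weighted bounds, the extraction of a strongly $L^2_r$-convergent subsequence via Lemma~\ref{lem3}, the pointwise (indeed locally uniform) convergence $V_\si\to V_0=r^2-d-2$, and the passage to the limit in the eigenvalue equation. The only cosmetic difference is that the paper records the limit through the convergences $\Delta\varphi_{\si_n}\to\Delta\varphi_0$ in $H^{-2}$, $V_{\si_n}\varphi_{\si_n}\to V_0\varphi_0$ in $L^2_{\rm loc}$ and $\lambda_{\si_n}\varphi_{\si_n}\to\lambda\varphi_0$ in $L^2$, whereas you test against a fixed $\psi\in\mathcal{C}^\infty_{0,r}$; these are the same computation.

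Your final paragraph attempts more than the paper does --- the paper stops at the distributional identity $-\Delta\varphi_0+V_0\varphi_0=\lambda\varphi_0$ and never verifies $\varphi_0\in\Sigma^2={\rm Dom}(\mathcal{L}_0)$ --- and it is there that you make a quantitative slip. Fatou's lemma applied to the nonnegative functions $(V_{\si_n}+K)\varphi_{\si_n}^2$ yields
\begin{equation*}
\int_0^\infty\(r^2-d-2+K\)\varphi_0(r)^2\,r^{d-1}\,\dd r\le 4+K,
\end{equation*}
which says $r\varphi_0\in L^2_r$ (membership in the \emph{form} domain $\{f\in H^1:\ |x|f\in L^2\}$), not $r^2\varphi_0\in L^2_r$ as you claim. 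Consequently the rewriting $-\Delta_r\varphi_0=(\lambda+d+2-r^2)\varphi_0\in L^2_r$ is not justified at this stage, and the elliptic-regularity step does not run as written. The conclusion is nevertheless correct and standard to recover, without Fatou at all: the harmonic oscillator $-\Delta+|x|^2$ is essentially self-adjoint on $\mathcal{C}^\infty_0(\R^d)$, and your distributional identity states precisely that $(-\Delta+|x|^2)\varphi_0=(\lambda+d+2)\varphi_0\in L^2$, so $\varphi_0$ lies in the domain of the adjoint, which equals the domain $\Sigma^2$ of the closure (equivalently, pairing the equation with the Hermite basis shows $\varphi_0$ is a finite linear combination of Hermite eigenfunctions). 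With this replacement your proof is complete and, in its main body, identical to the paper's.
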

\begin{proof}
  We compute
  \begin{equation*}
    \|\varphi_\si'\|_{L^2_r}^2 +
    \int_0^\infty V_\si(r)\varphi_\si(r)^2r^{d-1}\dd r = \<
    \widetilde{\mathcal{L}}_\si \varphi_\si, \varphi_\si\> =
    \lambda_\si\< \varphi_\si,\varphi_\si\>=\lambda_\si.
  \end{equation*}
  Thus,
  \begin{equation*}
     \|\varphi_\si\|_{H^1_r}^2 +
    \int_0^\infty \(V_\si(r)+K\)\varphi_\si(r)^2r^{d-1}\dd r =
    1+\lambda_\si+K\le 5+K,
  \end{equation*}
  and we can invoke Lemma~\ref{lem3}. Up to a subsequence,
  $\varphi_{\si_n}\to \varphi_0$ in $L^2_r$ and 
 $\lambda_{\si_n}\to \lambda$ for some $\varphi_0\in H^1_r$
 and $\lambda\in [-4,4]$. It remains to show that $\mathcal L_0\varphi
 = \lambda\varphi$.

 We readily check the pointwise convergence $V_\si(r)\to V_0(r) := -\ln
 u_0(r)^2-2=r^2-d-2$ as $\si\to 0$. We thus have the convergences
 \begin{align*}
 &  \Delta \varphi_\si\Tend \si 0 \Delta \varphi\quad \text{in
   }H^{-2},\\
 & V_\si \varphi_\si  \Tend \si 0 V_0\varphi \quad \text{in }L^2_{\rm
   loc},\\
 & \lambda_\si \varphi_\si  \Tend \si 0 \lambda\varphi \quad \text{in }L^2. 
 \end{align*}
 Passing to the limit in the equation
 $\widetilde{\mathcal{L}}_\si\varphi_\si \equiv -\Delta
 \varphi_\si+V_\si\varphi_\si = \lambda_\si\varphi_\si$, we obtain
 \begin{equation*}
   -\Delta \varphi + V_0\varphi = \lambda\varphi\quad\text{in
   }H^{-1}_{\rm loc},
 \end{equation*}
 and $-\Delta +V_0=\mathcal L_0$, hence the lemma. 
\end{proof}
We infer the announced result:
\begin{proposition}\label{prop:spectre}
  There exists $\si_0>0$ such that for all $0<\si\le \si_0$,
  $\si_p(\widetilde{\mathcal{L}}_\si)\cap [-1,1]=\emptyset$. 
\end{proposition}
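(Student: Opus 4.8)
The plan is to prove Proposition~\ref{prop:spectre} by contradiction, exploiting the compactness machinery established in Lemmas~\ref{lem3}--\ref{lem4} to reduce any hypothetical eigenvalue of $\widetilde{\mathcal{L}}_\si$ near zero to an eigenvalue of the limiting operator $\mathcal{L}_0$, and then invoking the explicit spectral gap computed in the preceding subsection.

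First I would suppose, for contradiction, that the conclusion fails: there exists a sequence $\si_n \to 0$ together with eigenvalues $\lambda_{\si_n} \in \si_p(\widetilde{\mathcal{L}}_{\si_n}) \cap [-1,1]$ and associated $L^2_r$-normalized eigenfunctions $\varphi_{\si_n}$. Since $\lambda_{\si_n} \in [-1,1] \subset [-4,4]$, the hypotheses of Lemma~\ref{lem4} are satisfied. I would then apply Lemma~\ref{lem4} directly: up to extracting a further subsequence, $\varphi_{\si_n} \to \varphi_0$ strongly in $L^2_r$ (so $\|\varphi_0\|_{L^2_r} = 1$, and in particular $\varphi_0 \neq 0$) and $\lambda_{\si_n} \to \lambda$ for some limit $\lambda \in [-1,1]$, and moreover $\varphi_0$ is a genuine eigenfunction of $\mathcal{L}_0$ with eigenvalue $\lambda$. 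Thus $\lambda \in \si_p(\mathcal{L}_0) \cap [-1,1]$.

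The contradiction now comes from the spectral description of $\mathcal{L}_0 = H_{\mathrm{rad}} - d - 2$ obtained earlier: its sorted eigenvalues satisfy $\lambda_0^{(0)} = -2$ and $\lambda_0^{(k)} \ge 2$ for every $k \ge 1$ (with $\lambda_0^{(1)} = 4d-2 \ge 2$). Consequently $\si_p(\mathcal{L}_0) \cap [-1,1] = \emptyset$, since no eigenvalue of $\mathcal{L}_0$ lies in the open interval $(-2,2)$. This contradicts $\lambda \in \si_p(\mathcal{L}_0) \cap [-1,1]$, and the proposition follows.

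The only genuinely delicate point is the strong $L^2_r$ convergence of the eigenfunctions together with the verification that $\varphi_0$ is nonzero and is an actual eigenfunction of $\mathcal{L}_0$; but this is precisely the content of Lemma~\ref{lem4}, which packages the equitightness from Lemma~\ref{lem2}, the Fr\'echet--Kolmogorov compactness from Lemma~\ref{lem3}, and the passage to the limit in the eigenvalue equation. Since I am entitled to use Lemma~\ref{lem4} as stated, the remaining argument is essentially a one-line spectral comparison, so there is no substantial obstacle once the lemmas are in hand. The choice of the window $[-1,1]$ (as opposed to the larger $[-4,4]$ appearing in Lemma~\ref{lem4}) is exactly what guarantees that the limit $\lambda$ is strictly separated from both $-2$ and $2$, leaving no room for a limiting eigenvalue.
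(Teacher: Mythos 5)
Your proof is correct, and it takes a leaner route than the paper's, though it rests on the same two ingredients: the compactness and limit-identification statement of Lemma~\ref{lem4}, and the spectral gap $\si_p(\mathcal{L}_0)\subset\{-2\}\cup[2,\infty)$ computed from the shifted radial harmonic oscillator. You argue by contradiction on an \emph{arbitrary} eigenvalue in the window: any sequence $\lambda_{\si_n}\in\si_p(\widetilde{\mathcal{L}}_{\si_n})\cap[-1,1]$ with $\si_n\to 0$ produces, via Lemma~\ref{lem4}, a normalized (hence nonzero) eigenfunction of $\mathcal{L}_0$ with eigenvalue $\lambda\in[-1,1]$, which is impossible; the strong $L^2_r$ convergence in Lemma~\ref{lem4} is exactly what prevents the mass from escaping and the limit from being trivial, so the contradiction is genuine. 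The paper proceeds differently: it tracks the two lowest sorted eigenvalues, first showing $\lambda_\si^{(0)}\to\lambda_0^{(0)}=-2$ by identifying the limiting eigenfunction as the ground state of $\mathcal{L}_0$ (using that it is radial and nonincreasing), and then showing $\liminf_{\si\to0}\lambda_\si^{(1)}\ge 2$ by passing the orthogonality
\begin{equation*}
  0=\<\varphi_{\si_n}^{(0)},\varphi_{\si_n}^{(1)}\>\Tend n \infty \<\varphi_0^{(0)},\varphi_0\>
\end{equation*}
to the limit, which forces the limit of the second eigenfunction away from the ground state. Your version bypasses both the identification of the limiting ground state and the orthogonality step, at the cost of yielding slightly less information: the paper's argument additionally locates $\lambda_\si^{(0)}$ near $-2$ and pushes $\lambda_\si^{(1)}$ above $2-o(1)$, refinements that are not in fact used elsewhere, so nothing is lost for the purposes of Lemma~\ref{lem6}. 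One cosmetic point: Lemma~\ref{lem4} is stated for a family $(\varphi_\si)_{\si>0}$, whereas you apply it along a sequence $\si_n\to0$; this is harmless, since its proof only uses the uniform bound and extracts a subsequence tending to $0$, and the paper itself invokes it in the same way.
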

\begin{proof}
  We may assume that along some sequence $\si_n\to 0$,
  $\lambda_{\si_n}^{(0)}\le 2$ (the lowest eigenvalue of
  $\widetilde{\mathcal{L}}_{\si_n}$), for otherwise the result is
  straightforward. Up to a subsequence, $\lambda_{\si_n}^{(0)}\to
  \lambda_0$, and $\varphi_{\si_n}$ converges in $L^2_r$ to some
  normalized eigenfunction $\varphi_0$ of $\mathcal L_0$, associated
  to $\lambda_0$. Moreover, $\varphi_0$ is radially symmetric and
  nonincreasing (not an excited state), so necessarily
  $\varphi_0=\varphi_0^{(0)}$ and $\lambda_0=\lambda_0^{(0)}=-2$. As
  the limit is unique, no subsequence is needed.

  Consider now the second eigenvalue $\lambda_{\si}^{(1)}$, and
  suppose
  \begin{equation*}
    \nu_0=\liminf_{\si\to 0}\lambda_{\si}^{(1)}=\lim_{n\to \infty}
    \lambda_{\si_n}^{(1)}\le 4, 
  \end{equation*}
  for some sequence $\si_n\to 0$.
  Note that if $\nu_0>2$, the proposition is proven. Let
  $\varphi_{\si_n}^{(1)}$ be a normalized eigenfunction associated to
  $\lambda_{\si_n}^{(1)}$. Up to a subsequence,
  $\varphi_{\si_n}^{(1)}$ converges to an eigenfunction $\varphi_0$
  of $\mathcal L_0$, with eigenvalue $\lambda_0$, from
  Lemma~\ref{lem4}. In addition,
  \begin{equation*}
    0= \<\varphi_{\si_n}^{(0)},\varphi_{\si_n}^{(1)}\>\Tend n \infty
    \< \varphi_{0}^{(0)},\varphi_{0}\>.
  \end{equation*}
  Therefore, $\lambda_0>\lambda_0^{(0)}$, and $\lambda_0\ge
  \lambda_0^{(1)}=4d-2\ge 2$, hence the result. 
\end{proof}

\subsubsection{Convergence}

\begin{lemma}\label{lem6}
  There exists $C>0$ such that the following holds. Let $\si_0$ given
  by Proposition~\ref{prop:spectre}. For any $\si\le
  \si_0$, for any $\psi\in H^1_r$ such that
  $\widetilde{\mathcal{L}}_{\si}\psi=g\in L^2_r$,
  \begin{equation*}
    \|\psi\|_{H^1_r}^2 + \int_0^\infty  \(V_\si(r)+K\)\psi
    (r)^2r^{d-1}\dd r\le C\|g\|^2_{L^2_r},\quad \forall \si\le \si_0.
  \end{equation*}
\end{lemma}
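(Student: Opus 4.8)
The plan is to derive everything from the spectral gap provided by Proposition~\ref{prop:spectre}, converting that statement about the point spectrum into a uniform resolvent bound on $L^2_r$, and then into the claimed energy estimate by testing the equation against $\psi$.

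First I would observe that, shrinking $\si_0$ if necessary so that $\si_0\le 1$, the \emph{entire} spectrum of $\widetilde{\mathcal{L}}_\si$ avoids the interval $[-1,1]$ for every $\si\le\si_0$. Indeed, by the preceding lemma the essential spectrum is $[1/\si,\infty)\subset[1,\infty)$, while by Proposition~\ref{prop:spectre} the point spectrum, which is contained in $[-K,1/\si]$, does not meet $[-1,1]$. Hence $\mathrm{dist}\bigl(0,\sigma(\widetilde{\mathcal{L}}_\si)\bigr)\ge 1$ uniformly in $\si\le\si_0$. Since $\widetilde{\mathcal{L}}_\si$ is self-adjoint, the spectral theorem gives
\[
\|\widetilde{\mathcal{L}}_\si^{-1}\|_{L^2_r\to L^2_r}\le 1,
\]
so that $\psi=\widetilde{\mathcal{L}}_\si^{-1}g$ is the unique $H^1_r$ solution and satisfies $\|\psi\|_{L^2_r}\le\|g\|_{L^2_r}$.

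Second, I would test the equation against $\psi$. As $\psi=\widetilde{\mathcal{L}}_\si^{-1}g$ lies in the operator domain, it belongs to the form domain, the weighted integral $\int_0^\infty (V_\si+K)\psi^2 r^{d-1}\,\dd r$ is finite, and the integration by parts is legitimate:
\[
\langle g,\psi\rangle=\langle\widetilde{\mathcal{L}}_\si\psi,\psi\rangle=\|\psi'\|_{L^2_r}^2+\int_0^\infty V_\si(r)\,\psi(r)^2 r^{d-1}\,\dd r.
\]
Adding $K\|\psi\|_{L^2_r}^2$ to both sides, bounding $\langle g,\psi\rangle\le\|g\|_{L^2_r}\|\psi\|_{L^2_r}$ by Cauchy--Schwarz, and inserting $\|\psi\|_{L^2_r}\le\|g\|_{L^2_r}$, I obtain
\[
\|\psi'\|_{L^2_r}^2+\int_0^\infty\bigl(V_\si(r)+K\bigr)\psi(r)^2 r^{d-1}\,\dd r\le(1+K)\|g\|_{L^2_r}^2 .
\]
Finally, adding $\|\psi\|_{L^2_r}^2\le\|g\|_{L^2_r}^2$ to turn $\|\psi'\|_{L^2_r}^2$ into the full $H^1_r$ norm yields the lemma with the explicit, $\si$-independent constant $C=2+K$.

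The only genuinely delicate point will be justifying the quadratic-form identity: a priori a weak $H^1_r$ solution of $\widetilde{\mathcal{L}}_\si\psi=g$ need not have $\int V_\si\psi^2$ finite, since $V_\si$ grows up to $1/\si$ at infinity. I would close this gap by invertibility, so that the $H^1_r$ solution is unique and coincides with $\widetilde{\mathcal{L}}_\si^{-1}g\in\mathrm{Dom}(\widetilde{\mathcal{L}}_\si)$, which automatically lies in the form domain; alternatively, for a self-contained argument, I would test against $\zeta_R^2\psi$ with a smooth radial cutoff $\zeta_R$ and let $R\to\infty$, using $V_\si+K\ge 0$ together with monotone convergence to control the weighted term and to absorb the commutator $[\,\cdot\,,\zeta_R]$ contributions. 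Everything else is routine, and the key input --- the uniform spectral gap --- has already been established.
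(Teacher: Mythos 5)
Your proposal is correct and takes essentially the same route as the paper: both convert the uniform gap of Proposition~\ref{prop:spectre} (together with the location of the essential spectrum) into the statement that the spectrum of $\widetilde{\mathcal{L}}_\si$ avoids $[-1,1]$, deduce $\|\psi\|_{L^2_r}\le \|g\|_{L^2_r}$ via the spectral theorem, and then bound the quadratic form $\langle(\widetilde{\mathcal{L}}_\si+K)\psi,\psi\rangle$. The only cosmetic difference is that the paper evaluates this form in the spectral representation, using the boundedness of $z\mapsto (z+K)/z^2$ on $\R\setminus[-1,1]$, whereas you pair the equation with $\psi$ and apply Cauchy--Schwarz; your concern about the form domain is harmless since for fixed $\si$ the potential is bounded, $-K\le V_\si\le 1/\si$, so the weighted integral is automatically finite and only the uniformity in $\si$ matters, which your argument delivers.
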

\begin{proof}
  Let $U_\si$ be unitary operators on $L^2_r$ and $h_\si$ such
  that $\widetilde{\mathcal{L}}_{\si}= U_\si^{-1} h_\si (\rho)U_\si$ by the
  spectral theorem. We know that $h_\si(\rho)$ belongs to the
  spectrum of 
  $\widetilde{\mathcal{L}}_{\si}$ for almost all $\rho>0$, and so
  $|h_\si(\rho)|\ge 1$ for almost all $\rho>0$, as soon as $\si\le \si_0$
  from Proposition~\ref{prop:spectre}.
  \smallbreak

  Writing $g = U_\si^{-1} h_\si (\rho)U_\si\psi$, we have $\psi = U_\si^{-1}
  \frac{1}{h_\si (\rho)}U_\si g$, and thus
  \begin{equation*}
    \|\psi\|_{L^2_r}= \left\|U_\si^{-1}
  \frac{1}{h_\si }U_\si g\right\|_{L^2_r} = \left\|
  \frac{1}{h_\si }U_\si g\right\|_{L^2_r} \le \left\|
  U_\si g\right\|_{L^2_r}=\left\|   g\right\|_{L^2_r}.
\end{equation*}
For the remaining part to estimate,
\begin{align*}
   \|\psi'\|_{L^2_r}^2 + \int_0^\infty
  \(V_\si(r)+K\)\psi(r)^2r^{d-1}\dd r
  & = \< (\widetilde{\mathcal{L}}_{\si} +K)\psi,\psi\>\\
  &  = \< U_\si^{-1} \(h_\si +K\)U_\si\psi,\psi\>\\
  &= \< U_\si^{-1} \(h_\si +K\) \frac{1}{h_\si }U_\si g,U_\si^{-1}
    \frac{1}{h_\si }U_\si g\>\\
  & =\int_0^\infty \frac{h_\si(\rho) +K}{h_\si(\rho)^2} \(U_\si
    g (\rho)\)^2\rho^{d-1}\dd \rho.
\end{align*}
Since the map $z\mapsto \frac{z+K}{z^2}$ is bounded in $\R\setminus
[-1,1]$, we infer that there exists $C>0$ such that
\begin{equation*}
  \int_0^\infty \frac{h_\si(\rho) +K}{h_\si(\rho)^2} \(U_\si
    g (\rho)\)^2\rho^{d-1}\dd \rho\le C \int_0^\infty \(U_\si
    g (\rho)\)^2\rho^{d-1}\dd \rho= C \|g\|_{L^2_r}^2,
\end{equation*}
hence the result. 
\end{proof}

We can now prove the end of Theorem~\ref{theo:log}:
\begin{corollary}
  The family $(v_\si)_\si$ is bounded in $H^1_r$, and
  converges strongly in $L^2_r$ to $\mu_0$ given by
  \eqref{expr-mu}. By interpolation, the convergence holds in $H^s_r$ for all $0 \leq s<1$. 
\end{corollary}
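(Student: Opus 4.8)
The plan is to apply the a priori estimate of Lemma~\ref{lem6} to the equation \eqref{eq:v_si} satisfied by $v_\si$, extract a convergent subsequence by the compactness of Lemma~\ref{lem3}, and identify the limit as the unique solution $\mu_0 = \mathcal{L}_0^{-1} h_0$ by passing to the limit exactly as in the proof of Lemma~\ref{lem4}.

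First I would check that the source term $h_\si$ in \eqref{eq:v_si} is bounded in $L^2_r$ uniformly in $\si$, and in fact converges to $h_0 = \tfrac12 u_0 (\ln u_0^2)^2$. The first contribution to $h_\si$ is exactly $h_0$, which lies in $L^2_r$ since $u_0(r) = e^{(d-r^2)/2}$ decays super-exponentially while $\ln u_0^2 = d-r^2$ grows only polynomially. For the second contribution, the factor $u_0^{2\theta\si}$ is bounded by $\max(1,\|u_0\|_{L^\infty}^{2\si})$ uniformly in $\theta\in[0,1]$, so this term is $\O(\si)$ in $L^2_r$ and vanishes as $\si\to 0$. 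Feeding $\|h_\si\|_{L^2_r}\le C$ into Lemma~\ref{lem6} with $\psi = v_\si$, $g = h_\si$ (licit for $\si\le\si_0$ given by Proposition~\ref{prop:spectre}, since then $\si_p(\widetilde{\mathcal{L}}_\si)\cap[-1,1]=\emptyset$) yields the uniform bound
\[
\|v_\si\|_{H^1_r}^2 + \int_0^\infty \(V_\si(r)+K\) v_\si(r)^2 r^{d-1}\dd r \le C, \qquad \forall\,\si\le\si_0,
\]
which is precisely the hypothesis of Lemma~\ref{lem3}. Hence $(v_\si)_\si$ is bounded in $H^1_r$ and relatively compact in $L^2_r$ as $\si\to 0$.

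Next I would identify the limit. Along a subsequence $\si_n\to 0$, one has $v_{\si_n}\to v_*$ strongly in $L^2_r$ and weakly in $H^1_r$ for some $v_*\in H^1_r$. Passing to the limit in \eqref{eq:v_si} follows the argument of Lemma~\ref{lem4} verbatim: the Laplacian converges in $H^{-2}$, the pointwise limit $V_\si\to V_0 = r^2-d-2$ gives $V_\si v_{\si_n}\to V_0 v_*$ in $L^2_{\mathrm{loc}}$, and $h_{\si_n}\to h_0$ in $L^2_r$, so $v_*$ solves $\mathcal{L}_0 v_* = h_0$. The weighted bound above, together with weak lower semicontinuity, places $v_*$ in $\Sigma^1$, and elliptic regularity for the harmonic oscillator then gives $v_*\in\Sigma^2 = \mathrm{Dom}(\mathcal{L}_0)$. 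Since the spectrum of $\mathcal{L}_0$ computed above is $\{-2,4d-2,\dots\}$ and does not contain $0$, the operator $\mathcal{L}_0$ is invertible, so its unique solution is $v_* = \mathcal{L}_0^{-1}h_0 = \mu_0$, given explicitly by \eqref{expr-mu}. As the limit is independent of the subsequence, the whole family converges: $v_\si\to\mu_0$ in $L^2_r$. Finally, the convergence in $H^s_r$ for $0\le s<1$ follows by interpolation: since $v_\si-\mu_0$ is bounded in $H^1_r$ and tends to $0$ in $L^2_r=H^0_r$, the inequality $\|v_\si-\mu_0\|_{H^s_r}\le\|v_\si-\mu_0\|_{L^2_r}^{1-s}\|v_\si-\mu_0\|_{H^1_r}^{s}$ gives the claim. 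I expect the main obstacle to be the passage to the limit in \eqref{eq:v_si}: the potential $V_\si$ carries singular $1/\si$ terms and converges to $V_0$ only pointwise (and in $L^2_{\mathrm{loc}}$), so care is needed to prevent mass from escaping to infinity. This is exactly the equitightness already built into Lemma~\ref{lem3} through Lemma~\ref{lem2}, so the argument parallels Lemma~\ref{lem4} without new difficulties.
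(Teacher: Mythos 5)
Your proposal is correct and follows essentially the same route as the paper: bound $h_\si$ and show $h_\si \to h_0$ in $L^2_r$, apply Lemma~\ref{lem6} (valid for $\si \le \si_0$ by Proposition~\ref{prop:spectre}) to get the uniform $H^1_r$ and weighted bounds, extract a strongly $L^2_r$-convergent subsequence via Lemma~\ref{lem3}, pass to the limit in \eqref{eq:v_si} as in Lemma~\ref{lem4}, identify the limit as $\mu_0 = \mathcal{L}_0^{-1} h_0$ by invertibility of $\mathcal{L}_0$, and conclude for the whole family by uniqueness of the limit, with the $H^s_r$ convergence by interpolation. Your added remarks on placing the limit in $\mathrm{Dom}(\mathcal{L}_0)$ via the weighted bound and elliptic regularity only make explicit a step the paper leaves implicit.
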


\begin{proof}
  Denote by $h_\si$ the right hand side of \eqref{eq:v_si}. It is easy
  to check that
  \begin{equation*}
    h_\si\Tend \si 0 h_0=\frac{1}{2}u_0\(\ln u_0^2\)^2 \quad \text{in
    }L^2_r. 
  \end{equation*}
 Lemma~\ref{lem6} implies that $(v_\si)_\si$ is bounded in $H^1_r$, and together with  Lemma~\ref{lem3}, we infer that up to a
 subsequence, $v_\si$ converges strongly in $L^2_r$, to some $v\in
 H^1_r$. Passing to the limit in \eqref{eq:v_si}, which we rewrite as
 \begin{equation*}
   -\Delta v_\si + V_\si v_\si = h_\si,
 \end{equation*}
 and arguing like in the proof of Lemma~\ref{lem4},
  we come up with
 \begin{equation*}
   -\Delta v + V_0 v = h,\quad \text{that is}\quad \mathcal L_0v = h_0.
 \end{equation*}
 We infer that $v=\mu_0$, and by uniqueness of the limit, the whole
 sequence $(v_\si)_\si$ is converging to $\mu_0$ in $L^2_r$ as $\si\to
 0$, hence the result. 
\end{proof}

\begin{lemma}
	The family $(v_\si)_\si$ converges to $\mu_0$ in $\mathcal{C}^\infty_{\textnormal{loc}}$.
\end{lemma}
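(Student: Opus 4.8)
The plan is to run a standard interior elliptic bootstrap on the equation satisfied by $v_\si$, using as input the strong $L^2_r$ convergence $v_\si\to\mu_0$ established in the previous corollary, together with the $\mathcal{C}^\infty_{\rm loc}$ convergence $u_\si\to u_0$ proved in Section~\ref{sec:u_si-u_0}. Viewing all functions as radial functions on $\R^d$, I would rewrite \eqref{eq:v_si} as the Poisson-type equation
\begin{equation*}
  -\Delta v_\si = h_\si - V_\si v_\si,
\end{equation*}
and treat the right-hand side as a source term whose local regularity I control uniformly in $\si$.

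First I would check that the coefficients behave well on every ball $B_R=B(0,R)\subset\R^d$. Since $u_0(x)=e^{(d-|x|^2)/2}\ge e^{(d-R^2)/2}>0$ on $B_R$ and $u_\si\to u_0$ in $\mathcal{C}^\infty_{\rm loc}$, the convex combination $(1-\theta)u_0+\theta u_\si$ stays bounded away from $0$ uniformly in $\theta\in[0,1]$ and in $\si$ small, and converges to $u_0$ in $\mathcal{C}^k(B_R)$ for every $k$. Because $z\mapsto \frac{z^{2\si}-1}{\si}$ and $z\mapsto z^{2\si}$ are smooth and converge, with all their derivatives, to $\ln z^2$ and $1$ respectively on compact subsets of $(0,\infty)$ (this is the $\mathcal{C}^\infty_{\rm loc}$ refinement recorded in Section~\ref{sec:u_si-u_0}), the potential $V_\si$ defined in \eqref{eq:V_si} converges to $V_0=r^2-d-2$ in $\mathcal{C}^k(B_R)$, and likewise $h_\si\to h_0=\tfrac12 u_0(\ln u_0^2)^2$ in $\mathcal{C}^k(B_R)$. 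In particular $V_\si$ and $h_\si$ are bounded in $\mathcal{C}^k(B_R)$ uniformly in $\si\le\si_0$ for every $k$ and $R$.

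With this in hand the bootstrap is routine. Starting from the uniform bound on $\|v_\si\|_{L^2_r}$ from the previous corollary, the source $h_\si-V_\si v_\si$ is bounded in $L^2(B_R)$ uniformly in $\si$, so interior $L^p$ estimates for $-\Delta$ (see e.g. \cite[Theorem~9.11]{GilbargTrudinger}) give a uniform bound for $v_\si$ in $H^2(B_{R'})$ for any $R'<R$. Feeding this back, the source becomes uniformly bounded in $H^2$ on slightly smaller balls, hence $v_\si$ is uniformly bounded in $H^4$, and so on; iterating and using Sobolev embeddings yields uniform bounds for $v_\si$ in $W^{2k,p}(B_{R'})$ for every integer $k\ge 0$, every $p\in(1,\infty)$, and every $R'>0$. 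By Rellich's theorem these bounds make $(v_\si)_\si$ relatively compact in $\mathcal{C}^\infty_{\rm loc}$; since every limit point is a solution of $\mathcal{L}_0 v=h_0$ and must coincide with the $L^2_r$-limit $\mu_0$, the whole family converges to $\mu_0$ in $\mathcal{C}^\infty_{\rm loc}$, which is the claim.

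The only point requiring genuine care, and hence the main obstacle, is the first step: controlling the potential $V_\si$, which contains the a priori singular quotient $\frac{(\cdot)^{2\si}-1}{\si}$. This is precisely where the uniform positive lower bound on $u_0$, and thereby on the relevant convex combinations, on compact sets is essential, since it confines the argument of this quotient to a compact subset of $(0,\infty)$ on which convergence to $\ln(\cdot)^2$ holds together with all derivatives. Away from this, the argument is the same standard elliptic bootstrap already used in Section~\ref{sec:u_si-u_0} for the convergence $u_\si\to u_0$.
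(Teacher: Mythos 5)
Your proof is correct and follows essentially the same route as the paper's: uniform $\mathcal{C}^\infty(K)$ bounds on $V_\si$ and $h_\si$ via the strict positivity of $u_0$ on compact sets together with the $\mathcal{C}^\infty_{\rm loc}$ convergence $u_\si \to u_0$, followed by the standard interior elliptic bootstrap of \cite[Theorems~9.11~\&~9.19]{GilbargTrudinger} and Sobolev embeddings, with the limit identified through the previously established $L^2_r$ convergence to $\mu_0$. The only cosmetic difference is that the paper seeds the bootstrap with the uniform $H^1$ bound on $v_\si$ while you start from the $L^2_r$ bound, which works equally well since $v_\si$ already has the qualitative regularity needed for the interior estimates.
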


\begin{proof}
	We recall that $v_\si$ satisfies \eqref{eq:v_si}. Moreover, from the convergence of $u_\si$ to $u_0$ in $\mathcal{C}^\infty_{\textnormal{loc}}$ and using the fact that $u_0$ is strictly positive on every bounded set (and thus far from $0$), we deduce that $V_\si$ and $h_\si$ are bounded in $\mathcal{C}^\infty (K)$ uniformly in $\si$ for every compact set $K$.
	Thus, using regularity theory for elliptic equations (see for instance \cite[Theorems~9.11~\& ~9.19]{GilbargTrudinger}) and bootstraping (with the first step using that $v_\si$ is unformly bounded in $H^1 (\mathbb{R}^d)$), we deduce that $v_\si$ is also uniformly bounded in every $W^{m, p} (K)$ for any compact set $K$. This leads to the conclusion by Sobolev embeddings.
\end{proof}


\section{The limit $\sigma \to \sigma_*$: convergence to the algebraic soliton}
\label{sec-alg-limit}

\subsection{Some properties of the ground state}
\label{sec:extra}

As pointed out in Section~\ref{sec-main_results}, the parameter
$\epsilon(\si)$ in \eqref{eq:ivp-w} is implicit and is defined from the condition 
that $w_{\si}(\rho) = w(\rho;\epsilon(\si),\si)$ is positive and monotonically decreasing with the fast (exponential) decay condition as $\rho \to \infty$. Here,
we derive some estimates involving $\epsilon$ and $w_\si$, thanks to
Pohozaev identitites.

Multiplying \eqref{eq:ivp-w} by $\rho^{d-1} w_{\sigma}(\rho)$ and
integrating on $(0,\infty)$ by parts with
\begin{equation*}
  \rho^{d-1} w_{\sigma}(\rho) w_{\sigma}'(\rho) |_{\rho = 0}^{\rho \to \infty} = 0,
\end{equation*}
we obtain 
\begin{equation}
\label{Poh-1}
\| w_{\sigma} \|^{2\si + 2}_{L^{2\si+2}_r} = \epsilon(\sigma) \|
w_{\sigma} \|^2_{L^2_r} + \|  w_{\sigma}' \|^2_{L^2_r}. 
\end{equation}
Multiplying \eqref{eq:ivp-w} by $\rho^{d} w'_{\sigma}(\rho)$ and
integrating by parts on $(0,\infty)$,  with
\begin{equation*}
  \rho^{d} (w_{\sigma}'(\rho))^2 |_{\rho = 0}^{\rho \to \infty} = 
\rho^{d} w_{\sigma}^{2}(\rho) |_{\rho = 0}^{\rho \to \infty} = 0,
\end{equation*}
we get
\begin{equation}
\label{Poh-2}
\frac{d}{1 + \sigma} \| w_{\sigma} \|^{2\si + 2}_{L^{2\si+2}_r}
= d \epsilon(\sigma) \| w_{\sigma} \|^2_{L^2_r} + (d-2) 
\|  w_{\sigma}' \|^2_{L^2_r}. 
\end{equation}
In what follows, we can express $d \geq 3$ by using $\si_*$ due to $d = 2 + \frac{2}{\sigma_*}$.

Eliminating $\| w_{\sigma} \|^{2\si + 2}_{L^{2\si + 2}_r}$ from
\eqref{Poh-1}  and \eqref{Poh-2} yields
\begin{equation}\label{eps-expression}
\epsilon(\sigma) = \frac{(\sigma_* - \sigma)  \|w_{\sigma}'
  \|^2_{L^2_r}}{\sigma (1 + \sigma_*) \| w_{\sigma} \|^2_{L^2_r}}.
\end{equation}
On the other hand, eliminating $\| w_{\sigma}' \|^{2}_{L^{2}}$ from
\eqref{Poh-1}  and \eqref{Poh-2} yields
\begin{equation}
\label{alter-expr}
\| w_{\sigma} \|^{2 \si+ 2}_{L^{2\si + 2}_r} =
  \frac{\si_* (1+\si) \epsilon(\si)}{(\si_*-\si)} \| w_{\sigma} \|^2_{L^2_r} ,
\end{equation}
and since $\rho \mapsto w_\si(\rho)$ is nonincreasing on $(0,\infty)$,
interpolation yields
\begin{equation*}
  \| w_{\sigma} \|^{2 \si+ 2}_{L^{2\si + 2}_r}\le
  \|w_\si\|_{L^\infty_r}^{2\si} \|w_\si\|_{L^2_r}^2= \|w_\si\|_{L^2_r}^2.
\end{equation*}
We infer from \eqref{alter-expr} that 
\begin{equation}\label{eq:eps-sublin}
  0 < \epsilon(\si) \le \frac{(\si_*-\si)}{\si_*(1+\si)}, 
\end{equation}
hence $\epsilon(\si)=\O(\si_*-\si)$. Comparison \eqref{eps-expression} with \eqref{eq:eps-sublin} implies
that the ratio $\| w_{\sigma}' \|^2_{L^2_r}/\| w_{\sigma}
\|^2_{L^2_r} $ is uniformly bounded. Therefore, the  quantity
\begin{equation*}
   -\frac{\epsilon(\si)}{\si_*-\si}=
   \frac{\epsilon(\si_*)-\epsilon(\si)}{\si_*-\si} 
 \end{equation*}
 is bounded, and has converging subsequences.  We shall prove
 that $ \epsilon(\si)\sim c(d) (\si_*-\si)$ as $\si\to \si_*$,  for
 some explicit $c(d) > 0$, and no subsequence is needed.

\subsection{Convergence in $L_r^\infty\cap W_{\mathrm{loc}}^{1,\infty}$}

The first convergence result announced in Theorem~\ref{theo:algebraic}
is a direct consequence of the following property. 

\begin{proposition}\label{prop:unif-cv}
  Let $d\ge 3$ and $\si\in (0,\si_*)$. For $w_\si$ the solution to
  \eqref{eq:ivp-w}, with $\epsilon=\O(\si_*-\si)$, and $w_*$ given by
  \eqref{eq:alg-soliton}, there exists $C_0>0$ independent of $\si\in
  (0,\si_*)$ such that for every $R>0$,
  \begin{equation}\label{eq:Ehrenfest}
    \sup_{0\le \rho\le R}|w_\si(\rho)-w_*(\rho)|+
   \sup_{0\le \rho\le R}|w_\si'(\rho)-w_*'(\rho)| \le C_0 (\si_*-\si)e^{C_0R}.
  \end{equation}
  In addition, there exists $C>0$ independent of $\si\in
  (0,\si_*)$ such that
  \begin{equation}
  \label{L-infty-bound}
    \|w-w_*\|_{L^\infty_r}=\sup_{\rho\ge 0}|w_\si(\rho)-w_*(\rho)|\le C \(\ln
  \frac{1}{\si_*-\si}\)^{-2/\si_*}.
  \end{equation}
\end{proposition}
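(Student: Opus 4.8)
The plan is to reduce both estimates to a single Gronwall argument for the difference $y_\si := w_\si - w_*$, and then to promote the resulting local bound to a global one by optimizing a cutoff radius $R$ as a function of $\si_*-\si$. First I would record the equation satisfied by $y_\si$. Since $w_\si$ solves \eqref{eq:ivp-w} and $w_*$ solves the same radial equation with $\epsilon=0$ and $2\si$ replaced by $2\si_*$, both with data $w(0)=1$, $w'(0)=0$, subtracting gives
\[
y_\si'' + \frac{d-1}{\rho} y_\si' = G_\si(\rho), \qquad y_\si(0)=y_\si'(0)=0,
\]
with source
\[
G_\si = \epsilon w_\si - \bigl(w_\si^{2\si+1} - w_*^{2\si+1}\bigr) - \bigl(w_*^{2\si+1} - w_*^{2\si_*+1}\bigr).
\]
Both ground states are positive, nonincreasing, and equal to $1$ at the origin, so $0 < w_\si, w_* \le 1$ everywhere. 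I would then bound the three contributions separately: $|\epsilon w_\si| \le \epsilon = \O(\si_*-\si)$ by \eqref{eq:eps-sublin}; the map $z \mapsto z^{2\si+1}$ is Lipschitz on $[0,1]$ with constant at most $2\si_*+1$, so the middle term is $\le C|y_\si|$; and for the last term, writing $w_*^{2\si} - w_*^{2\si_*} = w_*^{2\si_*}\bigl(e^{2(\si-\si_*)\ln w_*}-1\bigr)$ and using $|e^t-1|\le |t|e^{|t|}$ yields, on $[0,R]$, a bound $C(\si_*-\si)\bigl(1 + |\ln w_*|\bigr)$ with $|\ln w_*(s)| = \frac{1}{\si_*}\ln(1+as^2)$. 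Altogether, for $s \in [0,R]$,
\[
|G_\si(s)| \le C(\si_*-\si)\bigl(1 + \ln(1+aR^2)\bigr) + C|y_\si(s)|.
\]

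To prove \eqref{eq:Ehrenfest} I would run an energy estimate rather than a naive double integration, precisely to exploit the dissipative sign of the radial term. Setting $E_\si = y_\si^2 + (y_\si')^2$ one computes
\[
\tfrac12 E_\si' = y_\si y_\si' + y_\si'\Bigl(G_\si - \frac{d-1}{\rho}y_\si'\Bigr) \le \tfrac12 E_\si + \sqrt{E_\si}\,|G_\si|,
\]
since $-\frac{d-1}{\rho}(y_\si')^2 \le 0$. Hence $\eta_\si := \sqrt{E_\si}$ satisfies $\eta_\si' \le C\eta_\si + C(\si_*-\si)\bigl(1+\ln(1+aR^2)\bigr)$ on $[0,R]$ with $\eta_\si(0)=0$, and Gronwall's lemma gives
\[
\sup_{0\le\rho\le R}\bigl(|y_\si| + |y_\si'|\bigr) \le C(\si_*-\si)\bigl(1+\ln(1+aR^2)\bigr)e^{CR}.
\]
As the logarithmic factor is absorbed by enlarging the exponential constant, this is exactly \eqref{eq:Ehrenfest}.

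For the global bound \eqref{L-infty-bound} I would split $[0,\infty)$ at a radius $R$ to be chosen. On $[0,R]$ the pointwise bound is $|y_\si| \le C_0(\si_*-\si)e^{C_0R}$ from \eqref{eq:Ehrenfest}. On $(R,\infty)$ I would use monotonicity and decay: $|w_\si-w_*| \le w_\si + w_*$, and since both profiles are nonincreasing, $w_*(\rho) \le w_*(R) \le C R^{-2/\si_*}$ from the explicit form \eqref{eq:alg-soliton}, while $w_\si(\rho) \le w_\si(R) \le w_*(R) + |y_\si(R)| \le CR^{-2/\si_*} + C_0(\si_*-\si)e^{C_0R}$. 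Combining the two regions,
\[
\|w_\si - w_*\|_{L^\infty_r} \le C R^{-2/\si_*} + C_0(\si_*-\si)e^{C_0 R}.
\]
Choosing $R = \frac{1}{2C_0}\ln\frac{1}{\si_*-\si}$ makes the exponential term $\O\bigl((\si_*-\si)^{1/2}\bigr)$, which is negligible compared with $R^{-2/\si_*} = C(\ln\frac{1}{\si_*-\si})^{-2/\si_*}$, yielding \eqref{L-infty-bound}.

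The main delicate point is the last term of $G_\si$, coming from the mismatch of the exponents $2\si$ and $2\si_*$: it carries a factor $|\ln w_*|$ that grows (logarithmically) at infinity and therefore cannot be controlled uniformly in $\rho$, only on the finite window $[0,R]$. This is exactly why the local estimate degenerates like $e^{C_0R}$ and why the global statement is logarithmic rather than polynomial in $\si_*-\si$: the whole subtlety lies in the competition between the exponentially-in-$R$ local error and the polynomially-in-$R$ tail decay, resolved by the logarithmic choice of $R$. A secondary technical nuisance is the $1/\rho$ singularity at the origin in the energy argument, which I would handle by working with $\sqrt{E_\si+\delta^2}$ and letting $\delta\to0$, using $y_\si(0)=y_\si'(0)=0$.
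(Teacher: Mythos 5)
Your proposal is correct and follows essentially the same route as the paper: a Gr\"onwall/energy estimate for $q_\si = y_\si^2 + (y_\si')^2$ with the nonlinear difference split into a Lipschitz part and a source coming from the exponent mismatch, followed by the cutoff choice $R_\si = \frac{1}{2C_0}\ln\frac{1}{\si_*-\si}$ and the tail estimate via monotonicity and the algebraic decay of $w_*$. The only (harmless) variation is in the source term: you keep the factor $|\ln w_*| \lesssim \ln(1+aR^2)$ and absorb it into $e^{C_0R}$, whereas the paper observes that $w_*^{2\si+1}|\ln w_*|$ is uniformly bounded on $(0,1]$, so the source is $\O(\si_*-\si)$ uniformly in $\rho$.
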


\begin{proof}
Let $\varphi_\si(\rho)= w_*(\rho)-w_\si(\rho)$ denote the error, and consider
$q_\si=\varphi_\si^2 + 
(\varphi_\si')^2$. We  compute
\begin{equation*}
  q_\si'(\rho) = 2\varphi_\si\varphi_\si' + 2\varphi_\si'\varphi_\si'',
\end{equation*}
so using \eqref{eq:ivp-w}
(and its limiting case $\si=\si_*$ for which
$\epsilon(\si_*)=0$),
\begin{equation*}
  q_\si'(\rho) = 2\varphi_\si\varphi_\si' + 2\varphi_\si'\(
  -\frac{d-1}{\rho}\varphi_\si'  
  -w_*^{2\si_*+1}+w_\si^{2\si+1}-\epsilon(\si)w_\si\). 
\end{equation*}
Young inequality implies $2\varphi_\si\varphi_\si'\le q_\si$. Recall
that we have the uniform estimates
\begin{equation*}
 0< w_\si(\rho),w_*(\rho)\le 1, 
\end{equation*}
and $\epsilon=\O(\si_*-\si)$. We decompose
\begin{equation*}
  w_*^{2\si_*+1}-w_\si^{2\si+1}=
  \underbrace{w_*^{2\si_*+1}-w_*^{2\si+1}}_{=:S_\si}
  +\underbrace{w_*^{2\si+1}-w_\si^{2\si+1}}_{=:G_\si}.
\end{equation*}
Taylor formula yields
\begin{equation*}
  G_\si =(2\si+1) \varphi_\si\int_0^1
  \(w_\si+\theta(w_*-w_\si)\)^{2\si}\dd\theta, 
\end{equation*}
and by the above uniform bounds,
\begin{equation*}
  |G_\si|\le (2\si+1)|\varphi_\si|.
\end{equation*}
Invoking Young inequality again, we infer, for some $C>0$ independent
of $\si\le \si_*$ and $\rho\ge 0$,
\begin{equation*}
  q_\si'(\rho)\le C q_\si(\rho) + S_\si(\rho)^2 +  C(\si_*-\si)^2 .
\end{equation*}
For the source term $S_\si$, Taylor formula for the map $z\mapsto
w_*^z$ yields
\begin{equation*}
  S_\si = w_*^{2\si+1}\(w_*^{2(\si_*-\si)}-1\) =
  2(\si_*-\si)w_*^{2\si+1}\ln w_*\int_0^1w_*^{2\theta(\si_*-\si)}\dd\theta,
\end{equation*}
hence, since $0<w_*\le 1$,
\begin{equation*}
  |S_\si(\rho)|\le 2(\si_*-\si)w_*(\rho)^{2\si+1}|\ln w_*(\rho)|=
  \O\(\si_*-\si\). 
\end{equation*}
As $q_\si(0)=0$, Gr\"onwall lemma implies that there exists $C_0,C_1>0$
independent of $\si\in [\si_*/2,\si_*]$ such
that for every $R>0$,
\begin{equation*}
  \sup_{0\le \rho\le R}q_\si(\rho)\le C_1(\si_*-\si)^2e^{2C_0R},
\end{equation*}
hence \eqref{eq:Ehrenfest}.
\smallbreak

Pick $R_\si = \frac{1}{2C_0} \ln \frac{1}{\si_*-\si}$. We have 
\begin{align*}
  |w_\si(R_\si)|\le |w_*(R_\si)|+ |w_\si(R_\si)-w_*(R_\si)|
  & \lesssim R_\si^{-2/\si_*} + (\si_*-\si) e^{C_0R_\si}\\
  &\lesssim \(\ln  \frac{1}{\si_*-\si}\)^{-2/\si_*}, 
\end{align*}
where we have used the explicit decay for $w_*$. 
Since $w_\si$ and $w_*$ are positive decreasing, for $\rho\ge R_\si$, we
have
\begin{equation*}
  |w_\si(\rho)-w_*(\rho)|\le w_\si(\rho)+ w_*(\rho)\le w_\si(R_\si)+
  w_*(R_\si)\lesssim \(\ln
  \frac{1}{\si_*-\si}\)^{-2/\si_*}. 
\end{equation*}
On the other hand, \eqref{eq:Ehrenfest} yields
\begin{equation*}
  \sup_{0\le \rho\le R_\si} |w_\si(\rho)-w_*(\rho)|\le C_0 (\si_*-\si)
  e^{C_0R_\si} \lesssim \sqrt{\si_*-\si}.
\end{equation*}
Combining the two bounds together yields \eqref{L-infty-bound}.
\end{proof}

\subsection{Convergence in $H^1_r$}

We now turn to the proof of the second convergence result of
Theorem~\ref{theo:algebraic}.  Let $d \ge 5$. 
We consider the minimizer $v_\si$ of the problem
\eqref{eq:variation}, and we denote 
\begin{equation*}
  \K_\si := \inf_{v\in H^1_r} \enstq{\|v'\|_{L^2_r}^2 +
 \frac{1}{\sigma} \| v\|_{L^2_r}^2}{ \ \|v\|_{L^{2\si+2}_r}^{2\si+2}=1},
\end{equation*}
so that $v_\si$ is solution of the equation 
\begin{equation} \label{eq:u_sigma}
v_{\si}'' + \frac{d-1}{r} v_{\si}'+ \K_\si v_{\si}^{1+2\si} = \frac{1}{\si} v_\si. 
\end{equation} 

\begin{lemma} \label{lem:lambda_sigma_bound}
There exists $C_1$, $C_2>0$ and $\eps>0$ such that for all $\si \in
\left[ \si_*-\eps/2,\si_*\right]$, we have $C_1 \le \K_\si \le C_2$.
\end{lemma}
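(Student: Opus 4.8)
The plan is to handle the upper and lower bounds separately. Throughout, $\si$ stays in a fixed neighborhood of $\si_*>0$, so that $1/\si$ is pinched between two positive constants and the functional $\|v'\|_{L^2_r}^2+\frac{1}{\si}\|v\|_{L^2_r}^2$ is comparable, uniformly in $\si$, to $\|v'\|_{L^2_r}^2+\|v\|_{L^2_r}^2$. For the upper bound, I would test $\K_\si$ against a single fixed competitor: fix any nonzero $v_0\in\mathcal{C}^\infty_{0,r}(0,\infty)$ and normalize it, $v=v_0/\|v_0\|_{L^{2\si+2}_r}$, to obtain
\[
\K_\si\le \frac{\|v_0'\|_{L^2_r}^2+\frac{1}{\si}\|v_0\|_{L^2_r}^2}{\|v_0\|_{L^{2\si+2}_r}^2}.
\]
Because $v_0$ is fixed with compact support, the map $p\mapsto\|v_0\|_{L^p_r}$ is continuous and bounded away from $0$ on the compact exponent range $\{2\si+2:\si\in[\si_*-\eps/2,\si_*]\}$, while $1/\si$ is bounded; the right-hand side is therefore bounded above by some $C_2$ independent of $\si$.

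The lower bound is the substantive point, and the main obstacle. It is tempting to discard the $L^2_r$ term and bound $\|v'\|_{L^2_r}^2$ alone using the constraint, but this fails: for a strictly subcritical exponent $2\si+2<\frac{2d}{d-2}$, the scaling $v(\rho)\mapsto\lambda^{d/(2\si+2)}v(\lambda\rho)$ preserves the constraint while sending $\|v'\|_{L^2_r}^2\to 0$ as $\lambda\to0$, so the pure-gradient infimum is zero. What rescues the estimate is that this very scaling forces $\|v\|_{L^2_r}^2\to\infty$, so the two terms must be used jointly. To exploit this, I would interpolate the constraint exponent between $2$ and the critical exponent $\frac{2d}{d-2}=2\si_*+2$: writing $\frac{1}{2\si+2}=\frac{1-\beta_\si}{2}+\frac{\beta_\si}{2d/(d-2)}$ with $\beta_\si\in[0,1]$ (possible precisely because $2\le 2\si+2\le 2\si_*+2$), Hölder's inequality gives, for any admissible $v$,
\[
1=\|v\|_{L^{2\si+2}_r}\le \|v\|_{L^2_r}^{1-\beta_\si}\,\|v\|_{L^{2d/(d-2)}_r}^{\beta_\si}.
\]
The critical Sobolev inequality \eqref{eq:Sob} then yields $1\le \mathcal{S}^{-\beta_\si/2}\|v\|_{L^2_r}^{1-\beta_\si}\|v'\|_{L^2_r}^{\beta_\si}$, and after squaring and applying the weighted arithmetic–geometric (Young) inequality with conjugate exponents $\frac{1}{1-\beta_\si}$ and $\frac{1}{\beta_\si}$,
\[
1\le \mathcal{S}^{-\beta_\si}\left(\|v\|_{L^2_r}^2+\|v'\|_{L^2_r}^2\right).
\]

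Since $\beta_\si\in[0,1]$ and $\mathcal{S}>0$ depends only on $d$, the prefactor $\mathcal{S}^{-\beta_\si}$ is bounded above uniformly in $\si$, so $\|v'\|_{L^2_r}^2+\|v\|_{L^2_r}^2$ is bounded below by a positive constant independent of $\si$ and of the admissible $v$; the comparison from the first paragraph then gives $\K_\si\ge C_1>0$. The only genuinely delicate point is the uniformity as $\si\to\si_*$, and it is handled automatically here: interpolation collapses all the relevant constants onto the single fixed Sobolev constant $\mathcal{S}$, with the weight $\beta_\si$ confined to $[0,1]$ and converging to $\beta_{\si_*}=1$, the endpoint at which the $L^2_r$ term simply drops out and the estimate degenerates gracefully into the pure Sobolev inequality.
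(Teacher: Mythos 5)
Your proof is correct, and it splits cleanly into a half that coincides with the paper's and a half that is genuinely different. For the lower bound you are reproducing the paper's argument: your interpolation exponent $\beta_\si$, defined by $\frac{1}{2\si+2}=\frac{1-\beta_\si}{2}+\frac{\beta_\si}{2\si_*+2}$, is exactly the paper's $\theta=\frac{\si(1+\si_*)}{\si_*(1+\si)}$, and both arguments combine this interpolation with the critical Sobolev inequality \eqref{eq:Sob}; the only cosmetic difference is that you conclude with Young's inequality $a^{1-\beta}b^{\beta}\le (1-\beta)a+\beta b\le a+b$, while the paper bounds the two factors separately via $\|v\|_{L^2_r}^2\le \si\left(\|v'\|_{L^2_r}^2+\tfrac1\si\|v\|_{L^2_r}^2\right)$ and $\|v\|_{L^{2\si_*+2}_r}^2\le \mathcal{S}^{-1}\left(\|v'\|_{L^2_r}^2+\tfrac1\si\|v\|_{L^2_r}^2\right)$ — in both cases uniformity as $\si\to\si_*$ rests on $\beta_\si\in[0,1]$ and the single fixed constant $\mathcal{S}$, and your observation that the estimate degenerates gracefully into the pure Sobolev inequality at $\beta_{\si_*}=1$ is the right way to see it. The upper bound is where you diverge: the paper tests $\K_\si$ against the rescaled Aubin--Talenti soliton $w_{*,\lambda}$ with $\lambda$ chosen so that $\|w_{*,\lambda}\|_{L^{2\si+2}_r}=1$, and then devotes most of the proof to Taylor estimates showing $0<K_4\le\lambda\le K_5$ uniformly in $\si$; this route needs $w_*\in L^2_r$, hence $d\ge 5$ (the standing assumption of the subsection). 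Your fixed compactly supported competitor $v_0$, normalized in $L^{2\si+2}_r$, together with the continuity and positivity of $p\mapsto\|v_0\|_{L^p_r}$ on the compact exponent range, yields the same uniform bound in a few lines, works for every $d\ge 3$, and is all that is needed downstream, since the later lemmas (e.g.\ the boundedness of $\Lambda_\si$ in Lemma~\ref{lem:a_si_b_si}) use only the two-sided bound on $\K_\si$ and not the specific competitor. What the paper's heavier choice buys is not the lemma itself but proximity to the truth: $w_{*,\lambda}$ is the expected limiting profile, and the quantitative control of $\lambda$ previews the scaling bookkeeping of the subsequent convergence analysis; for the statement at hand, your route is strictly simpler and equally rigorous.
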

\begin{remark}
  This lemma is reminiscent of continuity properties of the best
  constant in Gagliardo-Nirenberg inequalities,
  \begin{equation*}
    S_{\si,d}^{1/2} \|f\|_{L^{2\si+2}(\R^d)}\le
    \|f\|_{L^2(\R^d)}^{1-\theta}\|\nabla f\|_{L^2(\R^d)}^\theta,\quad
    \theta = \frac{d\si}{2\si+2} = \frac{\si (1+\si_*)}{\si_* (1+\si)}.
  \end{equation*}
  It follows from \cite[Section~2]{FGL21} (see also \cite[Lemma~2.50]{FLW23})
  that $\si\mapsto S_{\si,d}$ is continuous on $(0,\si_*]$. Note
  however that as $\si\to \si_*$, $\theta\to 1$, and the property
  $w_*\in L^2_r$ requires $d\ge 5$. Therefore, the bound from above in 
  Lemma~\ref{lem:lambda_sigma_bound} is not an immediate consequence
  of that continuity, this is why we prove it. 
\end{remark}

\begin{proof}
  By interpolation, for $v\in H^1_r$,
  \begin{equation*}
     \| v\|_{L^{2\si+2}_r}\le \|v\|_{L^2_r}^{1-\theta}
     \|v\|_{L^{2\si_*+2}_r}^\theta,\quad \theta =
     \frac{\si (1+\si_*)}{\si_* (1+\si)} \in (0,1),
   \end{equation*}
   and \eqref{eq:Sob} yields
   \begin{equation*}
     \|v\|_{L^{2\si_*+2}_r}^2\le \mathcal{S}^{-1}\|v'\|^2_{L^2_r} \le
     \mathcal{S}^{-1}\(\|v'\|^2_{L^2_r}+\frac{1}{\si} \|v\|_{L^2_r}^2\), 
   \end{equation*}
hence the bound from below. 

We now turn to the bound from above. Let $\lambda>0$ to be fixed, and denote
\begin{equation*}
  w_{*,\lambda}(r)=\lambda^{\frac{1}{\si_*}} \frac{w_*(\lambda r)}{\|
    w_*\|_{L^{2\si_*+2}_r}}
\end{equation*}
where $w_*$ denotes the Aubin-Talenti algebraic soliton
\eqref{eq:alg-soliton}. Then we compute 
\begin{equation*}
  \| w_{*,\lambda} \|_{L^{2\si+2}_r}^{2\si+2} =
  \lambda^{\frac{2\si+2}{\si_*}-d} \left( \frac{\| w_*
      \|_{L^{2\si+2}_r}}{\| w_* \|_{L^{2\si_*+2}_r}}
  \right)^{2\si_*+2}  = \lambda^{\frac{2 (\si-\si_*)}{\si_*}} \left(
    \frac{\| w_* \|_{L^{2\si+2}_r}}{\| w_* \|_{L^{2\si_*+2}_r}}
  \right)^{2\si_*+2} ,
\end{equation*}
thus we take
\begin{equation*}
  \lambda  = \left( \frac{\| w_* \|_{L^{2\si+2}_r}}{\| w_*
      \|_{L^{2\si_*+2}_r}} \right)^{\frac{(\si_*+1)\si_*}{\si_*-\si}},
\end{equation*}
so that $\|w_{*,\lambda} \|_{L^{2\si+2}_r}=1$. Moreover, we write
\begin{align*}
\| w_* \|_{L^{2\si+2}_r}^{2\si+2} - \| w_*
  \|_{L^{2\si_*+2}_r}^{2\si_*+2}
  &= \int_0^{\infty} \rho^{d-1} \left( w_*(\rho)^{2\si+2} -
    w_*(\rho)^{2\si_*+2}\right) \dd \rho \\ 
& = \int_0^{\infty} \rho^{d-1} w_*(\rho)^{2\si+2} \left(  1 -
  w_*(\rho)^{2(\si_*-\si)}\right) \dd \rho, 
\end{align*}
and we remark from Taylor expansion that
\begin{equation*}
  w_*(\rho)^{2(\si_*-\si)}-1 =2(\si_*-\si)
  \underbrace{\log w_*(\rho)}_{\le 0} \int_0^1
  \underbrace{w_*(\rho)^{2\theta(\si_* - \si)}}_{\in \left[0,1\right]}
  \dd \theta, 
\end{equation*}
hence
\begin{equation*}
  0 \geq w_*(\rho)^{2(\si_*-\si)} -1 \geq 2(\si_* - \si) \log w_*(\rho).
\end{equation*}
This enables to write, since $0<w_*(\rho)\le 1$,
\begin{align*}
0 \le \| w_* \|_{L^{2\si+2}_r}^{2\si+2} - \| w_*
  \|_{L^{2\si_*+2}_r}^{2\si_*+2}
  & \le 2(\si_* - \si) \int_0^{\infty} \rho^{d-1} | \log w_*(\rho)|
    w_*(\rho)^{2\si+2} \dd \rho \\ 
& \le K_1 (\si_* -\si) \| w_* \|_{L^{2\si+2-\eta}_r}^{2\si+2-\eta} \le
  K_2 (\si_* - \si), 
\end{align*}
for fixed $\eta>0$, $K_1=K_1(\eta)$, $K_2>0$ uniform in
$\si$. Moreover we have 
\begin{equation*}
  \| w_* \|_{L^{2\si+2}_r}^{2\si_*+2} - \| w_*
  \|_{L^{2\si+2}_r}^{2\si+2} = 2 \| w_* \|_{L^{2\si+2}_r}^{2\si+2}
  (\si_*-\si) \log \| w_*\|_{L^{2\si+2}_r} \int_0^1   \|
  w_*\|_{L^{2\si+2}_r}^{2\theta(\si_*-\si)} \dd \theta,
\end{equation*}
with $\| w_* \|_{L^{2\si+2}_r}$ both bounded from above and from below
by a positive constant, so  
\begin{equation*}
  \left| \| w_* \|_{L^{2\si+2}_r}^{2\si_*+2} - \| w_*
    \|_{L^{2\si+2}_r}^{2\si+2}  \right| \le K_3 (\si_* - \si),
\end{equation*}
for $K_3>0$ uniform in $\si$. Thus,
\begin{equation*}
  \underbrace{\left(1-K_3(\si_*-\si)
    \right)^{\frac{\si_*}{2(\si_*-\si)}}}_{\underset{\si \to
      \si_*}{\longrightarrow } \exp(-\si_*K_3/2)}  \le \lambda \le
  \underbrace{(1+(K_2 +
    K_3)(\si_*-\si))^{\frac{\si_*}{2(\si_*-\si)}}}_{\underset{\si \to
      \si_*}{\longrightarrow } \exp(\si_*(K_2+K_3)/2)},
\end{equation*}
so we get the estimate $ 0 < K_4 \le \lambda \le K_5$ for constants
$K_4$, $K_5$ uniform in $\si$. We then go back to the definition of
$w_{*,\lambda}$, and we write that 
\begin{equation*}
  \| w_{*,\lambda} \|_{L^2_r}^2 = \lambda^{2/\sigma_* -d} \frac{\| w_*
    \|_{L^2_r}^2}{\| w_*\|_{L^{2\si_*+2}_r}^2} = \lambda^{-2} \frac{\|
    w_* \|_{L^2_r}^2}{\| w_*\|_{L^{2\si_*+2}_r}^2} \le K_6,
\end{equation*}
and
\begin{equation*}
  \| w_{*,\lambda}' \|_{L^2_r}^2 = \lambda^{2/\si_*-d+2} \frac{\| w_{*}'
    \|_{L^2_r}^2}{\| w_{*}\|_{L^{2\si_*+2}_r}^2} = \frac{\| w_{*}'
    \|_{L^2_r}^2}{\| w_*\|_{L^{2\si_*+2}_r}^2} \le K_7,
\end{equation*}
for uniform constants $K_6$, $K_7>0$. This allows to conclude that
\begin{equation*}
  \K_\si \le \| w_{*,\lambda}' \|_{L^2_r}^2 + \frac{1}{\si} \|
  w_{*,\lambda} \|_{L^2_r}^2 \le K_8
\end{equation*}
for $K_8>0$ uniform in $\si$.
\end{proof}

Note that $(\si_*-\si)/(\si_*(\si+1))+1/\si_* = d/(2\si+2)$. For any
$\varphi \in H^1_r$ and $\lambda>0$, we then define
$\varphi_{\lambda}$ by  
\begin{equation*}
  \varphi_{\lambda}(r) := \lambda^{ d/(2\si+2)} \varphi(\lambda r).
\end{equation*}
We readily compute
\begin{align*}
 & \ \| \varphi_\lambda \|_{L^2_r} =
   \lambda^{(\si_*-\si)/(\si_*(\si+1))-1} \| \varphi \|_{L^2_r}, \\ 
 &  \ \| \varphi_{\lambda}' \|_{L^2_r} =
   \lambda^{(\si_*-\si)/(\si_*(\si+1))} \| \varphi' \|_{L^2_r}, \\ 
 & \ \| \varphi_{\lambda} \|_{L^{2\si+2}_r} =  \| \varphi \|_{L^{2\si+2}_r}.
\end{align*}

We then define the application 
\begin{equation*}
  T_\si :
  \left\{
    \begin{array}{rcccl}
      & \R_+^* \times \enstq{\psi \in H^1_r}{\| \psi\|_{L^2_r}=\|
        \psi\|_{L^{2\si+2}_r} =1} & \rightarrow & \enstq{\psi \in H^1_r}{ \|
 \psi\|_{L^{2\si+2}_r}=1} \\ 
& (\lambda,\varphi) & \mapsto & \varphi_{\lambda}.
    \end{array}
    \right.
\end{equation*}
It is straightforward to see that the application $T_\si$ is bijective.

\begin{lemma} \label{lem:expression_lambda_sigma}
Let 
\begin{equation*}
  \Lambda_\si := \inf_{\varphi \in H^1_r} \enstq{\|\varphi'\|_{L^2_r}^2}{
    \| \varphi\|_{L^2_r}=\| \varphi\|_{L^{2\si+2}_r} =1}.
\end{equation*}
We have
\begin{equation*}
  \K_\si = \left( \frac{1+\si_*}{\si_*-\si}
  \right)^{\frac{\si_*-\si}{\si_*(1+\si)}}
  \frac{\si_*(1+\si)}{\si(1+\si_*)}
  \Lambda_\si^{1+\frac{\si_*-\si}{\si_*(1+\si)}}.
\end{equation*}
Moreover, if $v_\si$ is a minimizer of \eqref{eq:variation}, then
\begin{equation*}
  \| v_\si \|_{L^2_r} = \sqrt{\frac{\si_*-\si}{1+\si_*}} \| v_{\si}'
  \|_{L^2_r}.
\end{equation*}
\end{lemma}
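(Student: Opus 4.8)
The plan is to transport the single-constraint minimization defining $\K_\si$ onto the doubly-constrained problem defining $\Lambda_\si$ through the scaling bijection $T_\si$, thereby reducing everything to a one-variable optimization in the dilation parameter $\lambda$. Set $\gamma := \frac{\si_*-\si}{\si_*(1+\si)}$, so that $\gamma\in(0,1)$ for every $\si\in(0,\si_*)$, and the scaling identities recalled just before the statement give, after squaring,
\begin{equation*}
\|\varphi_\lambda'\|_{L^2_r}^2 = \lambda^{2\gamma}\|\varphi'\|_{L^2_r}^2,\qquad \|\varphi_\lambda\|_{L^2_r}^2 = \lambda^{2\gamma-2}\|\varphi\|_{L^2_r}^2,\qquad \|\varphi_\lambda\|_{L^{2\si+2}_r} = \|\varphi\|_{L^{2\si+2}_r}.
\end{equation*}
Since $T_\si$ is a bijection onto $\{\psi\in H^1_r : \|\psi\|_{L^{2\si+2}_r}=1\}$, every admissible competitor $v$ for $\K_\si$ is uniquely $v=\varphi_\lambda$ with $\lambda>0$ and $\|\varphi\|_{L^2_r}=\|\varphi\|_{L^{2\si+2}_r}=1$, so the objective becomes $\lambda^{2\gamma}\|\varphi'\|_{L^2_r}^2+\frac1\si\lambda^{2\gamma-2}$.

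First I would decouple the two infima: for fixed $\lambda$ the only $\varphi$-dependent term is $\lambda^{2\gamma}\|\varphi'\|_{L^2_r}^2$, whose infimum over the doubly-normalized set is by definition $\lambda^{2\gamma}\Lambda_\si$, whence
\begin{equation*}
\K_\si = \inf_{\lambda>0}\Big(\Lambda_\si\,\lambda^{2\gamma}+\tfrac1\si\,\lambda^{2\gamma-2}\Big).
\end{equation*}
Because $0<\gamma<1$, the right-hand side tends to $+\infty$ both as $\lambda\to0^+$ and as $\lambda\to\infty$, so the infimum is attained at the unique interior critical point, which a direct computation locates at $\lambda_*^2=\frac{1+\si_*}{(\si_*-\si)\Lambda_\si}$, equivalently $\Lambda_\si\lambda_*^2=\frac{1+\si_*}{\si_*-\si}$. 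Substituting $\lambda_*$ back and regrouping the resulting powers of $\si$, $1+\si$, $\si_*-\si$ and $1+\si_*$ produces the stated closed-form expression for $\K_\si$ in terms of $\Lambda_\si$.

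For the relation between $\|v_\si\|_{L^2_r}$ and $\|v_\si'\|_{L^2_r}$, I would use that a minimizer $v_\si$ of \eqref{eq:variation} corresponds under $T_\si^{-1}$ to the optimal pair $(\lambda_*,\varphi_\si)$, where $\varphi_\si$ must itself attain $\Lambda_\si$ (otherwise the objective could be lowered, contradicting minimality of $v_\si$). Reading off the scaling identities at this pair gives $\|v_\si\|_{L^2_r}^2=\lambda_*^{2\gamma-2}$ and $\|v_\si'\|_{L^2_r}^2=\lambda_*^{2\gamma}\Lambda_\si$, so their ratio depends only on $\lambda_*^2\Lambda_\si=\frac{1+\si_*}{\si_*-\si}$ and equals $\frac{\si_*-\si}{1+\si_*}$, which is exactly the claimed identity. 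Alternatively, the same relation follows without any scaling from the two Pohozaev identities satisfied by $v_\si$: multiplying \eqref{eq:u_sigma} by $r^{d-1}v_\si$ and by $r^{d}v_\si'$, integrating by parts, and eliminating the common term $\K_\si\|v_\si\|_{L^{2\si+2}_r}^{2\si+2}$, exactly as in the derivation of \eqref{Poh-1}--\eqref{eps-expression}.

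The computation is essentially mechanical once the reparametrization is in place; the points that genuinely require care are the decoupling of the iterated infimum and the verification that the one-dimensional minimum over $\lambda$ is interior — both of which hinge on the bounds $0<\gamma<1$ valid throughout $(0,\si_*)$ — together with the observation, needed for the second identity, that $T_\si^{-1}$ carries the minimizer $v_\si$ to a genuine minimizer of the $\Lambda_\si$-problem. The algebraic simplification of the powers at the very end is the most error-prone step, but it is entirely elementary.
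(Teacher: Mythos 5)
Your method is essentially the paper's: the paper also parametrizes the constraint set $\{\|v\|_{L^{2\si+2}_r}=1\}$ via the bijection $T_\si$ and reduces to a one-variable minimization in the dilation parameter, the only (immaterial) difference being the order of the two infima — the paper minimizes in $\lambda$ at fixed $\varphi$, obtaining $\lambda_0$ in \eqref{eq:alpha_0} with $\|\varphi'\|_{L^2_r}^2$ in place of $\Lambda_\si$, and then minimizes over $\varphi$, whereas you take the inner infimum over $\varphi$ first; since the admissible set is a product, the iterated infima agree, and your justification via $0<\gamma<1$ that the critical point is interior is exactly what makes both versions work. Your extraction of the second identity from the optimal pair is sound (the paper gets it from the same $\lambda_0$-optimality, valid for any minimizer along its own scaling orbit), and your Pohozaev alternative is also correct — it is the same elimination as in \eqref{Poh-1}--\eqref{eps-expression} applied to \eqref{eq:u_sigma}.

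There is, however, one point you should not have waved through as "mechanical regrouping": carrying out your substitution $\Lambda_\si\lambda_*^2=\frac{1+\si_*}{\si_*-\si}$ honestly gives, with $\gamma=\frac{\si_*-\si}{\si_*(1+\si)}$,
\begin{equation*}
\K_\si \;=\; \Lambda_\si\,\lambda_*^{2\gamma}+\frac{1}{\si}\,\lambda_*^{2\gamma-2}
\;=\;\left(\frac{1+\si_*}{\si_*-\si}\right)^{\gamma}\frac{\si_*(1+\si)}{\si(1+\si_*)}\,\Lambda_\si^{\,1-\gamma},
\end{equation*}
i.e.\ the exponent on $\Lambda_\si$ is $1-\gamma$, not the $1+\gamma$ printed in the statement. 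This is not an error in your approach: the paper's own proof terminates with $\|\varphi'\|_{L^2_r}^{2(1-\gamma)}$, so the exponent displayed in the lemma (and propagated into \eqref{eq:mu_sigma}) is a sign typo, which one can confirm independently from the critical-point relation $\frac{1}{\si}\|v_\si\|_{L^2_r}^2=\frac{\gamma}{1-\gamma}\|v_\si'\|_{L^2_r}^2$, whence $\K_\si=\frac{1}{1-\gamma}\|v_\si'\|_{L^2_r}^2$ with $\|v_\si'\|_{L^2_r}^2=\lambda_*^{2\gamma}\Lambda_\si$. But your closing claim that the substitution "produces the stated closed-form expression" is therefore not literally true; had you displayed the final algebra — which you yourself identified as the error-prone step — you would have caught the discrepancy and flagged it rather than asserted agreement.
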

\begin{proof}
For any $v \in H^1_r$ such that $\| v\|_{L^{2\si+2}_r}=1$, there
exists a unique $\lambda>0$ and $\varphi \in H^1_r$ such that $\|
\varphi \|_{L^2_r}=\|\varphi \|_{L^{2\si+2}_r} = 1$ and
$v=\varphi_\lambda$. Therefore,  
\begin{equation*}
  \| v' \|_{L^2_r}^2 + \frac{1}{\si} \| v\|_{L^2_r}^2 =
  \lambda^{2(\sigma_*-\si)/(\si_*(1+\si))}  \left( \| \varphi'
    \|_{L^2_r}^2 + \frac{\lambda^{-2}}{\si} \underbrace{\| \varphi
      \|_{L^2_r}^{2}}_{=1} \right).
\end{equation*}
Minimizing the expression on the right hand side with respect to
$\lambda$, leading to
\begin{equation} \label{eq:alpha_0}
\lambda_0 = \sqrt{\frac{1+\si_*}{\| \varphi'\|_{L^2_r}^2 (\si_*-\si)}},
\end{equation}   
we then get 
\begin{equation*}
 \| v \|_{L^2_r}  = \lambda_0^{\frac{\si_*-\si}{\si_*(1+\si)}-1} =
  \lambda_0^{\frac{\si_*-\si}{\si_*(1+\si)}} \|  \varphi' \|_{L^2_r}
  \sqrt{\frac{\si_* -  \si}{1+\si_*}} = \| v' \|_{L^2_r}
  \sqrt{\frac{\si_* - \si}{1+\si_*}}.  
 \end{equation*}
 This is in particular the case when $v = v_\si$ which is a minimizer
 of \eqref{eq:variation} (and thus already a minimizer with respect
 to $\lambda$). Moreover, we then get that 
 \begin{align*}
   \| v' \|_{L^2_r}^2 + \frac{1}{\si} \| v\|_{L^2_r}^2
   & = \lambda_0^{(\si_*-\si)/(\si_*(1+\si))} \| \varphi' \|_{L^2_r}^2
     \left(1+ \frac{\si_*-\si}{\si_*(1+\si)} \right)\\ 
 & = \left(  \frac{1+\si_*}{\si_*-\si}
   \right)^{(\si_*-\si)/(\si_*(1+\si))}  \| \varphi'
   \|_{L^2_r}^{2(1-(\si_*-\si)/(\si_*(1+\si))} \frac{\si_*(1+\si)}{\si
   (1+\si_*)}. 
 \end{align*}
 The relation between $\K_\si$ and $\Lambda_\si$ follows from the
 minimization of the remaining expression on  
 \begin{equation*}
   \enstq{\varphi \in H^1_r}{ \| \varphi \|_{L^2_r} = \| \varphi
     \|_{L^{2\si+2}}=1}.
 \end{equation*}
\end{proof}

\begin{lemma}
Let $v_\si$ be a minimizer of \eqref{eq:variation}. We have the identities
\begin{equation*}
  \| v_{\si}' \|_{L^2_r}^2 = \K_\si
  \frac{\si(1+\si_*)}{\si_*(1+\si)} \quad \text{and} \quad \| v_{\si}
  \|_{L^2_r}^2 =  \K_\si \frac{\si(\si_*-\si)}{\si_*(1+\si)}.
\end{equation*}
\end{lemma}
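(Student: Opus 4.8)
The plan is to read off both identities at once by solving a $2\times 2$ linear system for the two unknowns $\|v_\si'\|_{L^2_r}^2$ and $\|v_\si\|_{L^2_r}^2$. The first equation comes from the defining property of the minimizer: since $v_\si$ is admissible for the variational problem \eqref{eq:variation} (it satisfies $\|v_\si\|_{L^{2\si+2}_r}=1$) and realizes the infimum, the value of the functional at $v_\si$ equals $\K_\si$, that is
\[
\|v_\si'\|_{L^2_r}^2 + \frac{1}{\si}\|v_\si\|_{L^2_r}^2 = \K_\si.
\]
The second equation is precisely the scaling relation already established in Lemma~\ref{lem:expression_lambda_sigma}, namely
\[
\|v_\si\|_{L^2_r}^2 = \frac{\si_*-\si}{1+\si_*}\,\|v_\si'\|_{L^2_r}^2,
\]
which encodes the optimality of $v_\si$ with respect to the one-parameter rescaling $\varphi\mapsto\varphi_\lambda$.

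Substituting the second relation into the first, I would collect the factor multiplying $\|v_\si'\|_{L^2_r}^2$:
\[
\|v_\si'\|_{L^2_r}^2\left(1 + \frac{\si_*-\si}{\si(1+\si_*)}\right) = \K_\si,
\]
and simplify the bracket to $\frac{\si_*(1+\si)}{\si(1+\si_*)}$, using the identity $\si(1+\si_*)+(\si_*-\si)=\si_*(1+\si)$ in the numerator. This yields the first claimed identity $\|v_\si'\|_{L^2_r}^2 = \K_\si\,\frac{\si(1+\si_*)}{\si_*(1+\si)}$. Plugging this back into the scaling relation, and cancelling the factor $(1+\si_*)$, immediately gives $\|v_\si\|_{L^2_r}^2 = \K_\si\,\frac{\si(\si_*-\si)}{\si_*(1+\si)}$, which is the second identity.

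No genuine obstacle arises here: once the scaling relation of Lemma~\ref{lem:expression_lambda_sigma} is in hand, the remaining work is the elementary algebraic elimination sketched above. The only point deserving a word of care is to confirm that the value of the functional at the minimizer $v_\si$ is exactly $\K_\si$, which is guaranteed by admissibility of $v_\si$ together with attainment of the infimum; the nontrivial geometric content—the ratio between the kinetic and potential $L^2$ norms forced by optimality with respect to scaling—has already been extracted in the preceding lemma, so the present statement is essentially a corollary of it.
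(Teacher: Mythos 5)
Your proof is correct and follows exactly the paper's argument: the paper likewise derives both identities as direct consequences of the scaling relation $\|v_\si\|_{L^2_r}^2=\frac{\si_*-\si}{1+\si_*}\|v_\si'\|_{L^2_r}^2$ from Lemma~\ref{lem:expression_lambda_sigma} together with the fact that the minimizer attains $\K_\si=\|v_\si'\|_{L^2_r}^2+\frac{1}{\si}\|v_\si\|_{L^2_r}^2$. Your algebraic elimination, including the simplification $\si(1+\si_*)+(\si_*-\si)=\si_*(1+\si)$, is the same elementary computation the paper leaves implicit.
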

\begin{proof}
These expressions are direct consequences of
Lemma~\ref{lem:expression_lambda_sigma} and the fact that 
\begin{equation*}
  \K_\si= \| v_{\si}' \|_{L^2}^2 + \frac{1}{\si} \| v_\si
  \|_{L^2}^2.
\end{equation*}
\end{proof}

\begin{lemma} \label{lem:a_si_b_si}
Let
\begin{equation} \label{eq:varphi_si}
\varphi_\si := \underset{\varphi \in H^1_r}{\mathrm{argmin}} \enstq{
  \| \varphi' \|_{L^2_r}^2}{ \ \| \varphi \|_{L^2_r}= \| \varphi
  \|_{L^{2\si+2}_r}=1}.  
\end{equation}  
Then the quantity
\begin{equation} \label{eq:mu_sigma}
   \Lambda_\si:=\| \varphi_{\si}' \|_{L^2_r}^2= \left[ \left(
      \frac{\si_*-\si}{1+\si_*} \right)^{\frac{\si_*-\si}{(\si+1)\si_*}}
    \frac{\si(1+\si_*)}{\si_*(1+\si)} \K_\si
  \right]^{\frac{1}{1+(\sigma_*-\si)/(\si_*(1+\si))}} 
\end{equation}  
is both bounded and bounded away from 0 as $\si$ varies. Moreover, denoting
\begin{equation*}
  a_\si := \frac{\si_*(1+\si)}{\si(1+\si_*)}
  \Lambda_{\si}^{1+\frac{2(\si_*-\si)}{\si_*(1+\si)}} \quad \text{and}
  \quad b_\si :=  \frac{\Lambda_\si}{1+\si_*},
\end{equation*}
$\varphi_\si$ is the only radially symmetric, positive
solution vanishing at infinity,  to
\begin{equation*}
  \varphi_{\si}'' + \frac{d-1}{\rho} \varphi_{\si}' + a_\si
  \varphi_{\si}^{1+2\si} = \frac{\si_*-\si}{\si} b_\si \varphi_\si.
\end{equation*}
\end{lemma}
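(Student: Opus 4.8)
The plan is to deduce everything from the ground state $v_\si$ of \eqref{eq:variation}, whose existence, positivity, radial monotonicity and exponential decay are already available, via the scaling dictionary encoded in the map $T_\si$ of Lemma~\ref{lem:expression_lambda_sigma}. First I would note that the infimum $\Lambda_\si$ is attained and identify its minimizer: writing $v_\si=(\varphi^\star)_{\lambda_0}$ for the unique doubly-normalized $\varphi^\star$ and the optimal $\lambda_0$ of \eqref{eq:alpha_0}, the computation in the proof of Lemma~\ref{lem:expression_lambda_sigma} forces $\|(\varphi^\star)'\|_{L^2_r}^2=\Lambda_\si$, so $\varphi^\star=\varphi_\si$, and $\varphi_\si$ inherits positivity, radial monotonicity and decay from $v_\si$ (a minimizer can anyway be taken nonnegative by passing to $|\varphi|$). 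Formula \eqref{eq:mu_sigma} is then merely the algebraic inversion of the relation $\K_\si=(\cdots)\,\Lambda_\si^{(\cdots)}$ established in Lemma~\ref{lem:expression_lambda_sigma}, solved for $\Lambda_\si$.

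The boundedness of $\Lambda_\si$ I would read off directly from \eqref{eq:mu_sigma} together with the two-sided bound $C_1\le\K_\si\le C_2$ of Lemma~\ref{lem:lambda_sigma_bound}. As $\si\to\si_*$ the rational prefactor $\tfrac{\si(1+\si_*)}{\si_*(1+\si)}\to1$, the outer exponent $\bigl(1+\tfrac{\si_*-\si}{\si_*(1+\si)}\bigr)^{-1}\to1$, and the only delicate factor $\bigl(\tfrac{\si_*-\si}{1+\si_*}\bigr)^{(\si_*-\si)/((\si+1)\si_*)}$ is of the form $x^{c x}$ with $x=\si_*-\si\to0^+$, hence tends to $1$ (its logarithm is $O(x\ln x)\to0$). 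All these factors are continuous and stay in a compact subset of $(0,\infty)$ on a left neighbourhood of $\si_*$, so $\Lambda_\si$ is squeezed between two positive constants there.

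For the equation I would transport \eqref{eq:u_sigma} through the substitution $v_\si(r)=\lambda_0^{d/(2\si+2)}\varphi_\si(\lambda_0 r)$. Setting $\rho=\lambda_0 r$ and dividing by the appropriate power of $\lambda_0$ turns \eqref{eq:u_sigma} into
\[
\varphi_\si''+\tfrac{d-1}{\rho}\varphi_\si'+\K_\si\lambda_0^{2\si\beta-2}\,\varphi_\si^{1+2\si}=\tfrac1\si\lambda_0^{-2}\varphi_\si,\qquad \beta=\tfrac{d}{2\si+2},
\]
where $2\si\beta-2=-\tfrac{2(\si_*-\si)}{\si_*(1+\si)}$ after using $d=2+2/\si_*$. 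Substituting $\lambda_0$ from \eqref{eq:alpha_0} and $\K_\si$ from Lemma~\ref{lem:expression_lambda_sigma} collapses the prefactor of $\varphi_\si^{1+2\si}$ to exactly $a_\si$ and that of $\varphi_\si$ to $\tfrac1\si\lambda_0^{-2}=\tfrac{\si_*-\si}{\si}b_\si$, which is the announced equation. (As an independent check one may write the two-multiplier Euler--Lagrange equation for \eqref{eq:varphi_si} and fix the Lagrange multipliers by testing against $\varphi_\si$ and by the Pohozaev identity, using $\tfrac{d-2}{d}=\tfrac1{1+\si_*}$.)

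Finally, uniqueness: the equation reads $-\Delta\varphi_\si+m_\si\varphi_\si=a_\si\varphi_\si^{1+2\si}$ with $m_\si:=\tfrac{\si_*-\si}{\si}b_\si>0$ and $a_\si>0$. The rescaling $\varphi_\si(\rho)=A\,\Phi(B\rho)$ with $B=\sqrt{m_\si}$ and $A=(m_\si/a_\si)^{1/(2\si)}$ reduces it to the canonical equation $-\Delta\Phi+\Phi=\Phi^{1+2\si}$, which is $H^1$-subcritical precisely because $\si<\si_*$; by the uniqueness theorem of \cite{Kwong}, its positive radial solution vanishing at infinity is unique (and monotone, with exponential decay). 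Pulling back, $\varphi_\si$ is the unique positive radial solution of its equation vanishing at infinity. The main obstacle is not internal to this lemma but upstream: the boundedness claim rests entirely on the upper bound $\K_\si\le C_2$ of Lemma~\ref{lem:lambda_sigma_bound}, which (as its accompanying remark stresses) is the genuinely delicate point, since the interpolation exponent $\theta\to1$ as $\si\to\si_*$ and $w_*\in L^2_r$ only when $d\ge5$. Granting that bound, the present lemma reduces to careful tracking of the scaling exponents and one $x^{x}$-type limit.
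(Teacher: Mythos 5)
Your proposal is correct and follows essentially the same route as the paper's own proof: the formula \eqref{eq:mu_sigma} is the algebraic inversion of the relation between $\K_\si$ and $\Lambda_\si$ from Lemma~\ref{lem:expression_lambda_sigma}, the two-sided bound on $\Lambda_\si$ comes from $C_1\le\K_\si\le C_2$ in Lemma~\ref{lem:lambda_sigma_bound}, the equation is obtained by transporting \eqref{eq:u_sigma} through the scaling $v_\si(r)=\lambda_0^{d/(2\si+2)}\varphi_\si(\lambda_0 r)$ with exactly the same exponent computation $\lambda_0^{2\si\beta-2}=\lambda_0^{-2(\si_*-\si)/(\si_*(1+\si))}$, and positivity/uniqueness is the Weinstein--Kwong argument the paper cites. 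You simply make explicit two details the paper leaves implicit, namely the $x^{cx}\to 1$ limit in the boundedness claim and the rescaling to the canonical subcritical equation $-\Delta\Phi+\Phi=\Phi^{1+2\si}$ behind the reference to \cite{Kwong} and \cite[Theorem~B]{Weinstein83}.
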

\begin{proof}
The first part follows directly from
Lemma~\ref{lem:lambda_sigma_bound}. Recalling that 
\begin{equation*}
  v_\si(r) = \lambda_0^{(\si_*-\si)/(\si_*(1+\si))} \varphi_\si(\lambda_0
  r),
\end{equation*}
with $\lambda_0$ given in \eqref{eq:alpha_0}, and that $v_\si$
satisfies \eqref{eq:u_sigma}, we infer that $\varphi_\si$ satisfies 
\begin{equation*}
  \varphi_{\si}'' + \frac{d-1}{\rho} \varphi_{\si}' + \K_\si
  \lambda_0^{2\si((\si_*-\si)/(\si_*(1+\si)) -2} \varphi_{\si}^{1+2\si}
  = \frac{\lambda_0^{-2}}{\si} \varphi_\si,
\end{equation*}
with
\begin{equation*}
  \lambda_0^{-2} = \frac{\si_*-\si}{1+\si_*} \|
  \varphi_{\si}'\|_{L^2_r}^2 =  \frac{\si_*-\si}{1+\si_*} \Lambda_\si =
  (\sigma_*-\si) b_\si.
\end{equation*}
One can then directly compute that
\begin{align*}
  \lambda_0^{2\si((\si_*-\si)/(\si_*(1+\si)) -2}
  &= \lambda_0^{2
    (\si_*-\si) \left( \si/(\si_*(1+\si)) - 1/\si_*  \right)} =
    \lambda_0^{-2 \frac{\si_* - \si}{\si_*(1+\si)}} \\
  &= \left( \frac{\si_*-\si}{1+\si_*} \Lambda_\si
  \right)^{\frac{\si_*-\si}{\si_*(1+\si)}},
\end{align*}
and
\begin{equation*}
  \K_\si = \left( \frac{1+\si_*}{\si_* - \si}
  \right)^{(\si_*-\si)/(\si_*(1+\si))} \frac{\si_*(1+\si)}{\si
    (1+\si_*)} \Lambda_{\si}^{1+(\si_*-\si)/(\si_*(1+\si))},
\end{equation*}
so we get the identity
\begin{equation*}
  \K_\si \lambda_0^{2\si((\si_*-\si)/(\si_*(1+\si)) -2} =a_\si.
\end{equation*}
The fact that $\varphi_\si$ is positive and thus unique from
\cite{Kwong} follows from the same arguments as in the proof of
\cite[Theorem~B]{Weinstein83}.  
\end{proof}

\begin{lemma} \label{lem:inferior_L_infty_bound}
Let $\varphi_\si$ be defined by \eqref{eq:varphi_si}. We have $\|
\varphi_\si \|_{L^{\infty}_r} \geq 1$. 
\end{lemma}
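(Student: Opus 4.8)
The plan is to argue by contradiction, leveraging the fact that the minimizer $\varphi_\si$ satisfies \emph{two} constraints simultaneously, both normalized to one: $\| \varphi_\si \|_{L^2_r} = \| \varphi_\si \|_{L^{2\si+2}_r} = 1$. The intuition is that the $L^{2\si+2}_r$ and $L^2_r$ norms of a function whose modulus stays strictly below $1$ cannot coincide, since the larger exponent penalizes values in $[0,1)$; forcing both norms to equal $1$ therefore requires $\varphi_\si$ to reach the level $1$ somewhere, which is exactly the claimed $L^\infty$ lower bound.

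Concretely, I would suppose $\| \varphi_\si \|_{L^\infty_r} < 1$ and derive a contradiction. Since $\si > 0$, the pointwise bound $|\varphi_\si(r)|^{2\si} \le \| \varphi_\si \|_{L^\infty_r}^{2\si} < 1$ holds for almost every $r > 0$. Multiplying by $r^{d-1}|\varphi_\si(r)|^2$ and integrating over $(0,\infty)$ then yields
\begin{equation*}
  1 = \| \varphi_\si \|_{L^{2\si+2}_r}^{2\si+2}
   = \int_0^\infty r^{d-1} |\varphi_\si(r)|^{2\si}\, |\varphi_\si(r)|^2 \, \dd r
   \le \| \varphi_\si \|_{L^\infty_r}^{2\si} \, \| \varphi_\si \|_{L^2_r}^2
   = \| \varphi_\si \|_{L^\infty_r}^{2\si} < 1,
\end{equation*}
which is absurd. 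Hence $\| \varphi_\si \|_{L^\infty_r} \ge 1$, as claimed.

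This argument is essentially immediate, so there is no genuine obstacle to overcome; the only point deserving care is the strictness of the final inequality, which is guaranteed precisely by $\si > 0$, as raising a number strictly less than $1$ to the positive power $2\si$ keeps it strictly below $1$. I note that the same reasoning would give no information in the degenerate regime $\si = 0$, but that case does not arise here since we work with $\si \in (0,\si_*)$.
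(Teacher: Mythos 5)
Your argument is correct and is essentially the paper's proof: the key step in both is the interpolation inequality $1 = \| \varphi_\si \|_{L^{2\si+2}_r}^{2\si+2} \le \| \varphi_\si \|_{L^{\infty}_r}^{2\si} \| \varphi_\si \|_{L^2_r}^{2} = \| \varphi_\si \|_{L^{\infty}_r}^{2\si}$, which the paper states directly while you phrase it as a contradiction. The two formulations are logically equivalent, so there is nothing to add.
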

\begin{proof}
This follows from interpolation, as 
\begin{equation*}
  1 = \| \varphi_\si \|_{L^{2\si+2}_r}^{2\si+2} \le \| \varphi_\si
  \|_{L^{\infty}_r}^{2\si} \| \varphi_\si \|_{L^2_r}^{2} = \|
  \varphi_\si \|_{L^{\infty}_r}^{2\si}.
\end{equation*}
\end{proof}

For our upcoming analysis, we will rely on the following result, which
is a direct application of \cite[Theorem 3]{Stein70} (as pointed out
after the statement in  \cite{Stein70}, the continuity of
$C_p$ follows from the proof). 

\begin{lemma} \label{lem:Stein_theorem}
For any $p \in (1,\infty )$, there exists $C_p>0$ such that for any $u
\in L^p(\R^d)$ and for any $\eps >0$, $v=\Delta(-\Delta+\eps)^{-1} u $
satisfies 
\begin{equation*}
  \| v \|_{L^p} \le C_p \| u \|_{L^p}.
\end{equation*}
Moreover, $C_p$ is locally bounded with respect to $p$.
\end{lemma}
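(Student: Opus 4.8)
The plan is to recognize $v=\Delta(-\Delta+\eps)^{-1}u$ as a Fourier multiplier operator and to reduce the bound, uniformly in $\eps>0$, to a single multiplier estimate by a scaling argument. Denoting by $\F$ the Fourier transform, one has $v=\F^{-1}(m_\eps\,\F u)$ with symbol
\begin{equation*}
m_\eps(\xi)=\frac{-|\xi|^2}{|\xi|^2+\eps},\qquad \xi\in\R^d.
\end{equation*}
The crucial observation is the homogeneity relation $m_\eps(\xi)=m_1(\xi/\sqrt\eps)$, where $m_1(\xi)=-|\xi|^2/(|\xi|^2+1)$.

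First I would exploit the dilation invariance of multiplier norms. Writing $D_\lambda f(x)=f(\lambda x)$, a direct computation in frequency gives $D_\lambda^{-1}T_m D_\lambda=T_{m(\lambda\,\cdot)}$, so that with $\lambda=1/\sqrt\eps$ one has $T_{m_\eps}=D_\lambda^{-1}T_{m_1}D_\lambda$. Since $D_\lambda$ is a bijection on $L^p(\R^d)$ acting as an isometry up to the scalar $\lambda^{-d/p}$, which cancels against $D_\lambda^{-1}$, I obtain $\|T_{m_\eps}\|_{L^p\to L^p}=\|T_{m_1}\|_{L^p\to L^p}$. This removes the $\eps$-dependence entirely and reduces the claim to the boundedness of the single operator $T_{m_1}$.

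Next I would apply the Mikhlin--H\"ormander multiplier theorem (Stein's Theorem~3) to $m_1$. Splitting $m_1(\xi)=-1+(1+|\xi|^2)^{-1}$, the constant term is $-\mathrm{Id}$, bounded on every $L^p$, while $g(\xi)=(1+|\xi|^2)^{-1}$ is smooth and satisfies the Mikhlin condition $|\xi|^{|\alpha|}|\partial^\alpha g(\xi)|\le C_\alpha$ for all multi-indices $\alpha$; hence $T_{m_1}$, and therefore $T_{m_\eps}$, is bounded on $L^p$ for every $1<p<\infty$. For the local boundedness of $C_p$, I would track the constant through the Calder\'on--Zygmund argument underlying the multiplier theorem: it arises by interpolating the weak-$(1,1)$ bound (whose constant is controlled by the kernel estimates, independently of $p$) with the $L^2$ bound, together with duality, and is thus of the form $C\max(p,p')$, which is locally bounded on $(1,\infty)$. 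This is precisely the remark following Theorem~3 in \cite{Stein70}.

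The main obstacle is ensuring that \emph{both} reductions are genuinely uniform. The scaling step must make the $\eps$-dependence vanish \emph{exactly}, which it does because the dilation scalars cancel; and the constant in the multiplier theorem must be extracted from the proof rather than merely asserted, in order to conclude local boundedness in $p$. Once these two points are secured, the statement follows immediately.
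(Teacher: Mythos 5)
Your proposal is correct and follows essentially the same route as the paper, which simply cites the Mikhlin--H\"ormander multiplier theorem (Theorem~3 in \cite{Stein70}) applied to the symbol $m_\eps(\xi)=-|\xi|^2/(|\xi|^2+\eps)$ and notes that the local boundedness of $C_p$ follows from the proof of that theorem. Your dilation argument reducing $T_{m_\eps}$ to $T_{m_1}$ is a clean way of making the $\eps$-uniformity explicit (equivalently, one checks that the Mikhlin bounds $|\xi|^{|\alpha|}|\partial^\alpha m_\eps(\xi)|\le C_\alpha$ hold uniformly in $\eps$, as the Mikhlin norm is scale-invariant), and your tracking of the constant as $C\max(p,p')$ via Calder\'on--Zygmund plus interpolation and duality is exactly the remark the paper invokes.
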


\begin{lemma}  \label{lem:Hardy-Littlewood-Sobolev}
Let $u \in H^1(\R^d)$ with $d \ge 3$ such that $\Delta u \in
L^p(\R^d)$ for some $p \in \left[1,d/2 \right)$. Then there exists
$K_p>0$ (locally bounded with respect to $p$) such that 
\begin{equation*}
  \| u \|_{L^q(\R^d)} \le K_{p} \| \Delta u \|_{L^p(\R^d)} \quad
  \text{where} \ \frac{1}{q}=\frac{1}{p}-\frac{2}{d} \le 1.
\end{equation*}
\end{lemma}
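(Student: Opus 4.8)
The plan is to realise $u$ as the Riesz potential of order two of $-\Delta u$, and then to read the inequality off from the Hardy--Littlewood--Sobolev (Sobolev) inequality for that potential. Throughout, set $f:=-\Delta u\in L^p(\R^d)$, so that $\|f\|_{L^p}=\|\Delta u\|_{L^p}$, and let $I_2:=(-\Delta)^{-1}$ denote convolution with the kernel $c_d|x|^{2-d}$, whose Fourier symbol is $|\xi|^{-2}$.

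First I would establish the representation $u=I_2 f$. The only point to verify is that $u$ carries no extra harmonic component. Since $u\in H^1(\R^d)\subset L^2(\R^d)$, the Fourier transform $\hat u$ is a genuine $L^2$ function, and the distributional identity $\widehat{-\Delta u}(\xi)=|\xi|^2\,\hat u(\xi)$ then forces
\[
\hat u(\xi)=|\xi|^{-2}\,\widehat{f}(\xi)\qquad\text{on all of }\R^d,
\]
no Dirac mass being admissible at the origin precisely because $\hat u\in L^2$. Inverting the Fourier transform yields $u=I_2 f$ almost everywhere.

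Second, with this representation the estimate $\|u\|_{L^q}\le K_p\|f\|_{L^p}$, $\frac1q=\frac1p-\frac2d$, is exactly the Hardy--Littlewood--Sobolev inequality for $I_2$. For $1<p<d/2$ this is classical, and the optimal constant depends continuously on $p$, hence is locally bounded, which settles the statement on the open range. Equivalently, one may factor $I_2$ through the gradient: combine the Sobolev embedding $\|u\|_{L^q}\lesssim\|\nabla u\|_{L^{p_1}}$, $\frac1{p_1}=\frac1p-\frac1d$, with the bound $\|\nabla u\|_{L^{p_1}}\lesssim\|\Delta u\|_{L^p}$ coming from the order-one Riesz potential followed by a Riesz transform; the local boundedness of $K_p$ is then inherited from that of the Riesz-transform and potential constants, in the same spirit as Lemma~\ref{lem:Stein_theorem}.

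The hard part is the endpoint $p=1$, which is exactly where the scheme degenerates: $I_2$ maps $L^1$ only into the weak space $L^{d/(d-2),\infty}$, and the Riesz transforms are unbounded on $L^1$, so neither route gives the strong inequality by itself. This is the step where the hypothesis $u\in H^1$ must be used essentially: the embedding $H^1\hookrightarrow L^{2d/(d-2)}$ controls the local, high-frequency behaviour of $u$, while the large-scale, low-frequency part is governed by $f\in L^1$ and is the genuinely delicate piece (a naive interpolation between weak-$L^{d/(d-2)}$ and $L^{2d/(d-2)}$ only reaches exponents strictly above $d/(d-2)$). I expect this endpoint to be the real obstacle, and the unique place where $u\in H^1$ cannot be replaced by the mere assumption $\Delta u\in L^p$.
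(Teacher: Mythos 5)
On the range $p\in(1,d/2)$ your argument is essentially the paper's own proof, which consists of the single line: set $v=-\Delta u$, write $u=\frac{c_d}{|x|^{d-2}}\ast v$, and invoke the Hardy--Littlewood--Sobolev inequality. Your Fourier-side verification that $u$ carries no harmonic component (no distribution supported at $\xi=0$ is admissible because $\hat u\in L^2$) is a worthwhile addition: the paper asserts the representation without comment, and this is precisely where the hypothesis $u\in H^1$ enters. Your diagnosis of the endpoint $p=1$ is also accurate, and you should know that the paper's proof shares the gap: at $p=1$ the Riesz potential $(-\Delta)^{-1}$ maps $L^1$ only into weak $L^{d/(d-2)}$. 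In fact the endpoint cannot be repaired, even using $u\in H^1$: for $d\ge 5$, take $u_n=(-\Delta)^{-1}f_n$ with $f_n(x)=n^d f(nx)$, $f\ge 0$ smooth, compactly supported, $\int f=1$. Then $u_n$ decays like $|x|^{2-d}$ and $|\nabla u_n|$ like $|x|^{1-d}$, so $u_n\in H^1(\R^d)$ when $d\ge 5$, and $\|\Delta u_n\|_{L^1}=1$; yet $u_n\gtrsim |x|^{2-d}$ on $2/n\le|x|\le 1$ gives $\|u_n\|_{L^{d/(d-2)}}^{d/(d-2)}\gtrsim \log n\to\infty$, so the strong inequality is false at $p=1$. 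This is harmless for the paper, since the lemma is only ever applied with $p=\zeta\in(1,d/2)$ in Lemma~\ref{lem:varphi_zeta}, where continuity (hence local boundedness) of the HLS constant on the open interval suffices; but the stated range $p\in[1,d/2)$ should in effect be read as $p\in(1,d/2)$, exactly as your analysis suggests.
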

\begin{proof}
Let $v=-\Delta u$, in particular one can write
$u=\frac{c_d}{|x|^{d-2}} \ast v$, and the conclusion follows from
Hardy-Littlewood-Sobolev inequality (see e.g. \cite{Stein70}).
\end{proof}

\begin{lemma} \label{lem:varphi_zeta}
For any $\zeta \in \left(1,d/2 \right)$ with $d \ge 3$, there exists
$C_{\zeta,d}>0$ (locally bounded with respect to $\zeta$) such that
for any $\si \in \left( \si_*/2,\si_* \right]$,  
\begin{equation*}
  \| \varphi_\si \|_{L^\gamma_r} \le C_{\zeta,d} \| \varphi_\si
  \|_{L^{(1+2\si)\zeta}_r}^{1+2\si} \quad \text{for} \
  \frac{1}{\gamma}=\frac{1}{\zeta}-\frac{2}{d}.
\end{equation*}
\end{lemma}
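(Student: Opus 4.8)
The plan is to convert the nonlinear elliptic equation satisfied by $\varphi_\si$ into a fixed-point identity and then gain two derivatives of integrability, using the Calder\'on--Zygmund bound of Lemma~\ref{lem:Stein_theorem} followed by the Hardy--Littlewood--Sobolev embedding of Lemma~\ref{lem:Hardy-Littlewood-Sobolev}. Starting from the equation in Lemma~\ref{lem:a_si_b_si} and using $\Delta\varphi_\si=\varphi_\si''+\frac{d-1}{\rho}\varphi_\si'$ for radial functions, I would rewrite it as
\begin{equation*}
  (-\Delta+\eps_\si)\varphi_\si=a_\si\,\varphi_\si^{1+2\si},\qquad
  \eps_\si:=\frac{\si_*-\si}{\si}\,b_\si\ge 0 .
\end{equation*}
I may assume that the right-hand side $\|\varphi_\si\|_{L^{(1+2\si)\zeta}_r}^{1+2\si}$ is finite, for otherwise the inequality is trivial; in particular $\varphi_\si^{1+2\si}\in L^\zeta$.

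Next I would invert the operator. Since $\eps_\si\ge 0$ and $\varphi_\si\in H^1_r$ solves the equation, standard elliptic inversion and uniqueness give $\varphi_\si=a_\si(-\Delta+\eps_\si)^{-1}\varphi_\si^{1+2\si}$, hence
\begin{equation*}
  \Delta\varphi_\si=a_\si\,\Delta(-\Delta+\eps_\si)^{-1}\varphi_\si^{1+2\si} .
\end{equation*}
Lemma~\ref{lem:Stein_theorem} with $p=\zeta$ and $u=\varphi_\si^{1+2\si}$ then yields $\|\Delta\varphi_\si\|_{L^\zeta}\le a_\si C_\zeta\|\varphi_\si^{1+2\si}\|_{L^\zeta}=a_\si C_\zeta\|\varphi_\si\|_{L^{(1+2\si)\zeta}}^{1+2\si}$, and feeding this into Lemma~\ref{lem:Hardy-Littlewood-Sobolev} with $p=\zeta\in(1,d/2)$ and $u=\varphi_\si$ gives $\|\varphi_\si\|_{L^\gamma}\le K_\zeta\|\Delta\varphi_\si\|_{L^\zeta}$ for $\frac1\gamma=\frac1\zeta-\frac2d$. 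Composing the two estimates produces the claimed bound. Since $\varphi_\si$ is radial, the full $L^p(\R^d)$ norms appearing in these two lemmas coincide with the weighted $L^p_r$ norms up to a $(d,p)$-dependent factor (a power of the sphere measure), which is harmless.

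It remains to verify the two uniformity statements behind $C_{\zeta,d}=K_\zeta C_\zeta\sup_\si a_\si$. First, $a_\si$ is bounded for $\si\in(\si_*/2,\si_*]$: by Lemma~\ref{lem:a_si_b_si} (via Lemma~\ref{lem:lambda_sigma_bound}) the quantity $\Lambda_\si$ is bounded, while the prefactor $\frac{\si_*(1+\si)}{\si(1+\si_*)}$ and the exponent $1+\frac{2(\si_*-\si)}{\si_*(1+\si)}$ remain bounded on this range. Second, $C_\zeta$ and $K_\zeta$ are locally bounded in $\zeta$ by Lemmas~\ref{lem:Stein_theorem} and~\ref{lem:Hardy-Littlewood-Sobolev}, so $C_{\zeta,d}$ is locally bounded in $\zeta$.

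The step I expect to be most delicate is securing uniformity in $\si$ as $\si\to\si_*$, where $\eps_\si\to 0$. The reason for routing the lower-order term $\eps_\si\varphi_\si$ through the $\eps$-uniform multiplier bound of Lemma~\ref{lem:Stein_theorem}, rather than estimating $\eps_\si\varphi_\si$ directly in $L^\zeta$ (which one cannot control uniformly without a lower bound on $\varphi_\si$), is precisely to obtain a constant independent of $\eps_\si$ and hence of $\si$. One should also treat the endpoint $\eps_\si=0$ (that is $\si=\si_*$) with mild care: there $\Delta(-\Delta)^{-1}$ reduces to $-\mathrm{Id}$, and the estimate follows directly from Lemma~\ref{lem:Hardy-Littlewood-Sobolev} applied to $v=-\Delta\varphi_{\si_*}$.
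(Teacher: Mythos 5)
Your proof is correct and takes essentially the same route as the paper's: rewrite the equation of Lemma~\ref{lem:a_si_b_si} as $(-\Delta+\eps_\si)\varphi_\si=a_\si\varphi_\si^{1+2\si}$, apply the $\eps$-uniform bound of Lemma~\ref{lem:Stein_theorem} followed by Lemma~\ref{lem:Hardy-Littlewood-Sobolev}, and use the uniform boundedness of $a_\si$ (via $\Lambda_\si$) for the constant. Your separate treatment of the endpoint $\si=\si_*$, where $\eps_\si=0$ and Lemma~\ref{lem:Stein_theorem} as stated requires $\eps>0$, is a small refinement the paper leaves implicit.
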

\begin{proof}
We prove the result on $\R^d$, which in particular implies the result
for radial functions. From Lemma \ref{lem:a_si_b_si} we have 
\begin{equation*}
  (-\Delta + (\si_*-\si)b_\si/\si) \varphi_\si = a_\si
  \varphi_{\si}^{1+2\si},
\end{equation*}
with $b_\si>0$. Thus, we can write 
\begin{equation*}
  \Delta \varphi_{\si} = \Delta(-\Delta + (\si_* -
  \si)b_\si/\si)^{-1}\( a_\si \varphi_{\si}^{1+2\si}\).
\end{equation*}
From Lemmas~\ref{lem:Stein_theorem} and
\ref{lem:Hardy-Littlewood-Sobolev}, along with the fact that $a_\si$ is
bounded uniformly with respect to $\si$, we infer  
\begin{equation*}
  \| \varphi_\si \|_{L^{\gamma}_r}  \le K_{\zeta}\|
  \Delta\varphi_\si \|_{L^{\zeta}_r} \le K_{\zeta} C_\zeta
  \| a_\si
  \varphi_{\si}^{1+2\si} \|_{L^{\zeta}_r} \le C_{\zeta,d} \|
  \varphi_{\si} \|_{L^{(1+2\si)\zeta}_r}^{1+2\si}.
\end{equation*}
\end{proof}

\begin{lemma}
For $d\ge 5$, there exists $\gamma \in (1,2)$ such that $\| \varphi
\|_{L^{\gamma}_r}$ is uniformly bounded in $\si$, for $\si$ close
enough to $\si_*$. 
\end{lemma}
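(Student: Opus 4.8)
The plan is to apply the elliptic estimate of Lemma~\ref{lem:varphi_zeta} with a single, well-chosen exponent $\zeta$, and to control its right-hand side using the two a priori normalizations $\|\varphi_\si\|_{L^2_r}=\|\varphi_\si\|_{L^{2\si+2}_r}=1$ built into the definition \eqref{eq:varphi_si}. First I would recall that Lemma~\ref{lem:varphi_zeta} gives, for any $\zeta\in(1,d/2)$,
\begin{equation*}
  \|\varphi_\si\|_{L^\gamma_r}\le C_{\zeta,d}\,\|\varphi_\si\|_{L^{(1+2\si)\zeta}_r}^{1+2\si},\qquad \frac1\gamma=\frac1\zeta-\frac2d ,
\end{equation*}
so that the target exponent $\gamma$ lies in $(1,2)$ precisely when $\frac1\zeta-\frac2d>\frac12$, i.e. when $1<\zeta<\frac{2d}{d+4}$ (the bound $\gamma>1$ being automatic from $\zeta>1$). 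This interval is nonempty exactly because $d\ge 5$, which is where the dimensional restriction enters; note also $\frac{2d}{d+4}<d/2$, so the admissibility condition $\zeta\in(1,d/2)$ of Lemma~\ref{lem:varphi_zeta} is respected.

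The key step is then to arrange that the exponent $p:=(1+2\si)\zeta$ appearing on the right-hand side falls in the range $[2,2\si+2]$ on which the two constraint norms control everything by interpolation. At $\si=\si_*$ one has $1+2\si_*=\frac{d+2}{d-2}$ and $2\si_*+2=\frac{2d}{d-2}$, and a direct computation shows $(1+2\si_*)\zeta\in(2,2\si_*+2)$ as soon as $\frac{2(d-2)}{d+2}<\zeta<\frac{2d}{d+4}$. I would check that the interval
\[
  \left(\max\Big(1,\tfrac{2(d-2)}{d+2}\Big),\ \tfrac{2d}{d+4}\right)
\]
is nonempty for every $d\ge5$: for $d=5,6$ the left endpoint equals $1$ and $\frac{2d}{d+4}>1$, while for $d\ge7$ one verifies $\frac{2(d-2)}{d+2}<\frac{2d}{d+4}$ directly. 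Fixing $\zeta$ strictly inside this interval, the strict inclusion $p\in(2,2\si_*+2)$ at $\si_*$ and the continuity of $\si\mapsto(1+2\si)\zeta$ and $\si\mapsto 2\si+2$ guarantee that $p\in[2,2\si+2]$ for all $\si<\si_*$ sufficiently close to $\si_*$.

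Once $p\in[2,2\si+2]$, log-convexity of the $L^p_r$ norms together with $\|\varphi_\si\|_{L^2_r}=\|\varphi_\si\|_{L^{2\si+2}_r}=1$ yields $\|\varphi_\si\|_{L^p_r}\le 1$, whence
\[
  \|\varphi_\si\|_{L^\gamma_r}\le C_{\zeta,d},
\]
uniformly for $\si$ near $\si_*$, with $\gamma\in(1,2)$ fixed. I expect the only genuine obstacle to be the compatibility of the two exponent constraints — that one and the same $\zeta$ can simultaneously push $\gamma$ below $2$ (requiring $\zeta<\frac{2d}{d+4}$) and keep $(1+2\si)\zeta$ inside the interpolation window $[2,2\si+2]$ (requiring $\zeta\gtrsim\frac{2(d-2)}{d+2}$). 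This is precisely the arithmetic that forces $d\ge5$; everything else reduces to interpolation and to the uniform boundedness of $a_\si$ from Lemma~\ref{lem:a_si_b_si}, which is already absorbed into the constant $C_{\zeta,d}$.
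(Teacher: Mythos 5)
Your proof is correct and follows essentially the same route as the paper: apply Lemma~\ref{lem:varphi_zeta} with one fixed $\zeta$ chosen so that simultaneously $\gamma<2$ and $(1+2\si)\zeta\in[2,2\si+2]$ for $\si$ near $\si_*$, then use interpolation between the unit $L^2_r$ and $L^{2\si+2}_r$ norms to bound the right-hand side by $1$. The only cosmetic difference is that you handle all $d\ge5$ at once via the interval $\left(\max\left(1,\tfrac{2(d-2)}{d+2}\right),\tfrac{2d}{d+4}\right)$ and a continuity argument, whereas the paper makes an explicit $\eps$-dependent choice of $\zeta$ and treats $d=5$ and $d\ge6$ separately.
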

\begin{proof}
Take
\begin{equation*}
  \zeta = \left\{
    \begin{aligned}
& 1+\eps & \text{if} \ d=5, \\
& \frac{2}{1+2(\sigma_*-\eps)} & \text{if} \ d\ge 6,
    \end{aligned}
  \right.
\end{equation*}
for some $\eps>0$ small enough, then $\zeta \in (1,2)$ as
\begin{equation*}
  \frac{2}{1+2\si_*} = \frac{2(d-2)}{d+2} = 2 - \frac{8}{d+2} \in
  \left[1,2 \right) \quad \text{for} \ d \ge 6.
\end{equation*}
Applying Lemma \ref{lem:varphi_zeta} we get for $d\ge 6$ that 
\begin{equation*}
  \| \varphi_\si \|_{L^{\gamma}_r}  \le C \| \varphi_\si
  \|_{L^{(1+2\si)\zeta}_r}^{1+2\si} \quad \text{with} \
  \frac{1}{\gamma}=\frac{1}{\zeta} - \frac{2}{d}.
\end{equation*}
First, since
\begin{equation*}
  \frac{1+2 \si_*}{2} - \frac{2}{d} = \frac12 \frac{d+2}{d-2} -
  \frac{2}{d} = \frac12 \frac{d(d+2)-4(d-2)}{d(d-2)} = \frac12
  \frac{d^2-2d+8}{d^2-2d} >\frac12,
\end{equation*}
we have that $\gamma <2$ for $\eps$ small enough. On the other hand,
we have $(1+2\si)\zeta \ge 2$ for $\si \ge \si_*-\eps$, and  
\begin{equation*}
  (1+2\si)\zeta \le 2 \frac{1+2\si_*}{1+2(\si_*-\eps)} < 2+2\si_*
\end{equation*}
for $\eps$ small enough. Thus, by interpolation and as $\| \varphi_\si
\|_{L^2_r}=\| \varphi_\si \|_{L^{2\si+2}_r}=1$, we have $\|
\varphi_\si \|_{L^{(1+2\si)\zeta}} \le 1$, so that $\| \varphi_\si
\|_{L^\gamma_r} \le C$. The case $d=5$ is performed similarly.  
\end{proof}

\begin{lemma} \label{lem:extraction_strong_convergence_L2}
Let $v_n \in H^1_r$ be a family of functions which are radially
symmetric decreasing and uniformly bounded in $H^1_r$ and in
$L^{\gamma}_r$ for some $\gamma \in \left[1,2 \right)$. Then there
exists a subsequence $v_{n_k}$ and $v \in H^1_r \cap L^{\gamma}_r$
a radially symmetric decreasing function such that $v_{n_k}
\underset{k \rightarrow \infty}{\longrightarrow} v$ in $L^2_r$. 
\end{lemma}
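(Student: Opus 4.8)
The plan is to apply the Fr\'echet--Kolmogorov theorem for radially symmetric functions, exactly as in the proof of Lemma~\ref{lem3}. Since the family $(v_n)$ is already bounded in $H^1_r$, the only thing that remains to be checked in order to obtain relative compactness in $L^2_r$ is the equitightness at infinity,
\[
\lim_{R\to\infty}\ \sup_n\ \int_R^\infty v_n(r)^2\, r^{d-1}\,\dd r = 0 .
\]
This tail estimate is the heart of the matter: it is the single place where the assumption $\gamma<2$ is used, and it is precisely what compensates the loss of compactness of radial $H^1$ functions at the $L^2$ endpoint (the $H^1_r$ bound alone only provides equicontinuity, hence local compactness, but no control of the mass escaping to infinity).

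First I would observe that a radially symmetric, nonincreasing function with finite $L^\gamma_r$ norm is automatically nonnegative: if it were negative at some radius it would stay bounded away from $0$ in absolute value on the infinite-measure tail, contradicting integrability against $r^{d-1}\,\dd r$. Using the monotonicity of $v_n$, for every $R>0$,
\[
\|v_n\|_{L^\gamma_r}^\gamma \ge \int_0^R v_n(r)^\gamma\, r^{d-1}\,\dd r \ge v_n(R)^\gamma\,\frac{R^d}{d},
\]
which yields the pointwise decay $v_n(R)\le d^{1/\gamma}R^{-d/\gamma}\|v_n\|_{L^\gamma_r}$. The key observation is that this rate $r^{-d/\gamma}$ beats the Strauss radial rate $r^{-(d-1)/2}$ exactly when $\gamma<2$, and it is what makes the $L^2$ tail summable.

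Inserting this bound into the tail integral, and denoting by $M$ a uniform bound for $\|v_n\|_{L^\gamma_r}$, I obtain
\[
\int_R^\infty v_n(r)^2\, r^{d-1}\,\dd r \le d^{2/\gamma}M^2 \int_R^\infty r^{\,d-1-2d/\gamma}\,\dd r = \frac{d^{2/\gamma}M^2}{2d/\gamma-d}\; R^{\,d-2d/\gamma}.
\]
Since $\gamma<2$ forces the exponent $d-2d/\gamma<0$, the integral converges and the right-hand side tends to $0$ as $R\to\infty$, uniformly in $n$. This establishes equitightness, and hence, by the radial Fr\'echet--Kolmogorov theorem, a subsequence $v_{n_k}$ converging strongly to some $v$ in $L^2_r$.

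It then remains to identify the structure of the limit. Passing to a further subsequence converging almost everywhere, the almost-everywhere limit of nonincreasing functions is again nonincreasing, so (choosing the monotone representative) $v$ is radially symmetric and decreasing. Since $(v_n)$ is bounded in $H^1_r$, its weak $H^1_r$ limit must coincide with the strong $L^2_r$ limit $v$; weak lower semicontinuity of the $H^1_r$ norm then gives $v\in H^1_r$, and Fatou's lemma along the almost-everywhere convergent subsequence gives $v\in L^\gamma_r$. I expect the main obstacle throughout to be the tail estimate, namely recognizing that the decreasing hypothesis combined with the $L^\gamma_r$ bound for $\gamma<2$ is exactly the additional information needed to restore the $L^2$ compactness that radial $H^1$ boundedness fails to provide on its own.
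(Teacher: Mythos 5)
Your proof is correct, but it takes a genuinely different route from the paper. The paper's proof is a three-line argument: Strauss's compactness lemma gives a subsequence with $v_{n_k}\to v$ strongly in $L^p_r$ for $p\in(2,2^*)$, Fatou's lemma yields $v\in L^\gamma_r$, and then interpolation between the uniformly bounded $L^\gamma_r$ norms (with $\gamma<2$) and the convergent $L^p_r$ norms gives strong convergence in $L^2_r$; note that this route needs only radial symmetry for the compactness step, the monotonicity being irrelevant there. You instead exploit the decreasing hypothesis head-on, deriving the pointwise decay $v_n(R)\le d^{1/\gamma}M R^{-d/\gamma}$ from the $L^\gamma_r$ bound, which yields the uniform tail estimate $\int_R^\infty v_n^2\,r^{d-1}\,\dd r\lesssim R^{d-2d/\gamma}\to 0$, and then conclude by the same radial Fr\'echet--Kolmogorov argument the paper already uses in Lemma~\ref{lem3} (local Rellich compactness from the $H^1_r$ bound plus equitightness). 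What your approach buys is an explicit, quantitative localization of where $\gamma<2$ enters (the exponent $d-2d/\gamma<0$) and independence from Strauss's lemma; what the paper's approach buys is brevity and the fact that it works without monotonicity in the compactness step. Your limit-identification steps (monotone representative of the a.e.\ limit, weak $H^1_r$ limit coinciding with the strong $L^2_r$ limit, Fatou for $L^\gamma_r$) are all sound. One cosmetic imprecision: your aside that $r^{-d/\gamma}$ beats the Strauss rate $r^{-(d-1)/2}$ ``exactly when $\gamma<2$'' is not quite right (that comparison holds for all $\gamma<\frac{2d}{d-1}$); the correct and relevant dichotomy, which your computation actually uses, is that the squared tail $r^{d-1-2d/\gamma}$ is integrable at infinity if and only if $\gamma<2$.
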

\begin{proof}
  Strauss lemma (\cite{Strauss77}, see also
  \cite[Proposition~1.7.1]{CazCourant}) implies that there exists a 
  subsequence $(v_{n_k})_{k\ge 0}$ and $v\in H^1_r$ such that
  $v_{n_k}\to v$ in $L^p_r$ for all $p\in (2,2^*)$, where 
  $2^*=\frac{2d}{d-2}$. Fatou's lemma yields $v\in L^{\gamma}_r$, and
  interpolation implies $v_{n_k}\to v$ in $L^2_r$. 
\end{proof}
The next lemma is a straightforward consequence of \cite{Aubin76} and
\cite{Talenti76}:  
\begin{lemma} \label{lem:equation_sigma_star}
Let $u \in H^1_r$ be a radially symmetric decreasing function satisfying 
\begin{equation*}
  \Delta u + a u ^{1+2\si_*} =0 \quad \text{in} \ \mathcal{D}'(\R^d),
\end{equation*}
for some $a>0$. Then there exists $\lambda \ge 0$ such that
$u(x)=\lambda w_*(x \sqrt{a} \lambda^{\si_*} )$. 
\end{lemma}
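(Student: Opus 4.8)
The plan is to remove the coefficient $a$ by an amplitude rescaling that brings the equation into the normalized critical form solved by $w_*$, to invoke the Aubin--Talenti classification of its positive solutions, and finally to unwind the scaling parameters to recover the precise expression in the statement. The only genuine analytic input is the classification, which we are allowed to quote; everything else is bookkeeping with the two scalings (amplitude and dilation) that the critical exponent couples together.

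First I would dispose of the trivial case. If $u\equiv 0$ the conclusion holds with $\lambda=0$, so assume $u\not\equiv 0$. Being radially symmetric, decreasing and in $H^1_r$, the function $u$ has a limit at infinity which must vanish (otherwise $\int_0^\infty r^{d-1}u^2\,\dd r=\infty$), so $u(r)\to 0$ and hence $u\ge 0$. Since $-\Delta u=au^{1+2\si_*}\ge 0$, elliptic regularity (bootstrapping from $u\in H^1_r$, exactly as in Section~\ref{sec:u_si-u_0}) gives $u\in\mathcal C^2(\R^d)$, and the strong maximum principle then forces $u>0$ on $\R^d$.

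Next I would normalize the nonlinearity. Setting $v:=a^{1/(2\si_*)}u$ and using $\(a^{1/(2\si_*)}\)^{2\si_*}=a$, a direct computation gives $\Delta v+v^{1+2\si_*}=a^{1/(2\si_*)}\(\Delta u+a u^{1+2\si_*}\)=0$, so $v$ is a positive, radially symmetric, decreasing $H^1_r$ solution of the normalized critical equation $\Delta v+v^{1+2\si_*}=0$. By the Aubin--Talenti classification (\cite{Aubin76,Talenti76}), combined with the one-parameter scaling invariance $w_*(\rho)\mapsto\mu^{1/\si_*}w_*(\mu\rho)$ recalled after \eqref{eq:variation-alg}, every such solution is of the form $v(x)=\mu^{1/\si_*}w_*(\mu x)$ for some $\mu>0$.

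It then remains to match the constants. Putting $\lambda:=a^{-1/(2\si_*)}\mu^{1/\si_*}>0$, one checks $\lambda^{\si_*}=a^{-1/2}\mu$, whence $\sqrt a\,\lambda^{\si_*}=\mu$, and therefore $u(x)=a^{-1/(2\si_*)}v(x)=a^{-1/(2\si_*)}\mu^{1/\si_*}w_*(\mu x)=\lambda\, w_*\(x\sqrt a\,\lambda^{\si_*}\)$, which is the claimed identity (the value $\lambda=0$ covering the trivial solution). The main point to be careful about — and the one place where the precise hypotheses matter — is to justify that a radially symmetric decreasing $H^1_r$ solution genuinely lies in the finite-energy class covered by Aubin--Talenti; this is ensured by $H^1_r\subset\dot H^1_r$ together with the positivity established above. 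Note in passing that for $d\in\{3,4\}$ the bubble $w_*\notin L^2$, so the only $H^1_r$ solution is $u\equiv0$, consistent with allowing $\lambda=0$, while the nontrivial regime $\lambda>0$ corresponds precisely to $d\ge 5$, as used in the sequel.
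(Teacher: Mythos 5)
Your proposal is correct and takes essentially the same route as the paper, which states this lemma without proof as ``a straightforward consequence of \cite{Aubin76} and \cite{Talenti76}'': the amplitude rescaling $v=a^{1/(2\si_*)}u$, the appeal to the classification of positive finite-energy solutions of the normalized critical equation together with the scaling invariance $w_*\mapsto \mu^{1/\si_*}w_*(\mu\,\cdot)$, and the parameter matching $\lambda=a^{-1/(2\si_*)}\mu^{1/\si_*}$ are exactly the intended bookkeeping, and your checks (vanishing at infinity, positivity via the maximum principle, the $d\in\{3,4\}$ degeneracy forcing $\lambda=0$) are all sound. The only minor caveat is that the regularity step at the critical exponent needs a Brezis--Kato type argument rather than the naive bootstrap (which stalls at $L^{2d/(d-2)}$), but this is classical and does not affect the conclusion.
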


We will now use the functional introduced in \cite{Weinstein83}, for $0<\si<\si_*$,
  \begin{equation*}
    J_\si(f) = \frac{\|\nabla f\|_{L^2(\R^d)}^{d\si}
      \|f\|_{L^2(\R^d)}^{2-(d-2)\si}}{\|f\|_{L^{2\si+2}(\R^d)}^{2\si+2}}, 
  \end{equation*}
  and its limiting expression from \cite{Talenti76},
  \begin{equation*}
    J_{\si_*}(f) = \frac{\|\nabla f\|_{L^2(\R^d)}^{d\si_*}
     }{\|f\|_{L^{2\si_*+2}(\R^d)}^{2\si_*+2}}. 
  \end{equation*}
  It follows from \cite{Talenti76} (case $\si=\si_*$) and
  \cite[Theorem~B]{Weinstein83} (case $0<\si<\si_*$) that the
  minimizers of $I_\si$, $0<\si\le \si_*$, are ground state 
  solutions (hence positive) to
  \begin{equation}
    \label{eq:psi}
    \frac{d\si}{2}\Delta \psi +\psi^{2\si+1} = \frac{(d-2)}{2} (\si_*-\si)\psi. 
  \end{equation}
  There is uniqueness (up to translation) in the case $\si<\si_*$, but
  no longer in the case $\si=\si_*$ due to the additional scaling
  invariance. For any $\lambda>0$,
\begin{equation*}
  w_{*,\lambda}:= \lambda^{1/\si_*}w_*\(\lambda \rho\)
\end{equation*}
is the unique positive, radially symmetric solution to
\begin{equation*}
   w_{*,\lambda}''
   +\frac{d-1}{\rho}w_{*,\lambda}'+w_{*,\lambda}^{2\si_*+1}=0,\quad
   w_{*,\lambda}(0) = \lambda^{1/\si_*},\ w_{*,\lambda}'(0)=0.
\end{equation*}
Denote, for $\si\le \si_*$, 
 \begin{equation*}
    \ell_\si:=\inf_{f\in H^1(\R^d)} J_\si(f) .
  \end{equation*}
As recalled above, it follows from \cite{FGL21} (see also
\cite{FLW23}) that $\si\mapsto 
\ell_\si$ is continuous on $(0,\si_*]$. In addition, the value in
the endpoint case $\si_*$ is classical (see \cite{Talenti76} or
\cite[Theorem~2.49]{FLW23}), 
\begin{equation*}
  \ell_{\si_*}=
  \(\frac{d(d-2)}{4}\)^{d/(d-2)}
  2^{2/(d-2)}\pi^{(2d+2)/{d-2}}\Gamma\(\frac{d+1}{2}\)^{-2/(d-2)}. 
\end{equation*}
On the other hand, from \cite{Weinstein83}, for $\si<\si_*$, $J_\si$
is attained by $\psi^*$ which is the unique (from \cite{Kwong}) positive solution to
\begin{equation*}
  \frac{d\si}{2}\Delta \psi^*+\K_\si (\si+1)\(\psi^*\)^{2\si+1}=
  \frac{d-2}{2}(\si_*-\si)\psi^*. 
\end{equation*}
Moreover, it satisfies
\begin{equation*}
  \|\psi^*\|_{L^2(\R^d)}= \|\nabla \psi^*\|_{L^2(\R^d)}=1. 
\end{equation*}
We have the following result.

\begin{lemma} \label{lem:limit_mu_sigma}
The quantity $\Lambda_\si$ from \eqref{eq:mu_sigma} is continuous on
$\left( 0, \si_* \right]$. In particular,  
\begin{equation*}
  \Lambda_\si \underset{\si \to \si_*}{\longrightarrow} \left[ \left(
      \frac{d(d-2)}{4} \right)^{\frac{d}{d-2}} 2 ^{\frac{2}{d-2}}
    \pi^{\frac{2d+2}{d-2}} \Gamma\left( \frac{d+1}{2}
    \right)^{-\frac{2}{d-2}} \right]^{\frac{2}{ d \si_*}} =:
  \Lambda_{\si_*}
\end{equation*}
\end{lemma}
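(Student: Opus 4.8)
The plan is to reduce the statement to the already-available continuity of the scale-invariant quantity $\si\mapsto\ell_\si$ on $(0,\si_*]$ by establishing an explicit algebraic relation between $\Lambda_\si$ and $\ell_\si$. The mechanism is the invariance of $J_\si$ under the two-parameter family of dilations $f\mapsto c\,f(\lambda\,\cdot)$: this freedom is precisely what allows one to match the two normalizations $\|\varphi\|_{L^2_r}=\|\varphi\|_{L^{2\si+2}_r}=1$ used to define $\Lambda_\si$ against the normalization $\|\psi^*\|_{L^2}=\|\nabla\psi^*\|_{L^2}=1$ carried by the minimizer $\psi^*$ of $J_\si$. Since $\ell_\si>0$ is continuous on $(0,\si_*]$ by \cite[Section~2]{FGL21} (see also \cite{FLW23}), once $\Lambda_\si$ is written as an explicit continuous function of $\ell_\si$ the result will follow by composition and evaluation at the endpoint.

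Concretely, I would first recall (by Schwarz symmetrization) that $\ell_\si$ is attained at a radial function, and observe that on the constraint set $\{\|\varphi\|_{L^2_r}=\|\varphi\|_{L^{2\si+2}_r}=1\}$ the functional $J_\si$ reduces to a fixed power of $\|\varphi'\|_{L^2_r}^2$, up to the measure $|\mathbb{S}^{d-1}|$ of the unit sphere that is discarded in the $L^p_r$ convention. Minimizing over this set and using the dilation invariance of $J_\si$ to identify the outcome with $\ell_\si$ yields an identity of the form $\ell_\si=C_d^{\,\si}\,\Lambda_\si^{d\si/2}$ with $C_d$ an explicit constant built solely from $|\mathbb{S}^{d-1}|$; equivalently $\Lambda_\si=C_d^{-2/d}\,\ell_\si^{2/(d\si)}$. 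The same relation can be derived more concretely by writing the minimizer $\varphi_\si$ of \eqref{eq:varphi_si} as $\varphi_\si=c\,\psi^*(\lambda\,\cdot)$ — legitimate since both are positive, radial ground states of the same semilinear equation (compare \eqref{eq:psi} with the equation of Lemma~\ref{lem:a_si_b_si}) and, for $\si<\si_*$, such solutions are unique up to dilation by \cite{Kwong,Weinstein83} — then solving the two normalization conditions for $c,\lambda$ in terms of $\ell_\si$ and substituting into $\Lambda_\si=\|\varphi_\si'\|_{L^2_r}^2$; the exponents in $c$ and $\lambda$ collapse so that $\Lambda_\si$ is a fixed multiple of $\ell_\si^{2/(d\si)}$.

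With this identity in hand the continuity of $\Lambda_\si$ on $(0,\si_*]$ is immediate: $\si\mapsto\ell_\si$ is positive and continuous up to and including $\si_*$, and $\si\mapsto 2/(d\si)$ together with the fixed constant $C_d^{-2/d}$ is continuous, so $\si\mapsto\Lambda_\si$ is continuous on $(0,\si_*]$. Evaluating at the endpoint with the explicit value of $\ell_{\si_*}$ recalled from \cite{Talenti76} (equivalently \cite[Theorem~2.49]{FLW23}) then produces the announced limit. The boundedness and boundedness away from $0$ asserted in Lemma~\ref{lem:lambda_sigma_bound} is recovered as a byproduct, the two endpoints being controlled by the endpoint value $\ell_{\si_*}$ and the positivity of $\ell_\si$.

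The main obstacle is bookkeeping rather than hard analysis: $\ell_\si$ is defined through full $\R^d$ norms whereas $\Lambda_\si$ uses the sphere-free $L^p_r$ norms, so the dilation matching must track the factor $|\mathbb{S}^{d-1}|$ consistently across the $L^2$, $L^{2\si+2}$ and gradient terms in order to pin down the precise constant relating $\Lambda_{\si_*}$ to $\ell_{\si_*}$; this is the one step where care is genuinely required. Beyond that, one must justify the structural ingredients of the reduction — attainment, radial symmetry, and uniqueness up to dilation for $\si<\si_*$ (uniqueness failing exactly at $\si_*$, where only the endpoint \emph{value} $\ell_{\si_*}$ is needed) — and note that the $L^2_r$-constraint defining $\Lambda_{\si_*}$ is meaningful only because $d\ge 5$ guarantees $w_*\in L^2_r$; the passage to the limit itself rides entirely on the cited continuity of $\ell_\si$ at $\si_*$.
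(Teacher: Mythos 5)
Your proposal is correct and follows essentially the same route as the paper: the paper likewise exploits the invariance $J_\si(a f(b\,\cdot))=J_\si(f)$ to match the normalization $\|\varphi\|_{L^2_r}=\|\varphi\|_{L^{2\si+2}_r}=1$ against the minimization defining $\ell_\si$, proves the identity $\ell_\si=\Lambda_\si^{d\si/2}$ in two inequalities, and then concludes by the continuity of $\si\mapsto\ell_\si$ on $(0,\si_*]$ from \cite{FGL21,FLW23} together with Talenti's explicit endpoint value $\ell_{\si_*}$. The only difference is your explicit bookkeeping of the factor $|\mathbb{S}^{d-1}|$ (your constant $C_d$), which the paper suppresses by silently identifying the sphere-free radial norms with the full $\R^d$ norms; since that constant is $\si$-independent, it is immaterial for the continuity claim and only affects the normalization convention behind the stated value of $\Lambda_{\si_*}$.
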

 \begin{proof}
 We first show that $\ell_\si = \Lambda_{\si}^{d \si /2}$. As $\|
 \varphi_\si \|_{L^{2\si+2}(\R^d)}= \| \varphi_\si \|_{L^2(\R^d)}=1$,
 we infer 
 \begin{equation*}
   J_\si(\varphi_\si)= \| \nabla \varphi_\si \|_{L^2(\R^d)}^{d \si /2}
   = \Lambda_{\si}^{d\si/2} \geq  \ell_\si
 \end{equation*}
by definition of $\ell_\si$. 

 On the other hand, as mentioned above we know that the minimum of
 $\ell_\si$ is attained for a positive, radially symmetric function
 $\psi^*$. We resume some arguments from the proof of
 \cite[Theorem~B]{Weinstein83}. 
 For any positive, radially symmetric function $f$,
 define $f_{a,b}(x)=a f(b x)$. We have 
 \begin{equation*}
   J_\si(f_{a,b}) = \frac{a^{d\si} b^{(1-d/2)d\si} \| \nabla f
     \|_{L^2(\R^d)}^{d\si} a^{2-(d-2)\si} b^{-\frac{d}{2}(2-(d-2)\si)}
     \| f \|_{L^2}^{2-(d-2)\si} }{a^{2\si+2} b^{-d} \| f
     \|_{L^{2\si+2}(\R^d)}^{2\si+2}} = J_\si(f).
 \end{equation*}
 Then we can find $a,b>0$ such that $\| f_{a,b} \|_{L^2(\R^d)} = \|
 f_{a,b} \|_{L^{2\si+2}(\R^d)}  =1$, so that $\| \nabla f_{a,b}
 \|_{L^2(\R^d)}^2 \ge \Lambda_\si$, and  
 \begin{equation*}
   J_\si(f_{a,b}) = \| \nabla f_{a,b} \|_{L^2(\R^d)}^{d\si} \ge
   \Lambda_{\si}^{d\si/2}.
 \end{equation*}
 Thus, $J_\si(f) \ge \Lambda_{\si}^{d\si/2}$, and we get by minimizing
 over $f \in H^1(\R^d)$ that $\ell_\si \ge \Lambda_{\si}^{d \si/2}$,
 so we get the equality. 
 
 We finally conclude by the continuity of $\ell_\si>0$.
 \end{proof}

 \begin{proposition}
 We have $\varphi_\si \underset{\si \to \si_*}{\longrightarrow}
 \varphi_*$ in $H^1_r$,  where $\varphi_*(r)=\ll w_*(\ll^{\si_*}
 \sqrt{a_{\si_*}} r)$ with  
 \begin{equation*}
   a_{\si_*}=\lim_{\si \to \si_*} a_\si \quad \text{and} \quad \ll
   = \(\frac{\| w_* \|_{L^2_r}}{\Lambda_{\si_*}^{d/4} }\)^{1/\si_*}.
 \end{equation*}
 \end{proposition}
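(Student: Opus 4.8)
The plan is to combine a compactness argument with an explicit identification of the limiting profile, followed by an upgrade from $L^2_r$ to $H^1_r$ convergence.

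First I would establish compactness and pass to the limit in the equation. The family $(\varphi_\si)$ is bounded in $H^1_r$: by construction $\|\varphi_\si\|_{L^2_r}=1$, while $\|\varphi_\si'\|_{L^2_r}^2=\Lambda_\si$ is bounded and bounded away from $0$ by Lemma~\ref{lem:a_si_b_si} and Lemma~\ref{lem:limit_mu_sigma}. Together with the uniform $L^\gamma_r$-bound for some $\gamma\in(1,2)$ established above (valid for $d\ge 5$), Lemma~\ref{lem:extraction_strong_convergence_L2} yields a sequence $\si_n\to\si_*$ and a radially symmetric nonincreasing $\varphi\in H^1_r\cap L^\gamma_r$ with $\varphi_{\si_n}\to\varphi$ strongly in $L^2_r$, and, up to a further extraction, weakly in $H^1_r$ and almost everywhere. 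I would then pass to the limit in the elliptic equation from Lemma~\ref{lem:a_si_b_si}: the right-hand side $\frac{\si_*-\si}{\si}b_\si\varphi_\si$ tends to $0$ in $L^2_r$ since $\frac{\si_*-\si}{\si}\to 0$ and $b_\si$ is bounded, while $a_\si\varphi_\si^{1+2\si}\to a_{\si_*}\varphi^{1+2\si_*}$ in $\mathcal D'$ by the almost everywhere convergence together with the uniform bound in $L^{2\si_*+2}_r$ (recall $2\si_*+2=\frac{2d}{d-2}$), where $a_{\si_*}=\lim_{\si\to\si_*}a_\si=\Lambda_{\si_*}>0$ because the prefactor and exponent in the formula for $a_\si$ both tend to $1$. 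Hence $\varphi$ solves $\Delta\varphi+a_{\si_*}\varphi^{1+2\si_*}=0$ in $\mathcal D'(\R^d)$.

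Lemma~\ref{lem:equation_sigma_star} then forces $\varphi(x)=\lambda\, w_*\!\left(\sqrt{a_{\si_*}}\,\lambda^{\si_*}x\right)$ for some $\lambda\ge 0$. Since the $L^2_r$-convergence is strong, $\|\varphi\|_{L^2_r}=\lim_n\|\varphi_{\si_n}\|_{L^2_r}=1$, so $\varphi\neq 0$ and $\lambda>0$. Computing $\|\varphi\|_{L^2_r}$ by the change of variable $s=\sqrt{a_{\si_*}}\,\lambda^{\si_*}r$ and using the scaling identity $2-d\si_*=-2\si_*$ gives $\lambda^{2\si_*}a_{\si_*}^{d/2}=\|w_*\|_{L^2_r}^2$; with $a_{\si_*}=\Lambda_{\si_*}$ this is exactly $\lambda=\ll$, so $\varphi=\varphi_*$, and the limit is determined independently of the extracted subsequence.

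It remains to upgrade the convergence to $H^1_r$, which is the delicate point and the main obstacle, because $2\si+2$ approaches the critical Sobolev exponent $2\si_*+2$, where compactness of the embedding is lost and concentration could in principle occur. Writing $\|\varphi_{\si_n}'-\varphi_*'\|_{L^2_r}^2=\|\varphi_{\si_n}'\|_{L^2_r}^2-\|\varphi_*'\|_{L^2_r}^2+o(1)$ by weak convergence, and using $\|\varphi_{\si_n}'\|_{L^2_r}^2=\Lambda_{\si_n}\to\Lambda_{\si_*}$, the whole matter reduces to the norm equality $\|\varphi_*'\|_{L^2_r}^2=\Lambda_{\si_*}$. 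Having the explicit profile in hand, I would verify this directly: the Pohozaev identity for the limit equation gives $\|\varphi_*'\|_{L^2_r}^2=a_{\si_*}\|\varphi_*\|_{L^{2\si_*+2}_r}^{2\si_*+2}=\Lambda_{\si_*}\|\varphi_*\|_{L^{2\si_*+2}_r}^{2\si_*+2}$, while the scaling computation combined with the explicit value of $\Lambda_{\si_*}$ from Lemma~\ref{lem:limit_mu_sigma}, the Pohozaev relation $\|w_*'\|_{L^2_r}^2=\|w_*\|_{L^{2\si_*+2}_r}^{2\si_*+2}$, and the identity $\frac{d\si_*}{2}=1+\si_*$ yield $\|\varphi_*\|_{L^{2\si_*+2}_r}=1$. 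This gives $\|\varphi_*'\|_{L^2_r}^2=\Lambda_{\si_*}$ and hence $\varphi_{\si_n}\to\varphi_*$ strongly in $H^1_r$. The radial monotonicity and the subcritical $L^\gamma_r$ bound are precisely what rule out mass escaping to infinity (securing the strong $L^2_r$ convergence), while the explicit identification of the limit turns the potential critical-exponent concentration into the verifiable equality above. Finally, since $\varphi_*$ is independent of the subsequence, a standard argument shows that the entire family converges, $\varphi_\si\to\varphi_*$ in $H^1_r$ as $\si\to\si_*$.
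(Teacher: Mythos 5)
Your proposal is correct and follows essentially the same route as the paper: compactness via Lemmas~\ref{lem:a_si_b_si}, \ref{lem:varphi_zeta} and \ref{lem:extraction_strong_convergence_L2}, passage to the limit in the equation of Lemma~\ref{lem:a_si_b_si}, identification of the profile through Lemma~\ref{lem:equation_sigma_star} and the $L^2_r$ normalization, uniqueness of the limit to drop the subsequence, and the upgrade to $H^1_r$ by reducing strong convergence (weak convergence plus convergence of norms, with $\|\varphi_\si'\|_{L^2_r}^2=\Lambda_\si\to\Lambda_{\si_*}$) to the identity $\|\varphi_*'\|_{L^2_r}^2=\Lambda_{\si_*}$. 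The only cosmetic difference is that you verify this identity via the Pohozaev identity for the limit equation together with $\|\varphi_*\|_{L^{2\si_*+2}_r}=1$, whereas the paper computes $\|\varphi_*'\|_{L^2_r}$ directly by scaling; both rest on the same two facts, $\Lambda_{\si_*}=\|w_*'\|_{L^2_r}^2/\|w_*\|_{L^{2\si_*+2}_r}^2$ and $\|w_*'\|_{L^2_r}^2=\|w_*\|_{L^{2\si_*+2}_r}^{2\si_*+2}$.
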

 \begin{proof}
 By Lemma \ref{lem:a_si_b_si} and Lemma \ref{lem:varphi_zeta},
 $\varphi_\si$ satisfies the assumption of
 Lemma~\ref{lem:extraction_strong_convergence_L2}. Thus, there exists
 $\varphi_{_*} \in H^1_r$ radially symmetric decreasing function such
 that  $\varphi_{\si_n} \underset{n \to \infty}{\longrightarrow}
 \varphi_{*}$ in $L^2_r$. Moreover, using the fact that
 $\Lambda_\si \underset{\si \to \si_*}{\longrightarrow}
 \Lambda_{\si_*}$ from Lemma \ref{lem:limit_mu_sigma}, we infer: 
 \begin{align*}
 & \ \Delta \varphi_{\si_n} \underset{n \to \infty}{\longrightarrow}
   \Delta \varphi_{*} \quad   \text{in }  H^{-2}(\R^d), \\
 & \ a_{\si_n} \varphi_{\si_n}^{1+2{\si_n}} \underset{n \to
   \infty}{\longrightarrow} \Lambda_{\si_*} \varphi_{*}^{1+2\si_*}
   \quad   \text{in }  \mathcal{D}'(\R^d), \\
 & \ (\si_*-\si_n) b_{\si_n} \varphi_{\si_n} \underset{n \to
   \infty}{\longrightarrow} 0 \quad   \text{in }  L^2(\R^d). 
 \end{align*}
 For the second claimed convergence, write
 \begin{equation*}
  \varphi_{*}^{1+2\si_*} - \varphi_{\si_n}^{1+2\si_n}=
  \varphi_{*}^{1+2\si_*} - \varphi_{*}^{1+2\si_n} +
  \varphi_{*}^{1+2\si_n} - \varphi_{\si_n}^{1+2\si_n}, 
\end{equation*}
In view of the pointwise estimate $|\varphi_*|^{1+2\si_n} \le
|\varphi_*|\max\(1,|\varphi_*|^{2\si_*}\)$, Lebesgue Dominated
Convergence Theorem yields
\begin{equation*}
   \varphi_{*}^{1+2\si_*} - \varphi_{*}^{1+2\si_n}\Tend n {\infty} 0
   \quad\text{in }\mathcal{D}'(\R^d).
 \end{equation*}
 For the remaining difference, write similarly
 \begin{align*}
   |\varphi_{*}^{1+2\si_n} - \varphi_{\si_n}^{1+2\si_n}|
   &\lesssim |\varphi_{*}
     - \varphi_{\si_n}| \( |\varphi_{*}|^{2\si_n} + |\varphi_\si|^{2\si_n}\)\\
   &\lesssim |\varphi_{*}
     - \varphi_{\si_n}| \( \max\(1,|\varphi_{*}|^{2\si_*}\) +
     \max\(1,|\varphi_{\si_n}|^{2\si_*} \)\).
 \end{align*}
 Using the strong convergence $\varphi_{\si_n}\to \varphi_*$  in
 $L^2_r$, and the boundedness of $(\varphi_{\si_n})_n$ in
 $H^1_r\subset L^{2\si_*+2}_r$,
 Lebesgue Dominated
Convergence Theorem yields again
\begin{equation*}
   \varphi_{*}^{1+2\si_*} - \varphi_{\si_n}^{1+2\si_n}\Tend n {\infty} 0
   \quad\text{in }\mathcal{D}'(\R^d).
 \end{equation*}
Note that $a_{\si_*}=\Lambda_{\si_*}$ from the explicit
 expressions in Lemma~\ref{lem:a_si_b_si}.
 Again from Lemma~\ref{lem:a_si_b_si}, we know that $\varphi_\si$ satisfies 
 \begin{equation*}
   \Delta \varphi_\si + a_\si \varphi_\si^{1+2\si} = \frac{\si_*-\si}{\si} b_\si
   \varphi,
 \end{equation*}
 so we infer that $\varphi_{*}$ satisfies
 \begin{equation*}
   \Delta \varphi_{*} + \Lambda_{\si_*} \varphi_{*}^{1+2\si_*} = 0.
 \end{equation*}
 Invoking Lemma~\ref{lem:equation_sigma_star}, there exists $\ll
 \ge 0$ such that $\varphi_{*}(r)=\ll w_*(\ll^{\si_*}
 \sqrt{\Lambda_{\si_*}} r)$. Moreover, since $\| \varphi_{\si_n}
 \|_{L^2_r}=1$, we also get $\| \varphi_{*} \|_{L^2_r}=1$ by strong
 convergence in $L^2_r$. Thus,   
 \begin{equation*}
   1 =  \| \varphi_{*} \|_{L^2_r} = \ll \left\| w_*(\ll^{\si_*}
   \sqrt{\Lambda_{\si_*}} \cdot )\right\|_{L^2_r} = \ll^{1-\frac{\si_* d}{2}}
   \Lambda_{\si_*}^{-\frac{d}{4}} \| w_* \|_{L^2_r},
 \end{equation*}
 with $1-d\sigma_* /2=-\si_*$. Hence we can deduce that
 \begin{equation*}
   \ll = \left( \frac{\| w_* \|_{L^2_r}}{a_{\si_*}^{d/4}}
   \right)^{1/\si_*}= \left( \frac{\| w_* \|_{L^2_r}}{\Lambda_{\si_*}^{d/4}}
   \right)^{1/\si_*}.
 \end{equation*}
 Since the limit $\varphi_{*}$ is uniquely characterized, no
 subsequence is needed.  In order to
 infer that $\varphi_\si \underset{\si \to \si_*}{\longrightarrow}
 \varphi_{*}$ in $H^1_r$, we only have to prove that $\|
 \varphi_\si' \|_{L^2_r} \underset{\si \to \si_*}{\longrightarrow} \|
 \varphi_{*}' \|_{L^2_r}$. On the one hand, we know that $\|
 \varphi_\si' \|_{L^2_r}^2 \underset{\si \to \si_*}{\longrightarrow}
 \Lambda_{\si_*}$ from Lemma~\ref{lem:a_si_b_si}. On the other hand, we
 can explicitly compute 
\begin{align*}
  \| \varphi_{*}' \|_{L^2_r}
  & = \ll^{1+\si_*} \Lambda_{\si_*}^{1/2} \left\| w_*'(\ll^{\si_*}
    \sqrt{\Lambda_{\si_*}} \cdot ) \right\|_{L^2_r} = \ll^{1+\si_*- d\si_*/2}
    \Lambda_{\si_*}^{1/2-d/4} \| w_*' \|_{L^2_r} \\ 
& = \Lambda_{\si_*}^{1/2-d/4} \| w_*' \|_{L^2_r} =
  \Lambda_{\si_*}^{-\frac{1}{2\si_*}} \| w_*' \|_{L^2_r} .
\end{align*}
Since $w_*$ is the Aubin-Talenti soliton,
\begin{equation*}
  \Lambda_{\si_*} = \ell_{\si_*}^{2/(d\si_*)} =
\frac{\|w_*'\|_{L^2_r}^2}{\|w_*\|_{L^{2\si_*+2}_r}^2}. 
\end{equation*}
Proceeding like we did in Subsection~\ref{sec:extra}, we check that
$w_*$ satisfies the identity
\begin{equation*}
  \|w_*'\|_{L^2_r}^2 = \|w_*\|_{L^{2\si_*+2}_r}^{2\si_*+2},
\end{equation*}
and we infer $ \Lambda_{\si_*}^{-\frac{1}{2\si_*}} \| w_*' \|_{L^2_r}
= \Lambda_{\si_*}^{1/2}$, hence the result.
 \end{proof}
 
 There remains to prove that $\varphi_\si(0) \underset{\si \to
   \si_*}{\longrightarrow} \alpha$ to end the proof of the convergence
 towards the algebraic soliton. 
 
 \begin{lemma} \label{lem:superior_L_infty_bound}
 There exists $C_5>0$ such that $\beta_\si := \| \varphi_\si \|_
 {L^{\infty}} \le C_5$. 
 \end{lemma}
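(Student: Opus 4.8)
The plan is to recast the elliptic equation satisfied by $\varphi_\si$ as a linear Schr\"odinger equation $-\Delta\varphi_\si=W_\si\varphi_\si$ whose potential has a positive part that is \emph{uniformly integrable} in $L^{d/2}_r$, and then to run a Brezis--Kato type iteration in which this uniform integrability forces every constant to be independent of $\si$. Concretely, from Lemma~\ref{lem:a_si_b_si} I would write
\begin{equation*}
  -\Delta\varphi_\si = W_\si\,\varphi_\si,\qquad W_\si := a_\si\,\varphi_\si^{2\si} - \frac{\si_*-\si}{\si}\,b_\si,
\end{equation*}
so that $0\le W_\si^+\le a_\si\varphi_\si^{2\si}$, with $a_\si$ bounded and $b_\si\ge0$. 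Since $\|\varphi_\si'\|_{L^2_r}^2=\Lambda_\si$ is bounded and $\|\varphi_\si\|_{L^2_r}=1$, the Sobolev inequality \eqref{eq:Sob} gives a uniform bound on $\|\varphi_\si\|_{L^{2d/(d-2)}_r}$.

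The decisive input is the strong convergence $\varphi_\si\to\varphi_*$ in $H^1_r$ proved above, which yields $\varphi_\si\to\varphi_*$ in $L^{2d/(d-2)}_r$, hence uniform integrability of the family $\{\varphi_\si^{2d/(d-2)}\}$. Because $d\si\le d\si_*=\tfrac{2d}{d-2}$ and $\varphi_\si\ge1$ on $\{\varphi_\si>M\}$ for $M\ge1$, one has $\varphi_\si^{d\si}\le\varphi_\si^{2d/(d-2)}$ there, so
\begin{equation*}
  \sup_{\si}\ \bigl\|W_\si^+\,\mathbf{1}_{\{\varphi_\si>M\}}\bigr\|_{L^{d/2}_r}^{d/2}
  \le \sup_{\si} a_\si^{d/2}\int_{\{\varphi_\si>M\}}\varphi_\si^{2d/(d-2)}\,\rho^{d-1}\dd\rho\ \Tend M \infty\ 0 .
\end{equation*}
Thus $\{W_\si^+\}$ is uniformly integrable in $L^{d/2}_r$, not merely bounded. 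With this at hand I would perform the standard truncated test-function iteration on $-\Delta\varphi_\si=W_\si\varphi_\si$, testing against $\varphi_\si\min(\varphi_\si,L)^{2\beta}$, using \eqref{eq:Sob}, and splitting $W_\si^+=W_\si^+\mathbf{1}_{\{\varphi_\si\le M\}}+W_\si^+\mathbf{1}_{\{\varphi_\si>M\}}$: the bounded part is controlled by the $L^{2\beta+2}_r$-norm from the previous step, while the tail part is estimated by the displayed smallness and absorbed into the left-hand side. Since a single threshold $M$ works for all $\si$, every constant produced is $\si$-independent, and one obtains $\|\varphi_\si\|_{L^q_r}\le C(q)$ uniformly in $\si$ near $\si_*$ for each fixed $q<\infty$ (the constant may depend on $q$, but not on $\si$).

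It remains to convert a fixed high $L^q$ bound into an $L^\infty$ bound. Choosing $q=q_*$ with $q_*/(1+2\si)>d/2$, which is possible uniformly for $\si$ close to $\si_*$, the right-hand side $a_\si\varphi_\si^{1+2\si}$ is uniformly bounded in some $L^s_r$ with $s>d/2$; since $\tfrac{\si_*-\si}{\si}b_\si\ge0$, the resolvent bound of Lemma~\ref{lem:Stein_theorem} together with elliptic regularity and the embedding $W^{2,s}\hookrightarrow L^\infty$ for $s>d/2$ gives $\|\varphi_\si\|_{L^\infty_r}\le C_5$ uniformly (one may equally finish by finitely many applications of Lemma~\ref{lem:varphi_zeta} once an exponent $q>\tfrac{2d}{d-2}$ has been reached).

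The main obstacle is the crossing of the critical exponent $\tfrac{2d}{d-2}$: the pure elliptic bootstrap of Lemma~\ref{lem:varphi_zeta} has its fixed point exactly at $\tfrac{2d}{d-2}=d\si_*$ as $\si\to\si_*$, so it stalls there with constants that degenerate, and the naive iteration only yields bounds blowing up like $\exp(c/(\si_*-\si))$. What unlocks the argument is precisely the strong $H^1_r$ convergence $\varphi_\si\to\varphi_*$, which rules out concentration and supplies the uniform $L^{d/2}_r$-integrability of $W_\si^+$ needed to pass the threshold with $\si$-independent constants. An equivalent route would be a blow-up contradiction: a hypothetical divergence $\varphi_\si(0)\to\infty$, rescaled at the natural scale $\beta_\si^{-\si}$, would produce in the limit a nontrivial Aubin--Talenti bubble carrying a fixed amount of $L^{2d/(d-2)}_r$-mass at a vanishing scale, contradicting the strong convergence $\varphi_\si\to\varphi_*$.
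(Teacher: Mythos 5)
Your proposal is correct, but it takes a genuinely different route from the paper. The paper's proof is a short rescaling/uniqueness argument: writing $\varphi_\si(r)=\beta_\si\,\omega_\si(\sqrt{a_\si}\,\beta_\si^{\si}r)$, the rescaled profile $\omega_\si$ satisfies the normalized initial-value problem \eqref{eq:ivp-w}, so by Kwong's uniqueness $\omega_\si=w_\si$ exactly; the scaling identity \eqref{eq:beta} then expresses $\beta_\si$ through $\|\varphi_\si'\|_{L^2_r(I_\si)}$ on the shrinking interval $I_\si=(0,1/(\sqrt{a_\si}\beta_\si^{\si}))$ and through $\|w_\si'\|_{L^2_r((0,1))}$, which stays bounded away from zero by Proposition~\ref{prop:unif-cv}. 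If $\beta_{\si_n}\to\infty$, the strong $H^1_r$ convergence $\varphi_\si\to\varphi_*$ forces the gradient mass on $I_{\si_n}\to\{0\}$ to vanish, and \eqref{eq:beta} then gives $\beta_{\si_n}\to 0$, a contradiction. Your blow-up remark at the end is essentially this proof in disguise: the rescaled limit bubble carrying fixed gradient mass at a vanishing scale is exactly what \eqref{eq:beta} quantifies. Your main route --- uniform Brezis--Kato iteration with the uniform $L^{d/2}_r$-integrability of $W_\si^+$ supplied by the strong $H^1_r$ (hence $L^{2d/(d-2)}_r$) convergence, followed by the Stein bound of Lemma~\ref{lem:Stein_theorem}, Calder\'on--Zygmund, and $W^{2,s}\hookrightarrow L^\infty$ for $s>d/2$ --- is sound and uses the same decisive no-concentration input, but avoids invoking uniqueness of the ground state; it is longer, yet more robust (it would survive in settings without a uniqueness theorem) and yields uniform $W^{2,s}$ bounds as a by-product, which is strictly more than the $L^\infty$ bound. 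Two small caveats: in the iteration the truncation threshold $M$ must be allowed to depend on the step (the absorption constant degrades like $1+\beta$), though each choice remains $\si$-independent, which is all you need for a fixed target exponent; and your parenthetical shortcut via Lemma~\ref{lem:varphi_zeta} alone cannot reach $L^\infty$, since that lemma requires $\zeta<d/2$ and thus only produces finite exponents --- the $W^{2,s}$ finish you give is the correct one.
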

 \begin{proof}
 Let $\omega_\si$ be defined by $\varphi_\si(r)=\beta_\si \omega_\si(\rho)$
 where $\rho=\sqrt{a_\si} \beta_\si^{\si} r$. Then $\omega_\si$ is a
 solution to 
 \begin{equation*}
   \left\{
     \begin{aligned}
& \omega_{\si}'' + \frac{d-1}{\rho} \omega_{\si}' + \omega_{\si}^{1+2\si} =
  \frac{\si_*-\si}{\si} \frac{b_\si}{a_\si \beta_\si^{2 \si}} \omega_\si, \\ 
& \omega_\si(0)=1, \quad \omega_\si'(0)=0.
     \end{aligned}
   \right.
 \end{equation*}
 It is important to observe that by uniqueness of radially symmetric,
 positive solutions going to zero at infinity to \eqref{eq:ivp-w}
 (from \cite{Kwong}, see also \cite[Theorem
1.3]{FQTY08}), we have, since $w_\si(0)=\omega_\si(0)$,
 \begin{equation*}
   \omega_\si=w_\si,\quad \epsilon = \frac{\si_*-\si}{\si} \frac{b_\si}{a_\si
     \beta_\si^{2 \si}}. 
 \end{equation*}
Proposition~\ref{prop:unif-cv} implies for instance that
\begin{equation*}
  \| w_\si'\|_{L^2_r((0,1))} \underset{\si \to \si_*}{\longrightarrow}
  \| w_*' \|_{L^2_r((0,1))} >0.
\end{equation*}
Thus, writing $I_\si=\left( 0,1/(\sqrt{a_\si} \beta_\si^{\si})\right)$, we have
\begin{equation*}
  \| \varphi_\si'\|_{L^2_r(I_\si)} = a_\si^{\frac12 - \frac{d}{4}}
  \beta_\si^{1+\frac{\si}{2}-\frac{\si d}{2}} \|
  w_\si'\|_{L^2_r((0,1))} = a_\si^{-\frac{1}{2\si_*}}
  \beta_\si^{1-\frac{\si}{\si_*}} \| w_\si' \|_{L^2_r((0,1))} .
\end{equation*}
Therefore, we get that
\begin{equation}\label{eq:beta}
  \beta_\si = \left( \frac{a_\si^{1/(2\si_*)} \|
      \varphi_\si'\|_{L^2_r(I_\si)}}{\| w_\si' \|_{L^2_r((0,1))}}
  \right)^{\frac{1}{1-\si/\si_*}}.
\end{equation}
By contradiction, if $\beta_{\sigma_n} \underset{\si \to
  \si_*}{\longrightarrow} \infty$ for some $\si_n \underset{n \to
  \infty}{\longrightarrow} \si_*$, one would get that 
\begin{equation*}
  \| \varphi_{\si_n}'\|_{L^2_r(I_{\si_n})} \underset{n \to
    \infty}{\longrightarrow} 0
\end{equation*}
from the convergence of $\varphi_\si$ in $H^1_r$, and since
$I_\si \to \left\{ 0 \right\}$ as $\si \to\si_*$.
From \eqref{eq:beta}, we would then have that $\beta_{\si_n}
\underset{n \to \infty}{\longrightarrow} 0$, a contradiction. 
 \end{proof}
 
 \begin{proposition}
 We have $\varphi_\si(0) \underset{\si \to \si_*}{\longrightarrow} \ll$.
 \end{proposition}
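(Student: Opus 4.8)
The plan is to exploit that $\varphi_\si(0)=\beta_\si:=\|\varphi_\si\|_{L^\infty_r}$, since $\varphi_\si$ is radially symmetric and decreasing, and then to show that every subsequential limit of the bounded family $(\beta_\si)$ equals $\ll$. By Lemma~\ref{lem:inferior_L_infty_bound} and Lemma~\ref{lem:superior_L_infty_bound} we already know $1\le \beta_\si\le C_5$, so it suffices to pin down the accumulation points. The central tool is the scaling identity recorded in the proof of Lemma~\ref{lem:superior_L_infty_bound}, namely $\varphi_\si(r)=\beta_\si w_\si(\sqrt{a_\si}\,\beta_\si^{\si}\,r)$ with $\omega_\si=w_\si$, which transfers the local uniform control on $w_\si$ provided by Proposition~\ref{prop:unif-cv} into pointwise information on $\varphi_\si$.

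Concretely, I would fix a subsequence $\si_n\to\si_*$ along which $\beta_{\si_n}\to\beta$ for some $\beta\in[1,C_5]$. Using that $a_\si\to a_{\si_*}=\Lambda_{\si_*}$ (which follows from Lemma~\ref{lem:limit_mu_sigma} together with the explicit formula for $a_\si$ in Lemma~\ref{lem:a_si_b_si}) and that $\beta_\si^{\si}$ is bounded, the scaled argument $\sqrt{a_{\si_n}}\,\beta_{\si_n}^{\si_n}\,r$ stays in a fixed compact subset of $[0,\infty)$ for each fixed $r$ and converges to $\sqrt{a_{\si_*}}\,\beta^{\si_*}\,r$. Since $w_\si\to w_*$ in $L^\infty_{\rm loc}$ by Proposition~\ref{prop:unif-cv}, the scaling identity then yields the pointwise convergence $\varphi_{\si_n}(r)\to \beta w_*(\sqrt{a_{\si_*}}\,\beta^{\si_*}\,r)$ for every $r\ge 0$.

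It then remains to identify this pointwise limit. From the convergence $\varphi_\si\to\varphi_*$ in $H^1_r$ established above we have $\varphi_{\si_n}\to\varphi_*$ in $L^2_r$, hence along a further subsequence almost everywhere; uniqueness of limits forces $\beta w_*(\sqrt{a_{\si_*}}\,\beta^{\si_*}\,\cdot)=\varphi_*=\ll w_*(\ll^{\si_*}\sqrt{a_{\si_*}}\,\cdot)$ almost everywhere, hence everywhere by continuity. Evaluating at $r=0$ and using $w_*(0)=1$ gives $\beta=\ll$ (note $\ll>0$, since $\|\varphi_*\|_{L^2_r}=1$ excludes $\varphi_*\equiv 0$). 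As every accumulation point of the bounded family $(\beta_\si)$ equals $\ll$, we conclude $\varphi_\si(0)=\beta_\si\to\ll$. The one delicate point, which I would treat with care, is the joint passage to the limit in the scaling identity: one must check that the product $\sqrt{a_\si}\,\beta_\si^{\si}$ of three $\si$-dependent factors stays in a compact range, so that the merely \emph{local} uniform convergence of $w_\si$ can be invoked at the moving evaluation points; here the lower bound $\beta_\si\ge 1$ is precisely what prevents the limiting scaling from degenerating.
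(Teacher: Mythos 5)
Your proof is correct and follows essentially the same route as the paper: both arguments extract a convergent subsequence $\beta_{\si_n}\to\beta$ (using the same upper and lower bounds on $\beta_\si$), exploit the scaling identity $\varphi_\si(r)=\beta_\si w_\si\big(\sqrt{a_\si}\,\beta_\si^{\si}\,r\big)$ together with the two previously established convergences ($w_\si\to w_*$ uniformly and $\varphi_\si\to\varphi_*$ in $H^1_r$), and identify $\beta=\ll$ by comparing the two limits, uniqueness of the accumulation point then giving convergence of the whole family. The only inessential difference is directional: you pass to the limit in the $\varphi$-variables pointwise (via a.e.\ identification of the $L^2_r$ limit), while the paper inverts the scaling and compares the two limits of $w_{\si_n}$, namely $w_*$ and $\frac{\ll}{\beta} w_*\big((\ll/\beta)^{\si_*}\,\cdot\big)$.
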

 \begin{proof}
 We know that $\beta_\si = \varphi_\si(0)$ is bounded from
 Lemma~\ref{lem:superior_L_infty_bound}, and bounded away from 0 by
 Lemma~\ref{lem:inferior_L_infty_bound}. Take any converging subsequence of
 $\beta_\si$ denoted by $\beta_{\si_n}$, and denote by $\beta$ the
 limit. With the same notations from the previous lemma, we have once
 again that $\omega_{\si_n}$ converges in $W_{r,\mathrm{loc}}^{1,\infty}$
 to $w_*$. On the other hand, from the convergence of $\varphi_\si$ to
 $\varphi_*$ and of $\beta_{\si_n}$ to $\beta$, we also know that 
 \begin{equation*}
   w_{\si_n} \rightarrow  \frac{1}{\beta} \varphi_* \left(
     \frac{\cdot}{\sqrt{\Lambda_{\si_*}} \beta^{\si_*}} \right) =
   \frac{\ll}{\beta} w_* \left( \left( \frac{\ll}{\beta}
     \right)^{\si_*} \cdot \right) \quad \text{in} \ H^1_r.
 \end{equation*}
 By comparison, we thus get that $\beta=\ll$. Since the limit is
 unique, the conclusion holds for the whole sequence. 
 \end{proof}

\subsection{End of the proof of Theorem~\ref{theo:algebraic}}
\label{sec:epsilon-prime}

In view of \eqref{eps-expression} and \eqref{eq:eps-sublin},
\begin{equation*}
  \frac{\epsilon(\si)}{\si_*-\si}=
  \frac{\epsilon(\si)-\epsilon(\si_*)}{\si_*-\si}=
  \frac{\|w_\si'\|_{L^2_r}^2}{\si(1+\si_*)\|w_\si\|_{L^2_r}^2}. 
\end{equation*}
The convergence $w_\si\to w_*$ in $H^1_r$ implies
\begin{equation}
\label{quotient-eps}
 \lim_{\si\to \si_*}
  \frac{\epsilon(\si_*)-\epsilon(\si)}{\si_*-\si} = -
  \frac{\|w_*'\|_{L^2_r}^2}{\si_*(1+\si_*)\|w_*\|_{L^2_r}^2} ,
\end{equation}
and thus $\si\mapsto \epsilon(\si)$ is $\mathcal{C}^1$ on $(0,\si_*]$,
with $\epsilon'(\si_*)$ given by the above quantity.
To compute this ratio, we use Emden--Fowler transformation,
\begin{equation}
\label{eq:EF}
\rho = e^t, \quad W_*(t) = e^{t/\sigma_*} w_*(\rho).
\end{equation}
This transformation, applied to the expression \eqref{eq:alg-soliton},
yields
\begin{equation}
\label{eq:W}
W_*(t) = \frac{e^{\frac{t}{\sigma_*}}}{(1+a e^{2t})^{\frac{1}{\sigma_*}}},
\end{equation}
and thus
\begin{equation*}
\| w_* \|^2_{L^2_r} = \int_0^{\infty} \frac{\rho^{1 +
    \frac{2}{\sigma_*}} \dd \rho}{(1+a \rho^2)^{\frac{2}{\sigma_*}}} =
\int_{-\infty}^{\infty} \frac{e^{2t + \frac{2t}{\sigma_*}} }{(1 + a
          e^{2t})^{\frac{2}{\sigma_*}}} \dd t .
\end{equation*}
On the other hand, integration by parts gives
\begin{align*}
  \| w_*' \|^2_{L^2_r}
  &= \frac{4 a^2}{\sigma_*^2} \int_0^{\infty} \frac{\rho^{3 +
    \frac{2}{\sigma_*}} \dd \rho}{(1+a \rho^2)^{2 +  \frac{2}{\sigma_*}}}  
= \frac{4 a (1 + \sigma_*)}{\sigma_*^2 (2+\sigma_*)} \int_0^{\infty}
    \frac{\rho^{1 + \frac{2}{\sigma_*}} \dd \rho}{(1+a \rho^2)^{1 +
    \frac{2}{\sigma_*}}}.  
\end{align*}
Similarly, we get
\begin{align*}
\| w_*' \|^2_{L^2_r} = \frac{4 a (1 + \sigma_*)}{\sigma_*^2
  (2+\sigma_*)} \int_{-\infty}^{\infty}   
\frac{e^{2t + \frac{2t}{\sigma_*}} }{(1 + a
  e^{2t})^{1+\frac{2}{\sigma_*}}} \dd t.
\end{align*}
To proceed further, we use the identity
\begin{equation}
\label{identity-1}
\int_{-\infty}^{\infty}  
e^{\frac{2t}{\sigma_*}} \frac{(1-a e^{2t})}{(1 + a e^{2t})^{1 +
    \frac{2}{\sigma_*}}} \dd t = \frac{\sigma_*}{2}
\int_{-\infty}^{\infty} \frac{\dd}{\dd t}
\frac{e^{\frac{2t}{\sigma_*}}}{(1 + a  e^{2t})^{\frac{2}{\sigma_*}}}
\dd t = 0,  
\end{equation}
to further obtain 
\begin{align}
\int_{-\infty}^{\infty}  
\frac{e^{\frac{2t}{\sigma_*}}}{(1 + a e^{2t})^{1 +
  \frac{2}{\sigma_*}}} \dd t
  &= a \int_{-\infty}^{\infty}  
\frac{e^{2t + \frac{2t}{\sigma_*}} }{(1 + a e^{2t})^{1 +
    \frac{2}{\sigma_*}}} \dd t \notag \\ 
&= -\frac{\sigma_*}{4}  \int_{-\infty}^{\infty}  
e^{\frac{2t}{\sigma_*}} \frac{\dd}{\dd t} \frac{1}{(1 + a
  e^{2t})^{\frac{2}{\sigma_*}}} \dd t 
\notag \\
&= \frac{1}{2} \int_{-\infty}^{\infty}
  \frac{e^{\frac{2t}{\sigma_*}}}{(1 + a e^{2t})^{\frac{2}{\sigma_*}}}
  \dd t.
\label{identity-2}
\end{align}
Therefore,
\begin{equation*}
  \| w_*' \|^2_{L^2_r}
  = \frac{4 (1 + \sigma_*)}{\sigma_*^2  (2+\sigma_*)}
    \int_{-\infty}^{\infty}    
\frac{e^{\frac{2t}{\sigma_*}} }{(1 + a e^{2t})^{1+\frac{2}{\sigma_*}}}
    \dd t 
= \frac{2 (1 + \sigma_*)}{\sigma_*^2 (2+\sigma_*)}
  \int_{-\infty}^{\infty} \frac{e^{\frac{2t}{\sigma_*}} }{(1 + a
  e^{2t})^{\frac{2}{\sigma_*}}} \dd t.  
\end{equation*}
On the other hand, for $\sigma_* < 1$ (or, equivalently, $d\ge 5$), we have 
\begin{equation*}
\int_{-\infty}^{\infty}  
e^{\frac{2t}{\sigma_*}} \frac{(1-a (1-\sigma_*) e^{2t})}{(1 + a
  e^{2t})^{\frac{2}{\sigma_*}}} \dd t = \frac{\sigma_*}{2}
\int_{-\infty}^{\infty}  
\frac{\dd}{\dd t} \frac{e^{\frac{2t}{\sigma_*}}}{(1 + a
  e^{2t})^{\frac{2}{\sigma_*}-1}} \dd t = 0, 
\end{equation*}
which yields 
\begin{align*}
  \| w_* \|^2_{L^2_r}
  &= \int_{-\infty}^{\infty}  \frac{e^{2t + \frac{2t}{\sigma_*}} }{(1 + a
 e^{2t})^{\frac{2}{\sigma_*}}}\dd t 
= \frac{1}{a (1-\sigma_*)} \int_{-\infty}^{\infty}  
\frac{e^{\frac{2t}{\sigma_*}} }{(1 + a e^{2t})^{\frac{2}{\sigma_*}}}
    \dd t.
\end{align*}
Using the explicit expressions for  $\| w_*' \|^2_{L^2_r}$ and 
$\| w_* \|^2_{L^2_r}$ in \eqref{quotient-eps} yields
\begin{equation}\label{eq:deriv-eps}
\epsilon'(\sigma_*) = 
-\frac{2 a (1-\sigma_*)}{\sigma_*^3 (2+\sigma_*)} = \frac{(\sigma_*-1)}{2 \sigma_*(1 + \sigma_*) (2+\sigma_*)} < 0,
\end{equation}
where we have used $a = \frac{\sigma_*^2}{4 (1+\sigma_*)}$.
Since by definition,
\begin{equation}
\label{eps-si-again}
  \epsilon(\si) = \(\alpha(\si)\)^{-2\si}\Longleftrightarrow
  \alpha(\si) = \epsilon(\si)^{-1/(2\si)},
\end{equation}
the asymptotics $\epsilon(\si)\sim (\si-\si_*)\epsilon'(\si_*)$ as
$\si\to \si_*$ yields the final claim of Theorem~\ref{theo:algebraic}.

\subsection{Further properties of the ground state near the
  algebraic soliton}

Related to the algebraic soliton $w_*$, we introduce the linearized operator 
$\mathcal{M}_0 : H^2_r \subset L^2_r \to
L^2_r$ given by  
\begin{equation}
\label{eq:M0}
\mathcal{M}_0 = - \frac{\dd^2}{\dd\rho^2} - \frac{d-1}{\rho}
\frac{\dd}{\dd \rho} - \frac{1+2\sigma_*}{(1+a \rho^2)^2}. 
\end{equation} 
It is a self-adjoint operator in $L^2_r$ with the essential
spectrum located on $[0,\infty)$ by Weyl's theorem.  

Since $w_*$ is characterized variationally as a constrained minimizer of 
\eqref{eq:variation-alg} with a single constraint, the Morse index of
$\mathcal{M}_0$ (the number of negative eigenvalues in
$L^2_r$) is either $0$ or $1$. Since 
\begin{equation*}
\langle \mathcal{M}_0 w_*, w_* \rangle = -2 \int_0^{\infty} \rho^{d-1}
|w_*(\rho)|^{2 \sigma + 2} \dd \rho < 0, 
\end{equation*}
the Morse index is exactly one. To characterize solutions 
of the homogeneous equation $\mathcal{M}_0 \mathfrak{v} = 0$, 
we note that $\rho = 0$ is a regular singular point with two linearly
independent solution $1 + \mathcal{O}(\rho^2)$ and $\rho^{2-d} \left[
  1 + \mathcal{O}(\rho) \right]$. Since the second solution is
singular and does not belong to $L^2_r$, we define the
unique solution $\mathfrak{v} \in \mathcal{C}^2(0,\infty)$ of the
initial-value problem  
\begin{equation} 
\label{eq:ivp-lin-alg}
\left\{
  \begin{array}{l}
\mathfrak{v}''(\rho) + \frac{d-1}{\rho} \mathfrak{v}'(\rho) + 
\frac{1+2\sigma_*}{(1+a \rho^2)^2} \mathfrak{v}(\rho) = 0, \\
    \mathfrak{v}(0) = 1, \quad \mathfrak{v}'(0) = 0.
  \end{array}
\right.
\end{equation}
We invoke \cite[Theorem~8.1, p.~92]{CoddingtonLevinson}:
\begin{equation*}
  X =
  \begin{pmatrix}
    \mathfrak{v}\\
    \mathfrak{v}'
  \end{pmatrix}
\end{equation*}
solves $X'=(A+V(\rho)+R(\rho))X$ with
\begin{equation*}
  A=
  \begin{pmatrix}
    0 & 1 \\
   0& 0
  \end{pmatrix},
  \quad
  V(\rho) =
  \begin{pmatrix}
    0 & 0 \\
   0& -\frac{d-1}{\rho}
  \end{pmatrix},
  \quad
  R(\rho) =
  \begin{pmatrix}
    0 & 0 \\
   -\frac{1+2\sigma_*}{(1+a \rho^2)^2} & 0
  \end{pmatrix}.
\end{equation*}
Since
$V'$ and $R$ are integrable on $(1,\infty)$,
with $\mu_1=\mu_2=0$ in the notations of
\cite[Theorem~8.1, p.~92]{CoddingtonLevinson}, 
$\mathfrak{v}(\rho)$ does not diverge as $\rho \to \infty$: it  satisfies 
\begin{equation}
\label{eq:v-limit}
\mathfrak{v}(\rho) \to \mathfrak{v}_{\infty} \quad \mbox{\rm as} \quad
\rho \to \infty, 
\end{equation}
with uniquely defined $\mathfrak{v}_{\infty} \in \mathbb{R}$. Since
the Morse index is exactly one, Sturm's theorem implies that
$\mathfrak{v}(\rho)$ has a single node such that $\mathfrak{v}(\rho) >
0$ for $\rho \in [0,\rho_0)$ and $\mathfrak{v}(\rho) < 0$ for $\rho
\in (\rho_0,\infty)$ so that $\mathfrak{v}_{\infty} \le  0$. However,
due to degeneracy of the minimizers of \eqref{eq:variation-alg} by the
scaling transformation, we prove in the following lemma that
$\mathfrak{v}_{\infty} = 0$ so that $\mathfrak{v} \in H^2_r
\subset L^2_r$ if $d \ge 5$. Since $\mathcal{M}_0 :
H^2_r \subset L^2_r \to L^2_r$ is not
Fredholm due to $0$ being an embedded eigenvalue in the end point of
the essential spectrum, we also characterize solutions of the
inhomogeneous equation $\mathcal{M}_0 \mathfrak{g} = f$ for a given $f
\in L^2_r$. 

\begin{lemma}
  \label{lem-Fredholm}
The exact solution of \eqref{eq:ivp-lin-alg} is given by 
\begin{equation}
\label{eq:kernel}
\mathfrak{v}(\rho) = \frac{1 - a \rho^2}{(1+a \rho^2)^{1+1/\sigma_*}},
\end{equation}
hence $\mathfrak{v} \in H^2_r$ if $d \ge 5$. For every $f
\in L^2_r$ and $d \ge 5$, there exists a unique solution
$\mathfrak{g} = \mathcal{M}_0^{-1} f$ satisfying $\mathfrak{g}(0) =
0$, $\mathfrak{g}'(0) = 0$, and
\begin{equation*}
\mathfrak{g}_{\infty} := \lim_{\rho \to \infty} \mathfrak{g}(\rho) = 0
\end{equation*} 
if and only if $\langle \mathfrak{v}, f \rangle = 0$.
\end{lemma}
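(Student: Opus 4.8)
\emph{Overall strategy.} The plan is to treat the two assertions separately: first pin down the explicit kernel element $\mathfrak{v}$ together with its integrability, and then build $\mathfrak{g}=\mathcal{M}_0^{-1}f$ and extract its limit at infinity through a Wronskian/Lagrange identity, the vanishing of which is governed by $\langle\mathfrak{v},f\rangle$.

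\emph{The kernel element.} I would obtain \eqref{eq:kernel} conceptually rather than by blind substitution. Since $\mathcal{M}_0$ in \eqref{eq:M0} is exactly the linearization at $w_*$ of the radial equation $w''+\frac{d-1}{\rho}w'+w^{2\si_*+1}=0$ (indeed $w_*^{2\si_*}=(1+a\rho^2)^{-2}$, so $(2\si_*+1)w_*^{2\si_*}$ is precisely the zeroth-order coefficient), the scaling family $w_{*,\lambda}(\rho)=\lambda^{1/\si_*}w_*(\lambda\rho)$ is a curve of solutions, and differentiating at $\lambda=1$ gives $\frac{1}{\si_*}w_*+\rho w_*'\in\ker\mathcal{M}_0$. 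A direct computation from \eqref{eq:alg-soliton} shows this equals $\frac{1}{\si_*}\mathfrak{v}$ with $\mathfrak{v}$ as in \eqref{eq:kernel}, and one checks $\mathfrak{v}(0)=1$, $\mathfrak{v}'(0)=0$; by uniqueness of the initial-value problem \eqref{eq:ivp-lin-alg} this is the solution. The decay $\mathfrak{v}(\rho)\sim -a^{-1/\si_*}\rho^{2-d}$ (using $2/\si_*=d-2$) then yields $\rho^{d-1}|\mathfrak{v}(\rho)|^2\sim\rho^{3-d}$, integrable at infinity exactly when $d\ge 5$; the derivatives decay one power faster, so $\mathfrak{v}\in H^2_r$ if and only if $d\ge 5$.

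\emph{The inhomogeneous problem and the solvability condition.} For $f\in L^2_r$ I would first construct the unique solution $\mathfrak{g}$ of $\mathcal{M}_0\mathfrak{g}=f$ with $\mathfrak{g}(0)=\mathfrak{g}'(0)=0$ by ODE theory at the regular singular point $\rho=0$ (indicial roots $0$ and $2-d$, selecting the regular branch via a Volterra fixed point; uniqueness holds since two such solutions differ by a multiple of $\mathfrak{v}$ vanishing at the origin). To reach $\rho\to\infty$, I would pair $\mathfrak{v}$ with a second homogeneous solution $\mathfrak{u}$, which by the same Coddington--Levinson argument used for \eqref{eq:v-limit} (or by reduction of order, $\mathfrak{u}=\mathfrak{v}\int\rho^{1-d}\mathfrak{v}^{-2}\,\dd\rho$) tends to a nonzero constant at infinity, the Wronskian being the constant $W_0=\rho^{d-1}(\mathfrak{v}\mathfrak{u}'-\mathfrak{v}'\mathfrak{u})=-a^{-1/\si_*}(d-2)$. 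Writing $\mathfrak{g}=c_1\mathfrak{v}+c_2\mathfrak{u}$ by variation of parameters, the coefficient $c_2'=-W_0^{-1}\mathfrak{v}f\rho^{d-1}$ is \emph{absolutely integrable} on $(0,\infty)$ precisely because $\mathfrak{v},f\in L^2_r$; this is the key point that tames the degeneracy. One then gets that $\mathfrak{g}_\infty$ exists, while the $c_1\mathfrak{v}$ contribution vanishes since $c_1=O(\rho^{d/2})$ and $\mathfrak{v}=O(\rho^{2-d})$ with $d\ge 5$. The cleanest way to record the final identity is the Lagrange computation: integrating $\rho^{d-1}(\mathfrak{v}\mathcal{M}_0\mathfrak{g}-\mathfrak{g}\mathcal{M}_0\mathfrak{v})$ on $(0,R)$ with $\mathcal{M}_0\mathfrak{v}=0$, $\mathcal{M}_0\mathfrak{g}=f$ gives
\[
\int_0^R\rho^{d-1}\mathfrak{v}f\,\dd\rho=-\left[\rho^{d-1}\(\mathfrak{v}\mathfrak{g}'-\mathfrak{g}\mathfrak{v}'\)\right]_0^R.
\]
The endpoint at $0$ vanishes by the initial conditions; letting $R\to\infty$, the term $R^{d-1}\mathfrak{v}(R)\mathfrak{g}'(R)\to 0$ while $R^{d-1}\mathfrak{v}'(R)\to a^{-1/\si_*}(d-2)$, so $\langle\mathfrak{v},f\rangle=a^{-1/\si_*}(d-2)\,\mathfrak{g}_\infty$. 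As the constant is nonzero, $\mathfrak{g}_\infty=0$ if and only if $\langle\mathfrak{v},f\rangle=0$.

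\emph{Main obstacle.} The delicate step is the asymptotic control at infinity for a merely $L^2_r$ source: showing that $\mathfrak{g}$ possesses a finite limit and, above all, that $\rho^{d-1}\mathfrak{v}(\rho)\mathfrak{g}'(\rho)=o(1)$, i.e. $\mathfrak{g}'(\rho)=o(\rho^{-1})$ (since $\rho^{d-1}\mathfrak{v}\sim -a^{-1/\si_*}\rho$). This is exactly the point where the resonance---$0$ being an embedded eigenvalue of $\mathcal{M}_0$ with eigenfunction $\mathfrak{v}$---makes $\mathcal{M}_0$ non-Fredholm. The variation-of-parameters representation against the bounded solution $\mathfrak{u}$, exploiting the absolute convergence of $\int_0^\infty\mathfrak{v}f\rho^{d-1}\,\dd\rho$ guaranteed by $d\ge 5$, is what circumvents the obstruction, and the same $L^2$-pairing is precisely the quantity entering the solvability criterion.
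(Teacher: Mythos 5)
Your proof is correct and takes essentially the same route as the paper: the kernel element \eqref{eq:kernel} via differentiation of the scaling family $\lambda^{1/\si_*}w_*(\lambda\,\cdot)$ at $\lambda=1$, the variation-of-parameters representation against a second homogeneous solution tending to a nonzero constant at infinity, the Cauchy--Schwarz bound $O(\rho^{d/2})$ that kills the $\mathfrak{v}$-contribution for $d\ge 5$, and the identification $\langle \mathfrak{v},f\rangle = c\,\mathfrak{g}_\infty$ with $c\neq 0$ (the paper reads the limit directly off the variation-of-constants formula rather than through your Green/Lagrange identity, a purely cosmetic difference). One small normalization slip: the value $W_0=-a^{-1/\si_*}(d-2)$ you state corresponds to normalizing $\mathfrak{u}_\infty=1$, not to the unnormalized reduction-of-order formula $\mathfrak{u}=\mathfrak{v}\int\rho^{1-d}\mathfrak{v}^{-2}\,\dd\rho$ (which gives $W_0=1$), but this constant drops out of the final equivalence.
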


\begin{proof}
Differentiating $\alpha^{1/\sigma*} w_*(\alpha \rho)$ with
respect to $\alpha$ at $\alpha = 1$ yields  
\begin{equation*}
\partial_{\alpha} \alpha^{1/\sigma*} w_*(\alpha \rho) |_{\alpha = 1} =
\frac{1 - a \rho^2}{\sigma_* (1+a \rho^2)^{1+1/{\sigma_*}}}.  
\end{equation*}
Multiplying it by $\sigma_*$ yields \eqref{eq:kernel} which satisfies
the initial conditions $\mathfrak{v}(0) = 1$ and $\mathfrak{v}'(0) =
0$. Due to the decay $\mathfrak{v}(\rho) \sim
\rho^{-2/\sigma_*}$ as $\rho \to \infty$, we have
$\mathfrak{v}_{\infty} = 0$ in \eqref{eq:v-limit}. Furthermore,
$\mathfrak{v} \in L^2_r$ if $d \ge 5$, and due to
smoothness, we have $\mathfrak{v} \in H^2_r$ if $d \ge 5$.  
	 
The second, linearly independent solution $\mathfrak{w} \in
\mathcal{C}^2(0,\infty)$ of $\mathcal{M}_0 \mathfrak{w} = 0$ is given
by the Wronskian relation  
\begin{equation}
\label{eq:Wronskian}
\mathfrak{v}(\rho) \mathfrak{w}'(\rho) - \mathfrak{v}'(\rho)
\mathfrak{w}(\rho) = \rho^{-(d-1)}, \quad \rho \in (0,\infty), 
\end{equation}
where the norming factor is uniquely chosen. It is clear from
\eqref{eq:Wronskian} that $\mathfrak{w}(\rho) \sim \rho^{-(d-2)}$ as
$\rho \to 0$ with the singularity prescribed at the regular singular
point $\rho = 0$. It is also clear from \eqref{eq:Wronskian} that
$\mathfrak{w}(\rho) \to \mathfrak{w}_{\infty}$ as $\rho \to \infty$
with $\mathfrak{w}_{\infty} \neq 0$. Solving $\mathcal{M}_0
\mathfrak{g} = f$ by the variation of constant formula, we get  
\begin{equation}
\label{eq:var-constant}
\mathfrak{g}(\rho) = \mathfrak{v}(\rho) \int_0^{\rho} \varrho^{d-1}
\mathfrak{w}(\varrho) f(\varrho) \dd \varrho - \mathfrak{w}(\rho)
\int_0^{\rho} \varrho^{d-1} \mathfrak{v}(\varrho) f(\varrho) \dd
\varrho. 
\end{equation}
The lower limit of integration in \eqref{eq:var-constant} is chosen at
$0$ to satisfy the initial conditions $\mathfrak{g}(0) =
\mathfrak{g}'(0) = 0$, e.g. if $f$ is bounded at $\rho = 0$, then
$\mathfrak{g}(\rho) \sim \rho^2$ as $\rho \to 0$. On the other hand,
we use the Cauchy--Schwarz inequality and obtain for $\rho_0 \gg 1$,  
\begin{equation*}
\left| \int_{\rho_0}^{\rho}\varrho^{d-1} \mathfrak{w}(\varrho)
  f(\varrho) d \varrho \right| \le C |\mathfrak{w}_{\infty}| \| f
\|_{L^2_r} \| 1 \|_{L^2_r(\rho_0,\rho)}  \le C |\mathfrak{w}_{\infty}|
\| f \|_{L^2_r} \rho^{\frac{d}{2}}.  
\end{equation*}
Since $\mathfrak{v}(\rho) \sim \rho^{-(d-2)}$ as $\rho \to \infty$ and
$d \ge 5$, the first term in \eqref{eq:var-constant} has the zero
limit as $\rho \to \infty$. Then, we compute from the second term in
\eqref{eq:var-constant}  that  
\begin{equation*}
\mathfrak{g}_{\infty} := \lim_{\rho \to \infty} \mathfrak{g}(\rho) =
-\mathfrak{w}_{\infty} \int_0^{\infty} \varrho^{d-1}
\mathfrak{v}(\varrho) f(\varrho) \dd \varrho,	 
\end{equation*}
where the last term is equivalent to $\langle \mathfrak{v}, f
\rangle$, which is well-defined since $f, \mathfrak{v} \in
L^2_{r}$ for $d \ge 5$. Thus $\mathfrak{g}_{\infty} = 0$ if
and only if $\langle \mathfrak{v}, f \rangle = 0$. 
\end{proof}

\begin{remark}
Resuming the Emden--Fowler transformation in the inhomogeneous case,
\begin{equation*}
\rho = e^t, \quad W_*(t) = e^{t/\sigma_*} w_*(\rho), \quad 
\mathfrak{G}(t) = e^{t/\sigma_*} \mathfrak{g}(\rho), \quad 
F(t) = e^{t/\sigma_*} f(\rho),
\end{equation*}
the relation $\mathcal{M}_0 \mathfrak{g} = f$ is transformed to the
equivalent form 
\begin{equation}
\label{eq:G}
-\mathfrak{G}''(t) + \frac{1}{\sigma_*^2} \mathfrak{G}(t) -
(1+2\sigma_*) |W_*(t)|^{2\sigma_*} \mathfrak{G}(t) = e^{2t} F(t),  
\end{equation}
where $W_*$, given by \eqref{eq:W}, is now exponentially decaying with
the rate $e^{-|t|/\sigma_*}$ as $|t| \to \infty$. The homogeneous
solution $\mathfrak{v}$ in \eqref{eq:kernel}  is related to the
translational mode $W'_*(t)$ after the Emden--Fowler transformation,
whereas the constraint $\langle \mathfrak{v}, f \rangle = 0$
is equivalent to the Fredholm condition $\int_{-\infty}^{\infty}
e^{2t} W_*'(t) F(t) \dd t = 0$ required to solve the linear
inhomogeneous equation \eqref{eq:G}  to avoid the exponential growth
of solutions at $\infty$. 
\end{remark}

The following proposition provides an alternative approach in the study of
the asymptotic behavior of
$\epsilon(\sigma) \to 0$ as $\sigma \to \sigma_*$ for $d \ge 5$ and recovers exactly the same expression for $\epsilon'(\sigma_*)$ given by \eqref{eq:deriv-eps}. 

\begin{proposition}\label{th-limit}
For every $d \ge 5$, there exist unique solutions 
$\mathfrak{z}_*,\mathfrak{w}_* \in \mathcal{C}^2(0,\infty)$ of the
linear inhomogeneous equations  
\begin{equation}
  \label{eq:deriv-alg}
\mathcal{M}_0 \mathfrak{z}_* = -w_*
\end{equation}
and
\begin{equation}
\label{eq:inhom-alg}
\mathcal{M}_0 \mathfrak{w}_* = (\ln w_*^2) w_*^{1 + 2\sigma_*},
\end{equation}
satisfying $\mathfrak{z}_*(0) = \mathfrak{z}_*'(0) = 0$ and
$\mathfrak{w}_*(0) = \mathfrak{w}_*'(0) = 0$. If the mapping
$(0,\sigma_*) \ni \sigma \mapsto \epsilon(\sigma) \in (0,\infty)$ is
$\mathcal{C}^1$ at $\sigma = \sigma_*$, then $\epsilon(\sigma_*) =
0$ and $\epsilon'(\sigma_*)$ is given by  \eqref{eq:deriv-eps}.
\end{proposition}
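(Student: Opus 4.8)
The plan is to obtain $\epsilon'(\sigma_*)$ by linearizing the initial-value problem \eqref{eq:ivp-w} at $\sigma=\sigma_*$ and reading off the answer from a solvability (Fredholm) condition against the explicit kernel element $\mathfrak{v}$ of $\mathcal{M}_0$. For the existence and uniqueness of $\mathfrak{z}_*$ and $\mathfrak{w}_*$ I would directly invoke the variation-of-constants formula \eqref{eq:var-constant} from Lemma~\ref{lem-Fredholm}, which produces, for any $f\in L^2_r$, the unique $\mathcal{C}^2$ solution of $\mathcal{M}_0\mathfrak{g}=f$ with $\mathfrak{g}(0)=\mathfrak{g}'(0)=0$. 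It then suffices to verify that both sources lie in $L^2_r$ for $d\ge 5$: one has $w_*\in L^2_r$ exactly when $d\ge 5$, and since $(d-2)\sigma_*=2$ gives $w_*^{1+2\sigma_*}\sim\rho^{-(d+2)}$ at infinity, the weight $\rho^{d-1}(\ln w_*^2)^2 w_*^{2+4\sigma_*}$ is integrable (the logarithm being harmless near $\rho=0$).

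For the value $\epsilon(\sigma_*)=0$, I would use that a $\mathcal{C}^1$ map is continuous, so the sublinear bound \eqref{eq:eps-sublin} forces $\epsilon(\sigma_*)=\lim_{\sigma\to\sigma_*}\epsilon(\sigma)=0$. I would then differentiate \eqref{eq:ivp-w} in $\sigma$ at $\sigma=\sigma_*$, setting $\dot w:=\partial_\sigma w_\sigma|_{\sigma=\sigma_*}$. Using $\partial_\sigma w^{2\sigma+1}=(2\sigma+1)w^{2\sigma}\dot w+(\ln w^2)w^{2\sigma+1}$, the vanishing of $\epsilon(\sigma_*)$, and the identity $(2\sigma_*+1)w_*^{2\sigma_*}=(1+2\sigma_*)(1+a\rho^2)^{-2}$, this yields
\begin{equation*}
\mathcal{M}_0\dot w = (\ln w_*^2)w_*^{1+2\sigma_*}-\epsilon'(\sigma_*)w_*,
\end{equation*}
with $\dot w(0)=\dot w'(0)=0$ inherited from the constant data $w_\sigma(0)=1$, $w_\sigma'(0)=0$. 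By linear superposition and uniqueness of the zero-data solution of this linear equation, $\dot w=\mathfrak{w}_*+\epsilon'(\sigma_*)\mathfrak{z}_*$.

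The slope is then pinned down by requiring $\dot w$ to have the correct behavior at infinity. For $d\ge 5$ the convergence $w_\sigma\to w_*$ in $H^1_r$ (Theorem~\ref{theo:algebraic}) makes $\dot w$ belong to $L^2_r$, equivalently $\dot w_\infty=0$; by the Fredholm characterization of Lemma~\ref{lem-Fredholm} this is equivalent to $\langle\mathfrak{v},(\ln w_*^2)w_*^{1+2\sigma_*}-\epsilon'(\sigma_*)w_*\rangle=0$, so that
\begin{equation*}
\epsilon'(\sigma_*)=\frac{\langle\mathfrak{v},(\ln w_*^2)w_*^{1+2\sigma_*}\rangle}{\langle\mathfrak{v},w_*\rangle},
\end{equation*}
the denominator being nonzero as the explicit computation shows. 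It remains to evaluate these two integrals with $\mathfrak{v}=(1-a\rho^2)(1+a\rho^2)^{-1-1/\sigma_*}$ and $w_*=(1+a\rho^2)^{-1/\sigma_*}$; the substitution $\rho=e^t$ together with antiderivative identities of the same flavor as \eqref{identity-1}--\eqref{identity-2} reduces everything to elementary integrals and must return \eqref{eq:deriv-eps}.

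I expect the main obstacle to be the rigorous justification that $\dot w\in L^2_r$, equivalently $\dot w_\infty=0$: this is where the assumed $\mathcal{C}^1$ regularity of $\epsilon$ and the $H^1_r$ convergence of the profiles must be exploited precisely at the endpoint $\sigma_*$, where the decay of the ground state degenerates from exponential to algebraic. Without this decay condition the Fredholm alternative is silent and $\epsilon'(\sigma_*)$ would be undetermined; once it is in hand, the remaining integral evaluation, though tedious, is routine and recovers \eqref{eq:deriv-eps}.
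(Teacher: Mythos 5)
Your proposal is correct and follows essentially the same route as the paper: solve \eqref{eq:deriv-alg} and \eqref{eq:inhom-alg} via Lemma~\ref{lem-Fredholm} after checking the sources are in $L^2_r$ for $d\ge 5$, differentiate \eqref{eq:ivp-w} in $\sigma$ at $\sigma_*$ to get $\partial_\sigma w_\sigma|_{\sigma=\sigma_*}=\mathfrak{w}_*+\epsilon'(\sigma_*)\mathfrak{z}_*$ by superposition, pin down $\epsilon'(\sigma_*)$ from the decay (Fredholm) condition of Lemma~\ref{lem-Fredholm}, and evaluate the two pairings against $\mathfrak{v}$ by the Emden--Fowler substitution to recover \eqref{eq:deriv-eps}. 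The only cosmetic difference is that you justify the decay requirement on $\partial_\sigma w_\sigma$ via the $H^1_r$ convergence of the profiles, whereas the paper simply imposes it from $w_\sigma(\rho)\to 0$ as $\rho\to\infty$ for every $\sigma$ — both arguments sit at the same level of rigor, consistent with the conditional ($\mathcal{C}^1$) formulation of the statement.
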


\begin{proof}
For $d \ge 5$, we have 
\begin{equation*}
w_* \in L^2_r \quad \mbox{\rm and} \quad 
(\ln w_*^2) w_*^{1 + 2\sigma_*} \in L^2_r.
\end{equation*}
Hence, solutions 
$\mathfrak{z}_* \in \mathcal{C}^2(0,\infty)$ and $\mathfrak{w}_* \in
\mathcal{C}^2(0,\infty)$ of  
\eqref{eq:deriv-alg} and \eqref{eq:inhom-alg} are well defined by
Lemma \ref{lem-Fredholm}. However, we show that both
$\mathfrak{z}_*(\rho)$ and $\mathfrak{w}_*(\rho)$ do not decay to $0$
as $\rho \to \infty$. \\  

\underline{For $\mathfrak{z}_*$,} we check the Fredholm condition
\begin{align*}
  \langle \mathfrak{v}, w_* \rangle
  &= \int_0^{\infty} \rho^{1 + \frac{2}{\sigma_*}} \frac{(1-a
    \rho^2)}{(1 + a \rho^2)^{1 + \frac{2}{\sigma_*}}} \dd \rho 
= \int_{-\infty}^{\infty} e^{2t + \frac{2t}{\sigma_*}} \frac{(1-a
  e^{2t})}{(1 + a e^{2t})^{1 + \frac{2}{\sigma_*}}} \dd t \\ 
&=\sigma_* \int_{-\infty}^{\infty} e^{2t} 
  W_*'(t) W_*(t) \dd t = -\sigma_*  \int_{-\infty}^{\infty} e^{2t}
  W_*^2(t) \dd t < 0, 
\end{align*}
where we have used the Emden--Fowler transformation  \eqref{eq:EF}
with $W_*(t)$ given by \eqref{eq:W}, and integrated by parts with the
sufficient decay of $W_*^2(t) \sim e^{-2|t|/\sigma_*}$ at $\pm
\infty$ since $\sigma_* < 1$ if $d \ge 5$. Since $\langle
\mathfrak{v}, w_* \rangle \neq 0$, the  
unique solution $\mathfrak{z}_* \in \mathcal{C}^2(0,\infty)$ 
of \eqref{eq:deriv-alg} satisfying $\mathfrak{z}_*(0) =
\mathfrak{z}_*'(0) = 0$  
does not decay to $0$ as $\rho \to \infty$. \\

\underline{For $\mathfrak{w}_*$,} we use the Emden--Fowler
transformation  \eqref{eq:EF} and check the Fredholm condition  
\begin{align*}
  \langle \mathfrak{v}, (\ln w_*^2) w_*^{1 + 2\sigma_*} \rangle
&= -\frac{2}{\sigma_*} \int_0^{\infty} \rho^{1 + \frac{2}{\sigma_*}}
    \frac{(1-a \rho^2)}{(1 + a \rho^2)^{3 + \frac{2}{\sigma_*}}} \ln(1
    + a \rho^2) \dd \rho \\ 
&= -\frac{2}{\sigma_*}  \int_{-\infty}^{\infty} e^{2t +
  \frac{2t}{\sigma_*}} \frac{(1-a e^{2t})}{(1 + a e^{2t})^{3 +
  \frac{2}{\sigma_*}}} \ln(1 + a e^{2t}) \dd t. 
\end{align*}
Since 
\begin{equation*}
\frac{\dd}{\dd t} \frac{e^{2t + \frac{2t}{\sigma_*}}}{(1 + a e^{2t})^{2 +
    \frac{2}{\sigma_*}}} = \frac{2 (1+\sigma_*)}{\sigma_*} e^{2t +
  \frac{2t}{\sigma_*}} \frac{(1-a e^{2t})}{(1 + a e^{2t})^{3 +
    \frac{2}{\sigma_*}}}, 
\end{equation*}
integration by parts removes the logarithmic term and yields 
\begin{align*}
  \langle \mathfrak{v}, (\ln w_*^2) w_*^{1 + 2\sigma_*} \rangle
  &= -\frac{1}{(1+\sigma_*)}  \int_{-\infty}^{\infty} \ln(1 + a
    e^{2t}) \frac{\dd}{\dd t} \frac{e^{2t + \frac{2t}{\sigma_*}}}{(1 + a
    e^{2t})^{2 + \frac{2}{\sigma_*}}}  \dd t \\ 
&= \frac{2a}{(1+\sigma_*)}  \int_{-\infty}^{\infty} \frac{e^{4t +
  \frac{2t}{\sigma_*}}}{(1 + a e^{2t})^{3 + \frac{2}{\sigma_*}}}  \dd
  t > 0. 
\end{align*}
Since $\langle \mathfrak{v}, (\ln w_*^2) w_*^{1 + 2\sigma_*}
\rangle \neq 0$, the unique solution $\mathfrak{w}_* \in
\mathcal{C}^2(0,\infty)$  of \eqref{eq:inhom-alg} satisfying
$\mathfrak{w}_*(0) = \mathfrak{w}_*'(0) = 0$ does not decay to $0$ as
$\rho \to \infty$. \\ 

\underline{End of the proof.}
We have proved the existence and uniqueness of solutions 
$\mathfrak{z}_*  \in \mathcal{C}^2(0,\infty)$ and $\mathfrak{w}_* \in
\mathcal{C}^2(0,\infty)$ of the linear inhomogeneous equations
\eqref{eq:deriv-alg} and \eqref{eq:inhom-alg}. Let $w_{\sigma}(\rho)
=w(\rho;\epsilon(\sigma),\sigma) 
\in \mathcal{C}^2(0,\infty) \cap L^{\infty}(0,\infty)$ be defined from
the family of solutions of \eqref{eq:ivp-w}. Suppose that $\epsilon$
is $\mathcal{C}^1$ up to $\si=\si_*$. Differentiating
\eqref{eq:ivp-w} with respect to $\sigma$ yields  
\begin{equation}
\label{eq:der-w-sigma}
\frac{\dd w_{\sigma}}{\dd\sigma}(\rho) |_{\sigma = \sigma_*} =
\epsilon'(\sigma_*) \mathfrak{z}_*(\rho) +  \mathfrak{w}_*(\rho). 
\end{equation}
Since $w_{\sigma}(\rho) \to 0$ as $\rho \to \infty$ for every $\sigma
\in (0,\sigma_*)$, we require  
$\frac{\dd w_{\sigma}}{\dd\sigma}(\rho) \to 0$ as $\rho \to\infty$ for
every $\sigma \in (0,\sigma_*)$ including the limit $\sigma \to
\sigma_*^-$. By Lemma \ref{lem-Fredholm}, this is possible if and only
if $\epsilon'(\sigma_*)$  is chosen such that 
\begin{equation}
\label{eq:tech}
- \epsilon'(\sigma_*) \langle \mathfrak{v}, w_* \rangle + 
\langle \mathfrak{v}, (\ln w_*^2) w_*^{1 + 2\sigma_*} \rangle = 0.
\end{equation}
In order to derive the explicit expression \eqref{eq:deriv-eps}, we
integrate by parts with the use of the Emden--Fowler transformation
\eqref{eq:EF} and \eqref{eq:W}:  
\begin{align*}
  \langle \mathfrak{v}, (\ln w_*^2) w_*^{1 + 2\sigma_*} \rangle
  &= -\frac{\sigma_*}{2 (1+\sigma_*)^2}  \int_{-\infty}^{\infty}  
e^{2t + \frac{2t}{\sigma_*}} \frac{d}{dt} \frac{1}{(1 + a e^{2t})^{2 +
    \frac{2}{\sigma_*}}} \dd t \\ 
&= \frac{1}{(1+\sigma_*)}  \int_{-\infty}^{\infty}  
 \frac{e^{2t + \frac{2t}{\sigma_*}}}{(1 + a e^{2t})^{2 +
  \frac{2}{\sigma_*}}} \dd t \\ 
&= -\frac{\sigma_*}{2a (1+\sigma_*) (2+\sigma_*)}  \int_{-\infty}^{\infty}  
e^{\frac{2t}{\sigma_*}} \frac{\dd}{\dd t} \frac{1}{(1 + a e^{2t})^{1 +
  \frac{2}{\sigma_*}}} \dd t \\ 
&= \frac{1}{a (1+\sigma_*) (2+\sigma_*)}  \int_{-\infty}^{\infty}  
 \frac{e^{\frac{2t}{\sigma_*}}}{(1 + a e^{2t})^{1 + \frac{2}{\sigma_*}}} \dd t,
\end{align*}
where all integration by parts are justified due to the fast
exponential decay at $\pm \infty$. Recalling \eqref{identity-1} and
\eqref{identity-2},  we have 
\begin{equation*}
\langle \mathfrak{v}, w_* \rangle = -\sigma_*
\int_{-\infty}^{\infty} \frac{e^{2t +\frac{2t}{\sigma_*}}}{(1 + a
  e^{2t})^{\frac{2}{\sigma_*}}} \dd t,
\end{equation*}
and 
\begin{equation*}
\langle \mathfrak{v}, (\ln w_*^2) w_*^{1 + 2\sigma_*} \rangle = 
\frac{1}{2 a (1+\sigma_*) (2+\sigma_*)}  \int_{-\infty}^{\infty}  
\frac{e^{\frac{2t}{\sigma_*}}}{(1 + a e^{2t})^{\frac{2}{\sigma_*}}} \dd t.
\end{equation*}
In order to show that one expression is proportional to the other one,
we note that  
\begin{equation*}
\frac{\dd}{\dd t} \frac{e^{\frac{2t}{\sigma_*} - 2t}}{(1 + a
  e^{2t})^{\frac{2}{\sigma_*}-1}} = \frac{2(1-\sigma_*)}{\sigma_*}
\frac{e^{\frac{2t}{\sigma_*} - 2t}}{(1 + a
  e^{2t})^{\frac{2}{\sigma_*}}} - \frac{2a}{\sigma_*}
\frac{e^{\frac{2t}{\sigma_*}}}{(1 + a e^{2t})^{\frac{2}{\sigma_*}}}. 
\end{equation*}
Since $\sigma_* < 1$, integration by parts yields due to the
exponential decay at $\pm \infty$ that  
\begin{equation*}
\langle \mathfrak{v}, (\ln w_*^2) w_*^{1 + 2\sigma_*} \rangle = 
\frac{(1-\sigma_*)}{2 a^2 (1+\sigma_*) (2+\sigma_*)}  \int_{-\infty}^{\infty}  
\frac{e^{\frac{2t}{\sigma_*}-2t}}{(1 + a e^{2t})^{\frac{2}{\sigma_*}}} \dd t.
\end{equation*}
Replacing $t = -\tilde{t} - \frac{1}{2} \ln a^2$ yields finally 
\begin{align*}
  \langle \mathfrak{v}, (\ln w_*^2) w_*^{1 + 2\sigma_*} \rangle
 &= \frac{(1-\sigma_*)}{2 (1+\sigma_*) (2+\sigma_*)}
   \int_{-\infty}^{\infty}
   \frac{e^{-\frac{2\tilde{t}}{\sigma_*}+2\tilde{t}}
   a^{-\frac{2}{\sigma_*}}}{(1 + a^{-1}
   e^{-2\tilde{t}})^{\frac{2}{\sigma_*}}} \dd \tilde{t} \\ 
&= -\frac{(1-\sigma_*)}{2 \sigma_*  (1+\sigma_*) (2+\sigma_*)}\langle
  \mathfrak{v}, w_* \rangle. 
\end{align*}
Substituting this relation into \eqref{eq:tech} yields
\eqref{eq:deriv-eps}. 
\end{proof}

\begin{remark}
We show that equation \eqref{eq:inhom-alg} for $\mathfrak{w}_*$ can be
reduced to equation \eqref{eq:deriv-alg} for $\mathfrak{z}_*$ by using
an elementary transformation. To do so, we rewrite
\eqref{eq:inhom-alg} explicitly: 
\begin{equation*}
\mathfrak{w}_*''(\rho) + \frac{(2 + \sigma_*)}{\sigma_* \rho}
\mathfrak{w}_*'(\rho) + \frac{1+2\sigma_*}{(1+a \rho^2)^2}
\mathfrak{w}_*(\rho) = \frac{2}{\sigma_*} \frac{\ln(1+a
  \rho^2)}{(1+a\rho^2)^2} \frac{1}{(1+a \rho^2)^{\frac{1}{\sigma_*}}}. 
\end{equation*}	
Substitution 
\begin{equation*}
\mathfrak{w}_*(\rho) = \frac{f(\rho)}{(1+a \rho^2)^{\frac{1}{\sigma_*}}}
\end{equation*}	
brings this equation to the form 
\begin{equation*}
f''(\rho) +  \frac{(2 + \sigma_*)}{\sigma_* \rho} f'(\rho) - \frac{4a
  \rho}{\sigma_* (1+a \rho^2)} f'(\rho) + \frac{2\sigma_*}{(1+a
  \rho^2)^2} f(\rho) = \frac{2}{\sigma_*} \frac{\ln(1+a
  \rho^2)}{(1+a\rho^2)^2}. 
\end{equation*}
Transformation 
\begin{equation*}
f(\rho) = \frac{1}{\sigma_*^2} \ln(1+a \rho^2) + g(\rho) 
\end{equation*}
brings the right-hand-side to a rational function 
\[g''(\rho) +  \frac{(2 + \sigma_*)}{\sigma_* \rho} g'(\rho) - \frac{4a
  \rho}{\sigma_* (1+a \rho^2)} g'(\rho) + \frac{2\sigma_*}{(1+a
  \rho^2)^2} g(\rho) 
  = -\frac{1}{\sigma_* (1+\sigma_*)}
\frac{(1+\sigma_*) - a \rho^2}{(1+a\rho^2)^2}. \]
By using the substitution 
\begin{equation*}
g(\rho) = b + c \rho^2 + h(\rho), 
\end{equation*}
we obtain coefficients $(b,c)$ to reduce the right-hand side to the
constant function. Elementary computations give 
\begin{equation*}
b = -\frac{1}{2 \sigma_*^2} \left[ 1 + \frac{1}{(1+\sigma_*) (2 +
 \sigma_*)} \right], \quad c = \frac{1}{8 (1+\sigma_*) (2 + \sigma_*)}, 
\end{equation*}
and 
\begin{equation*}
h''(\rho) + \frac{d-1}{\rho} h'(\rho) - \frac{4a \rho}{\sigma_* (1+a
  \rho^2)} h'(\rho) + \frac{2\sigma_*}{(1+a \rho^2)^2} h(\rho) =
\frac{4c (1 - \sigma_*)}{\sigma_*}. 
\end{equation*}
To summarize, the transformation 
\begin{equation*}
\mathfrak{w}_*(\rho) = \frac{\ln(1+a \rho^2) + \sigma_*^2 b +
  \sigma_*^2 c \rho^2}{\sigma_*^2 (1+a \rho^2)^{\frac{1}{\sigma_*}}} +
\tilde{\mathfrak{w}}_*(\rho) 
\end{equation*}
with the uniquely defined $(b,c)$ reduces \eqref{eq:inhom-alg} to 
\begin{equation*}
\mathcal{M}_0 \tilde{\mathfrak{w}}_* = -\frac{(1 - \sigma_*)}{2
  \sigma_* (1+\sigma_*) (2 + \sigma_*)} w_*, 
\end{equation*}	
which coincides with \eqref{eq:deriv-alg} up to the scalar multiplication. 
The first term in $\mathfrak{w}_*$ is decaying as $\rho \to \infty$ if
$d \ge 5$ but does not satisfy the initial condition
$\mathfrak{w}_*(0) = 0$. To correct the solution, we use the
homogeneous solution $\mathfrak{v}$ given by \eqref{eq:kernel}, which
is also decaying as $\rho \to \infty$, and redefine $\mathfrak{w}_*$
in the equivalent form: 
\begin{equation}
\label{eq:w-p-alg}
\mathfrak{w}_*(\rho) = \frac{\ln(1+a \rho^2) + \sigma_*^2 b +
  \sigma_*^2 c \rho^2}{\sigma_*^2 (1+a \rho^2)^{\frac{1}{\sigma_*}}} -
b \mathfrak{v}(\rho) + \frac{(1 - \sigma_*)}{2 \sigma_* (1+\sigma_*)
  (2 + \sigma_*)}  \mathfrak{z}_*(\rho), 
\end{equation}
so that $\mathfrak{w}_*(0) = \mathfrak{w}_*'(0) = 0$ is satisfied. 
By using \eqref{eq:w-p-alg}, we can rewrite \eqref{eq:der-w-sigma}
explicitly as 	 
\begin{align*}
  \frac{\dd w_{\sigma}}{\dd\sigma}(\rho) |_{\sigma = \sigma_*}
  &= \epsilon'(\sigma_*) \mathfrak{z}_*(\rho) +  \mathfrak{w}_*(\rho), \\
&= \left[ \epsilon'(\sigma_*) + \frac{(1 - \sigma_*)}{2 \sigma_*
  (1+\sigma_*) (2 + \sigma_*)} \right] \mathfrak{z}_*(\rho) \\ 
& \qquad  + \frac{\ln(1+a \rho^2) + \sigma_*^2 b + \sigma_*^2 c
  \rho^2}{\sigma_*^2 (1+a \rho^2)^{\frac{1}{\sigma_*}}} - b
  \mathfrak{v}(\rho), 
\end{align*}
Since $\mathfrak{z}_*$ does not decay to $0$ as $\rho \to \infty$, 
we have $\frac{\dd w_{\sigma}}{\dd\sigma}(\rho) |_{\sigma = \sigma_*} \to
0$ as $\rho \to \infty$ if and only if $\epsilon'(\sigma_*)$ satisfies
\eqref{eq:deriv-eps}. Thus, both the explicit solution for \eqref{eq:der-w-sigma} and the Fredholm condition \eqref{eq:tech} result in the same expression 
\eqref{eq:deriv-eps}, which was found from the quotient
\eqref{quotient-eps}. 
\end{remark}

\begin{remark}
In view of \eqref{eq:deriv-eps} and \eqref{eps-si-again}, we obtain
the leading-order asymptotic divergence of $\alpha(\sigma)$ as $\sigma
\to \sigma_*^-$ as  
\begin{equation*}
\alpha(\sigma) \sim \left( |\epsilon'(\sigma_*)| (\sigma_* -
\sigma) \right)^{-\frac{1}{2\sigma_*}} \quad \mbox{\rm as}
 \;\; \sigma \to \sigma_*^-. 
\end{equation*}
Furthermore, we have 
\begin{equation*}
w_{\sigma}(\rho) \sim w_*(\rho) + (\sigma - \sigma_*) \frac{\dd
  w_{\sigma}}{\dd \sigma}(\rho) |_{\sigma = \sigma_*}, 
\end{equation*}
where the correction term
\begin{align} \label{eq:asymp_eps}
(\sigma - \sigma_*) \frac{\dd w_{\sigma}}{\dd\sigma}(\rho) |_{\sigma =
  \sigma_*} =  (\sigma - \sigma_*) \left[ \frac{\ln(1+a \rho^2) +
  \sigma_*^2 b + \sigma_*^2 c \rho^2}{\sigma_*^2 (1+a
  \rho^2)^{\frac{1}{\sigma_*}}} - b \mathfrak{v}(\rho) \right] 
\end{align}
is positive for $\rho \in (0,\rho_0)$ and negative for $\rho \in
(\rho_0,\infty)$ for some $\rho_0 > 0$. 	
\end{remark}

\begin{remark}
	Due to the term $c \rho^2$ in \eqref{eq:asymp_eps} with $c \neq 0$, the
	first term in \eqref{eq:asymp_eps} is not in $L^2_r$ for $5
	\le d \le 8$. This shows that the ground state near the algebraic 
	soliton cannot be generally expanded as powers of $(\sigma_* - \sigma)$ in  $L^2_r$. 
\end{remark}

\begin{remark}
  For $d = 4$, we have $\sigma_* = 1$ so that the exact solution
  $\mathfrak{w}_*$ given by \eqref{eq:w-p-alg} is independent of
  $\mathfrak{z}_*$. It is clear that the solution
  \eqref{eq:w-p-alg} is non-decaying as $\rho \to \infty$ due to
  the $c \rho^2$ term. Nevertheless, the balance with the term
  $\mathfrak{z}_*$ is impossible since $w_* \notin
  L^2_r$ for $d = 4$ and $\mathfrak{z}_*$ is
  logarithmically growing as $\rho \to \infty$. This shows that
the asymptotic behavior of $\epsilon(\sigma)$ as $\sigma \to \sigma^*$ is
  more complicated than the power expansion.  
  Similarly, we do not have a balance between $\mathfrak{z}_*$ growing as 
  $\mathcal{O}(\rho)$ and bounded $\mathfrak{w}_*$ for $d = 3$
  ($\sigma_* = 2$).  
  Modifications of the asymptotic behavior of the ground state
  near the algebraic soliton for $d = 4$ and $d = 3$ are discussed 
  within the Gross--Pitaevskii equation with a harmonic potential 
  in \cite{Pel1,Pel2}.
  The modified asymptotic formula for $\alpha(\si)$ for
  $d=3,4$ was found in \cite{GaPePuSe03}. The approach introduced in
  \cite{WeiWu2022}, based on  the projection of the Aubin-Talenti
  soliton under the Bessel operator, may provide an alternative
  approach for this case.
\end{remark}

\section{Numerical approximations of the ground state}
\label{sec:num}

\subsection{Radial finite differences}

We first recall the definition of the radial Lebesgue spaces $L^p_r(\R^d)$ associated with the norms
\[ \| u \|_{L^p_r} = \left( C(d) \int_0^{\infty} |u(r)|^{p} r^{d-1}
    \dd r  \right)^{1/p} , \] 
with the constants $C(d)$ given in Table \ref{table_d_sphere}.
\begin{table}[h] 
\centering
\begin{tabular}{ | c || c | c | c | c | c | }   
\hline
 $d$ &  1  & 2 & 3 & 4 & 5 \\
 \hline
 $C(d)$ & $2$ & $2 \pi$ & $4 \pi$ &  $2 \pi^2$ & $ \frac{8}{3}\pi^2$ \\ 
 \hline
\end{tabular}
\caption{Surface area of the unit sphere in $\mathbb{R}^d$ for $d = 1,\ldots,5$.}
\label{table_d_sphere}
\end{table}

The $d$-dimensional radial Laplace operator 
\[ \Delta_r u = \frac{1}{r^{d-1}}  \partial_r \left( r^{d-1} \partial_r u  \right)   \]
on the finite interval $\left[0,R\right]$, with Neumann boundary condition at $r=0$ and Dirichlet boundary condition at $R>0$, is then discretized as follows. We fix a mesh size $h=\frac{2R}{2M+1}$ with $M>0$ an integer, and define both \textit{regular} and \textit{staggered} grid points as
\[  r_j= j h \quad \text{and} \quad r_{j+\frac12}=
  \left(j+\frac12\right) h, \quad 0\leq j \leq M, \] 
so that we define the approximation of the radial Laplace operator $\Delta_r^h$ on the staggered grid as
\[ \Delta_r^h u_{j+\frac12} = \frac{1}{h^2}
  \frac{1}{r_{j+\frac12}^{d-1}} \left( r_{j+1}^{d-1} u_{j+\frac{3}{2}}
    - ( r_{j+1}^{d-1} + r_{j}^{d-1}) u_{j+\frac12} + r_{j}^{d-1}
    u_{j-\frac12} \right), \] 
for $0\leq j \leq M-1$, with Neumann and Dirichlet boundary conditions imposed by 
\[ u_{-\frac12}= u_{\frac12} \quad \text{and} \quad u_{M+\frac12}=0.  \]
Note that we have also introduced a ghost point $r_{-\frac12}$ to
approximate the Neumann boundary condition $u'(0)=0$ with the
second-order accuracy. 


\subsection{Gradient flow with $L^{2\sigma+2}$ normalization} \label{sec:num_L_2sigma_normalization}

The unique ground state with the profile $u_\si$ satisfying \eqref{eq:u} for $0 < \sigma < \sigma_*$ can be numerically approximated as follows. We first recall the definition of the \textit{Nehari manifold} for the variational problem \eqref{eq:variation}:
\[  
\mathcal{N}= \enstq{\phi \in H^1(\R^d)}{I(\phi) = \frac{1}{\sigma}\int_{\R^d} |\phi|^{2\sigma +2}, \quad \phi \neq 0}, 
\]
associated to the quadratic functional
\[ 
I(\phi) = \frac12 \| \nabla \phi \|^2_{L^2(\mathbb{R}^d)} + \frac{1}{\sigma} \| \phi \|^2_{L^2(\mathbb{R}^d)}. 
\]
We then perform, inspired by the method of \cite{Wang2022}, a normalized gradient flow scheme as follows. Starting from an initial radial state $\phi^0_{\sigma}(r)=e^{-r^2}$ for $r\in \R_+$, we realize a linearly implicit normalized gradient flow that writes for $n \in \N^*$ as
\begin{equation}
\label{numerical-method-1}
\left\{ \begin{aligned}
&\frac{\phi^{\ast,n+1}_{\sigma}-\phi^n_{\sigma}}{\tau} =  \Delta_r \phi^{\ast,n+1}_{\sigma}+\frac{1}{\sigma} ( |\phi^n_{\sigma}|^{2\sigma}-1 ) \phi^{\ast,n+1}_{\sigma}, \\
&\phi^{n+1}_{\sigma}=\lambda_{n+1} \phi^{\ast,n+1}_{\sigma}, \quad \lambda_{n+1}=\left( \frac{\sigma  I(\phi^{\ast,n+1}_{\sigma})}{\| \phi^{\ast,n+1}_{\sigma}\|_{L^{2\sigma+2}_{r}}^{2\sigma +2}}  \right)^{\frac{1}{2\sigma}}.
\end{aligned}  \right. 
\end{equation}
We stop the algorithm when 
\begin{equation} \label{eq:stopping_criterion}
\frac{\| \phi^{n+1}_{\sigma}-\phi^n_{\sigma} \|_{L^2_r}}{\tau} \leq \eps  
\end{equation}
for a given threshold $\eps>0$. A fixed point
$(\phi_\si,\phi_\si^\ast)$ of the iterative method \eqref{numerical-method-1} is then solution to  $\phi_\si=\lambda \phi_\si^\ast$ with 
$\lambda^{2\si} =  \sigma I(\phi^{\ast}_{\sigma})/\|
\phi^{\ast}_{\sigma}\|_{L^{2\sigma+2}_{r}}^{2\sigma +2}$ 
and  
\[  \frac{1-\lambda}{\tau} \phi_{\sigma}=  \Delta_r \phi_{\sigma}+\frac{1}{\sigma} ( |\phi_{\sigma}|^{2\sigma}-1 ) \phi_{\sigma}.
\]
Denoting $\gamma=1+\si(1-\lambda)/\tau$ and performing the rescaling
\[ 	
\mathrm{u}_{\sigma}(r)= \gamma^{-\frac{1}{1-2\si}}\phi_\si
  \left( \sqrt{\gamma} r \right),   
\] 
we get the numerical approximation $\mathrm{u}_{\sigma}$ of the profile
$u_{\sigma}$ of the ground state. In the following interpretation of numerical results, we identify $\mathrm{u}_\si$ for $\si \in (0,\si_*)$.

\begin{remark} \label{rem:stiffness}
The number of iterations needed in order to
achieve the stopping criterion \eqref{eq:stopping_criterion} greatly increases as $\sigma
\rightarrow 0$ or as $\sigma \rightarrow \sigma_*$ for $d \geq
3$. This suggests that our numerical scheme is stiff with respect to
both endpoint limits. In particular, we hardly go beyond $\sigma = 1.6$ for $d=3$ and $\sigma= 0.9$ for $d=4$ as $\sigma \rightarrow \sigma_*$.
\end{remark}

In Figure~\ref{fig: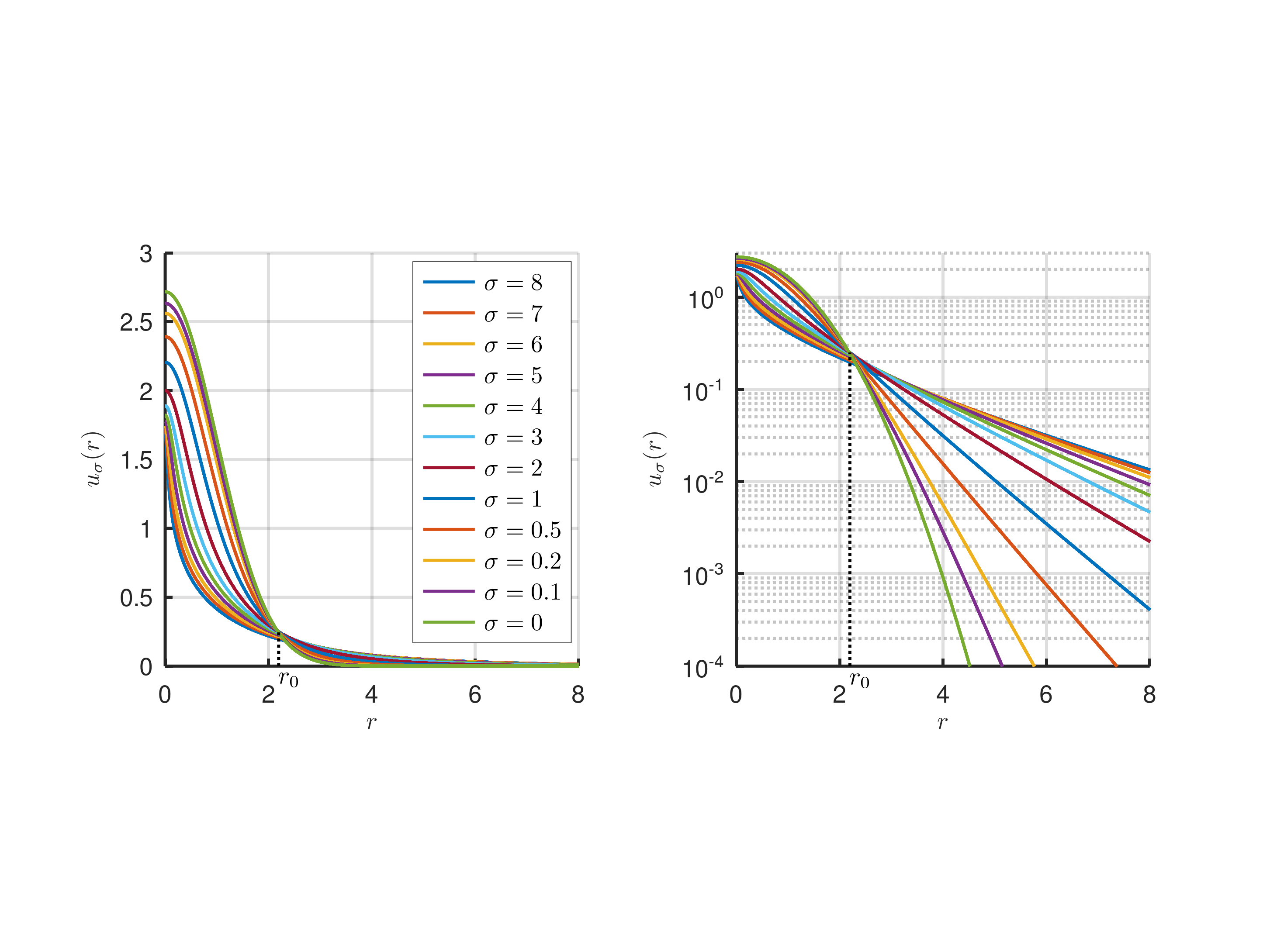} we plot
the (approximated) ground state profile $\mathrm{u}_{\sigma}$ for~$\sigma$ varying
between 0.1 and 8 in 2D, as well as the Gausson $u_0$ explicitly given
by~\eqref{eq:Gaussian} and the expected root $r_0=\sqrt{2+2\sqrt{2}}$
computed through equation \eqref{eq:root_r_0}, in both linear scale and logarithmic scale. We see that the successive ground states $u_{\sigma}$  do converge towards the Gausson $u_0$ as $\si \to 0$. The crossing point $r_\si>0$ between curves $u_\si$ and $u_0$ also tends towards the expected root $r_0$ of equation \eqref{eq:root_r_0} as $\si \to 0$. On the
other hand, we observe in the limit $\si \to \infty$ that the ground
state profile becomes steeper and steeper at the origin. 

\begin{figure}[htb!]
\centering
\hspace*{-1cm}
\includegraphics[width=1\textwidth,trim = 6cm 18cm 8cm 18cm, clip]{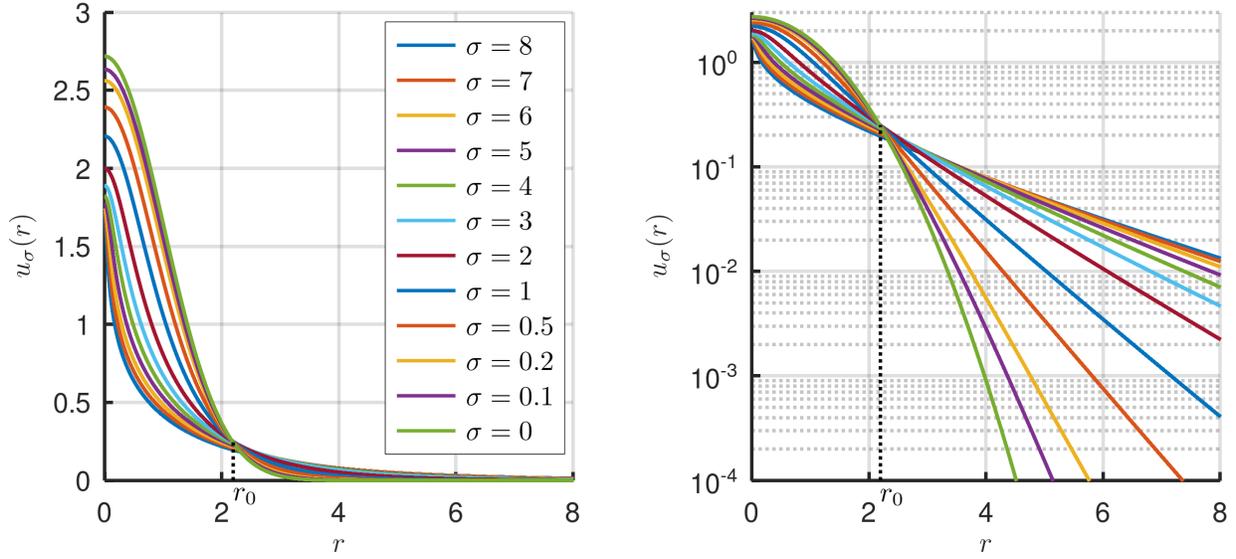}	  
\caption{Ground state profile $u_{\sigma}$ for $d=2$ in linear scale \textit{(left)} and logarithmic scale \textit{(right)}, for different values of $\si$.}
\label{fig:GS_2D_up_to_8_with_r_sigma.png}
\end{figure}

Recall that $\alpha(\sigma)=\|u_{\sigma}\|_{L^{\infty}}=u_{\sigma}(0)$
for $0< 
\sigma < \sigma_*$, and
$\alpha(0)=\|u_{0}\|_{L^{\infty}}=u_{0}(0)=e^{d/2}$. We plot in
Figure~\ref{fig: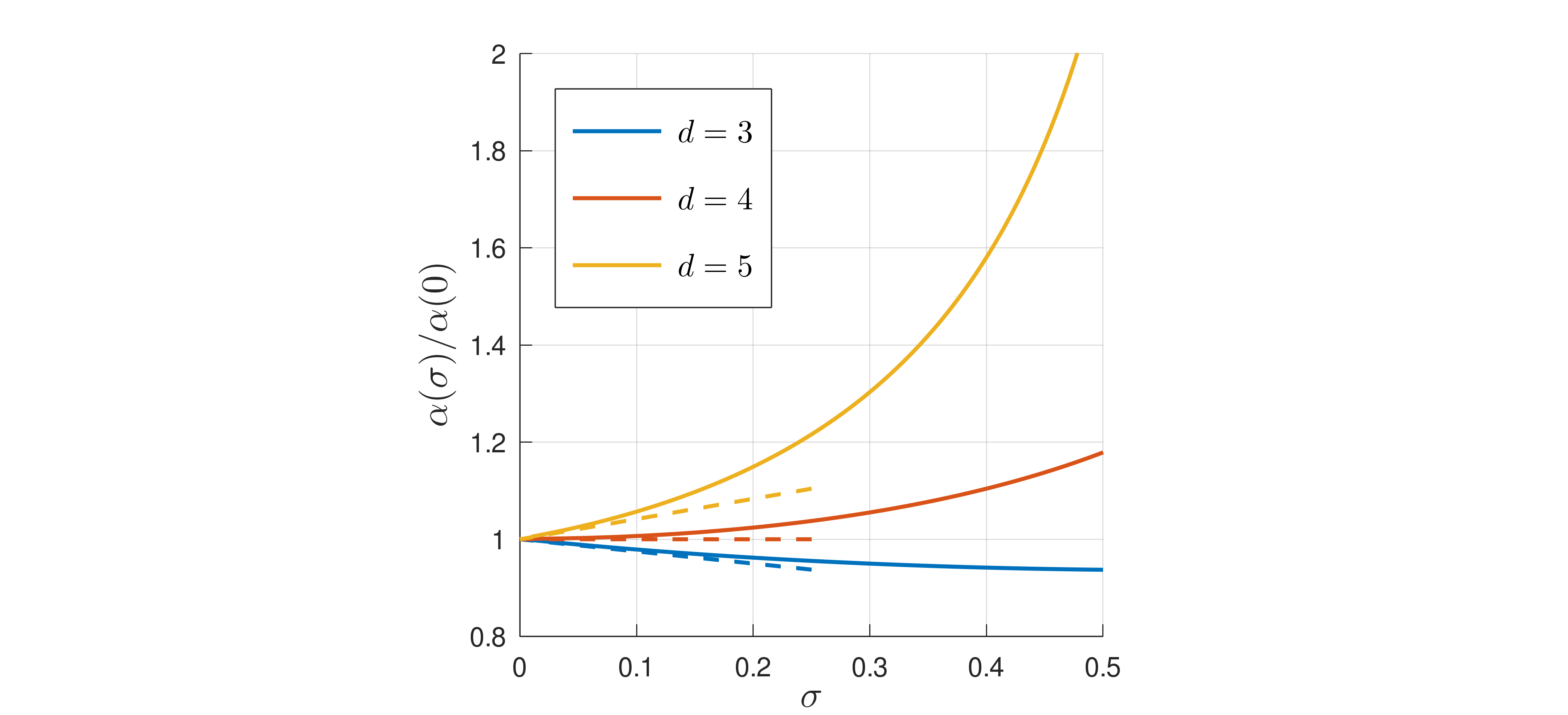} the dependence
of  $\alpha(\sigma)=u_{\sigma}(0)$ normalized by the value of
$\alpha(0)$ versus $\sigma \in (0,0.5)$ for $d=3, 4, 5$, as well as
the expected slopes at the origin 
explicitly given by \eqref{eq:slope_at_0}. Each curve matches its
respective slope at $\si=0$.

\begin{figure}[htb!]
	\centering
	\includegraphics[width=0.8\textwidth,trim = 25cm 0cm 25cm 2cm, clip]{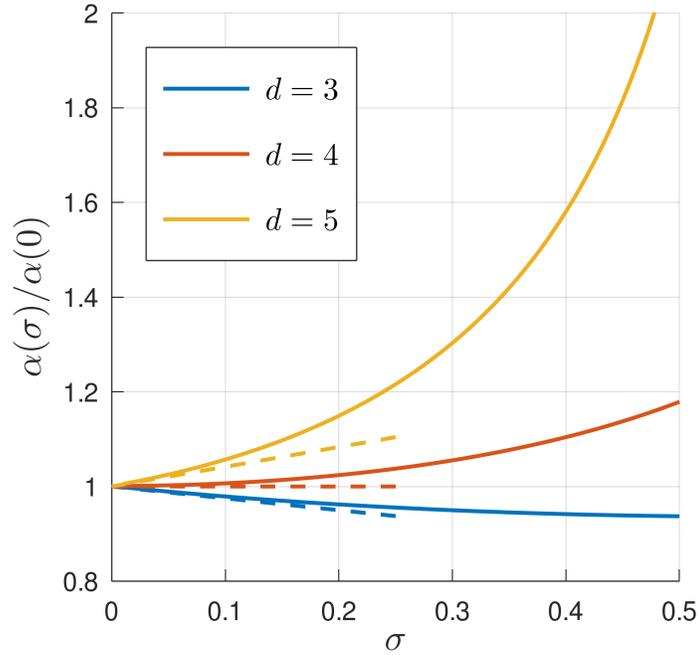}	   
	\caption{Dependence of $\alpha(\sigma)/\alpha(0)$ versus $\sigma$ and 
		the expected slopes \eqref{eq:slope_at_0} for $d=3,4,5$.}  
	\label{fig:Evolution_d_3_4_5_expected_slopes.png}
\end{figure}

\subsection{Gradient flow with $L^{\infty}$ normalization}

We now illustrate the limit $\sigma \to \sigma_*$ for
$d\geq 3$, which corresponds to the convergence to the algebraic
soliton. One should first mention that this limit is very stiff for
the $L^{2\sigma+2}$ normalization algorithm as the $L^\infty$-norm of
the ground states is unbounded when $\sigma \to \sigma_*$. This
motivates the use of a new gradient flow approach, based on the
formulation \eqref{eq:ivp-w}. We now perform, starting from the
explicit initial radial state $w^0_\si(\rho)=w_{*}(\rho)$ for $\rho\in
\R_+$, a linearly implicit normalized gradient flow that writes for $n
\in \N^*$ as 
\begin{equation}
\label{numerical-method-2}
\left\{ \begin{aligned}
&\frac{w^*_{\sigma}-w^n_{\sigma}}{\tau} =  \Delta_\rho w^*_{\sigma}+
  |w^n_{\sigma}|^{2\sigma} w^*_{\sigma} -\epsilon_0(\sigma)
  w^*_{\sigma}, \\ 
&w^{n+1}_{\sigma}= \frac{w^*_{\sigma}}{\| w^*_{\sigma}\|_{L^{\infty}}},
\end{aligned}  \right. 
\end{equation}
where we use the approximation 
\[ 
\epsilon_0(\sigma) :=  \frac{(\sigma_*-1)(\sigma_*-\sigma)}{2
    \sigma_*(1+\sigma_*)(2+\sigma_*)}  
\] 
from equation \eqref{eq:deriv-eps} (instead of the implicit constant
$\epsilon(\sigma)$). We stop the algorithm when 
\[ \frac{\| w^{n+1}_{\sigma}-w^n \|_{L^2_r}}{\tau} \leq \eta  \]
for a fixed threshold $\eta>0$. A fixed point
$( \mathrm{w}_\si, \mathrm{w}_\si^\ast)$ of the iterative method \eqref{numerical-method-2} 
is then solution to  $\mathrm{w}_\si^\ast = \mu \mathrm{w}_\si$ with 
$\mu =  \| \mathrm{w}_\si^*\|_{L^{\infty}}$ and  
\[  
\frac{\mu-1}{\mu \tau} \mathrm{w}_\si = \Delta_{\rho} \mathrm{w}_\si +
\mathrm{w}_\si^{2\sigma+1} - \epsilon_0(\sigma) \mathrm{w}_\si.
\]
Therefore, the numerical value of $\epsilon(\si)$ in the
formulation \eqref{eq:ivp-w} is adjusted as 
\[ 	
\underline{\epsilon}(\sigma) := \epsilon_0(\sigma) + \frac{\mu-1}{\mu \tau}, 
\] 
where $\mathrm{w}_{\si}$ provides the numerical approximation of the profile $w_{\si}$.

In order to illustrate the convergence of the ground state profile $w_{\si}$
to the algebraic soliton $w_*$ as $\sigma \to \sigma_* = \frac{2}{3}$ for $d=5$, we perform both
normalized gradient flow methods on the range $\sigma \in \left[0.54,
  0.66 \right]$. More precisely: 
\begin{itemize}
\item For $\sigma=0.54$ and $0.58$, we perform the gradient flow with
  $L^{2\sigma+2}_r$ normalization based on the numerical method \eqref{numerical-method-1}, after which we rescale the solution 
  to the profile $w_{\si}$ by using the scaling transformation 
  \eqref{eq:transformation}. 
\item For $\sigma =0.62$ and $0.66 $, we perform the gradient flow
  with $L^{\infty}$ normalization based on the numerical method \eqref{numerical-method-2}.
\end{itemize}

In Figure~\ref{fig: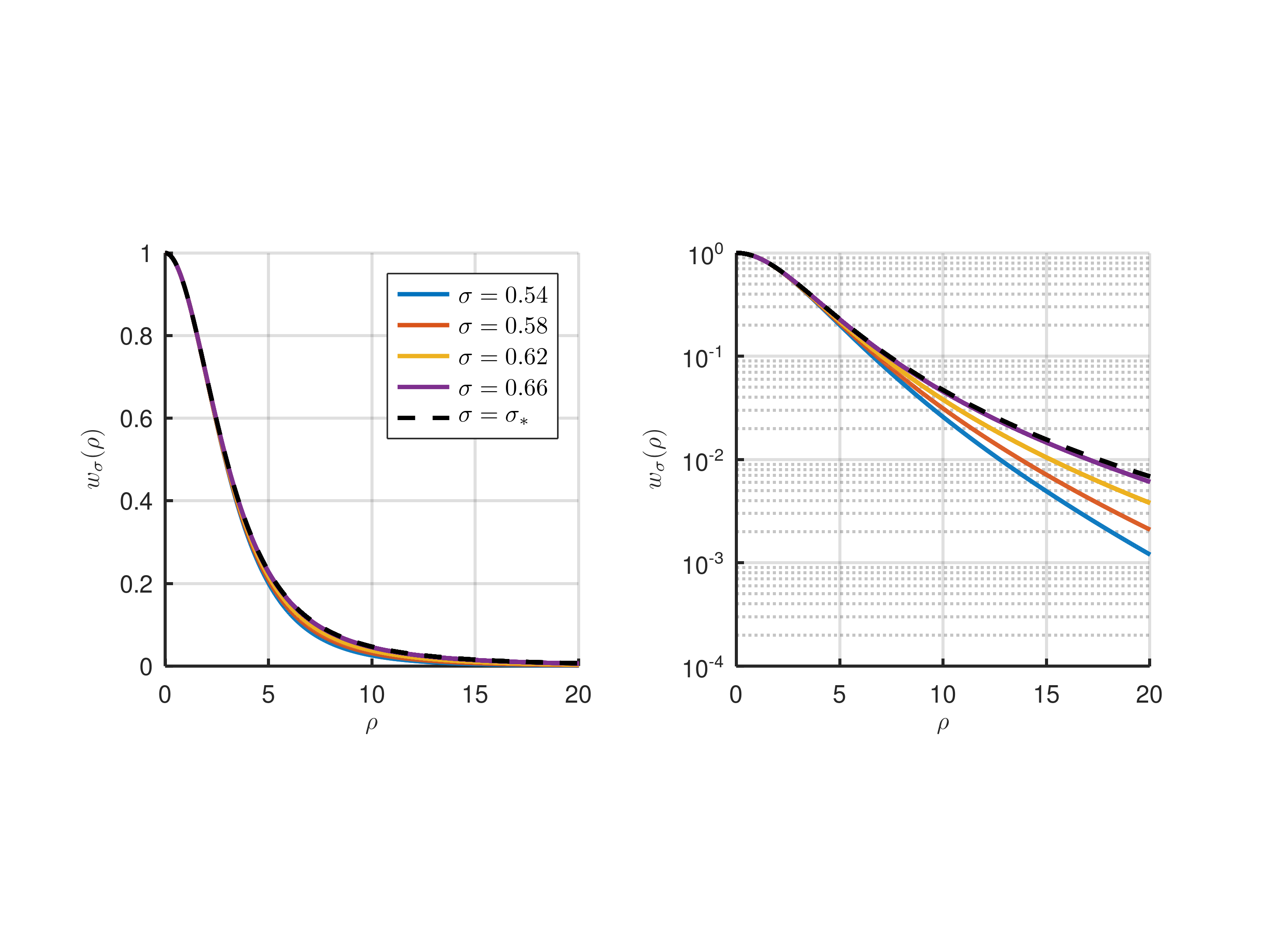} we plot the
corresponding approximated ground states $\mathrm{w}_{\sigma}$ 
for different values of $\si$, as well as the algebraic soliton $w_*$ 
for $\si_*$ given by \eqref{eq:alg-soliton}, in both linear scale and logarithmic scale. Once again, this illustrates the convergence of the ground state $w_{\si}$ towards the algebraic soliton $w_*$. 

In Figure~\ref{fig: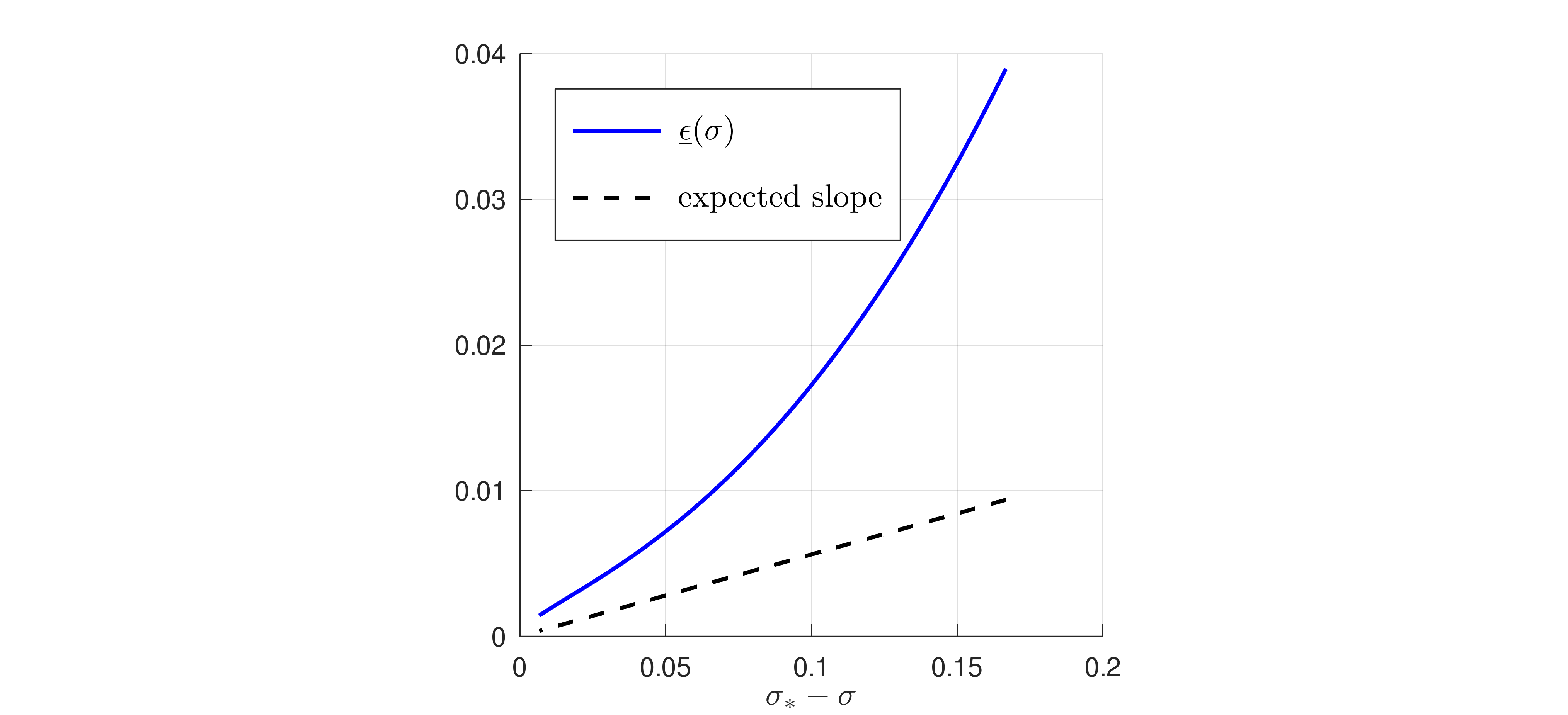}, we plot the dependence of  $\underline{\epsilon}(\sigma)$ versus $\sigma \in
\left[0.5,0.66\right]$, along with the predicted evolution slope 
$\epsilon_0(\sigma)$. The dependence $\underline{\epsilon}(\sigma)$ 
matches $\epsilon_0(\sigma)$ as $\si \to \si_*$. 

\begin{figure}[htb!]
\centering
\includegraphics[width=1\textwidth,trim = 6cm 17cm 8cm 18cm, clip]{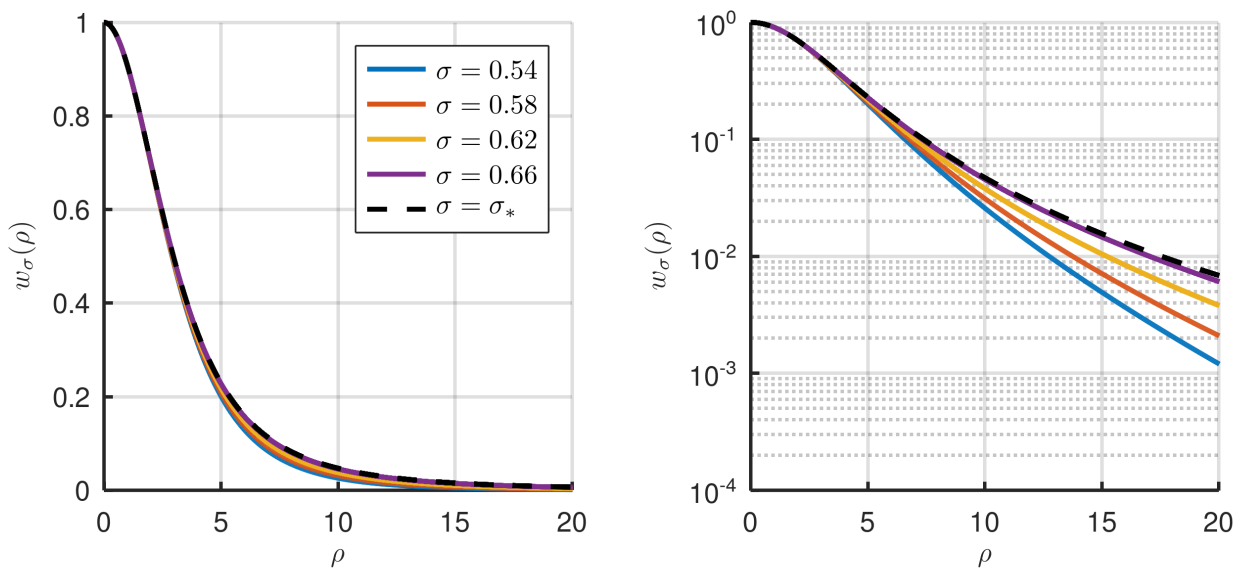} 
\caption{Ground state profile $w_{\sigma}$ for $d = 5$ in the linear scale
  \textit{(left)} and the logarithmic scale \textit{(right)}.} 
\label{fig:ground_state_sigma_star.png}
\end{figure}

\begin{figure}[htb!]
\centering
\includegraphics[width=0.8\textwidth,trim = 20cm 0cm 20cm 4cm, clip]{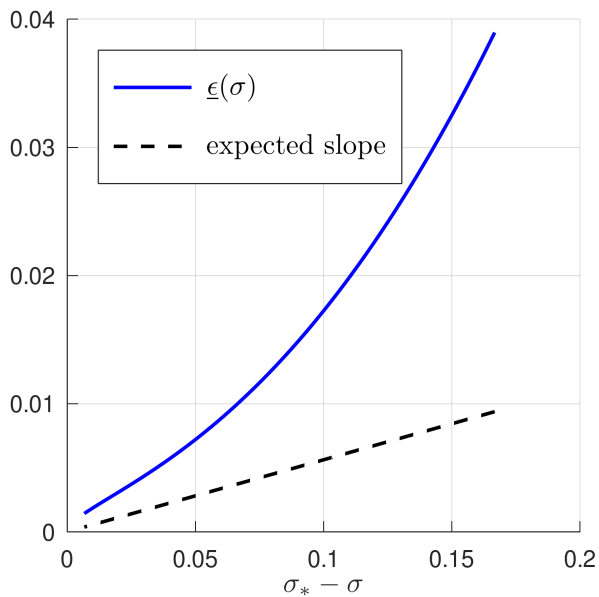}	  	 
\caption{Dependence of $\underline{\epsilon}(\sigma)$ (solid curve) 
	and the asymptotic approximation $\epsilon_0(\sigma)$ (dashed line)
versus $\sigma \in \left[0.5,0.66\right]$ for $d = 5$.} 
\label{fig:test_epsilon_sigma.png}
\end{figure}

Finally, we plot the difference $w_{\sigma}-w_*$ for several $\sigma$
in Figure~\ref{fig: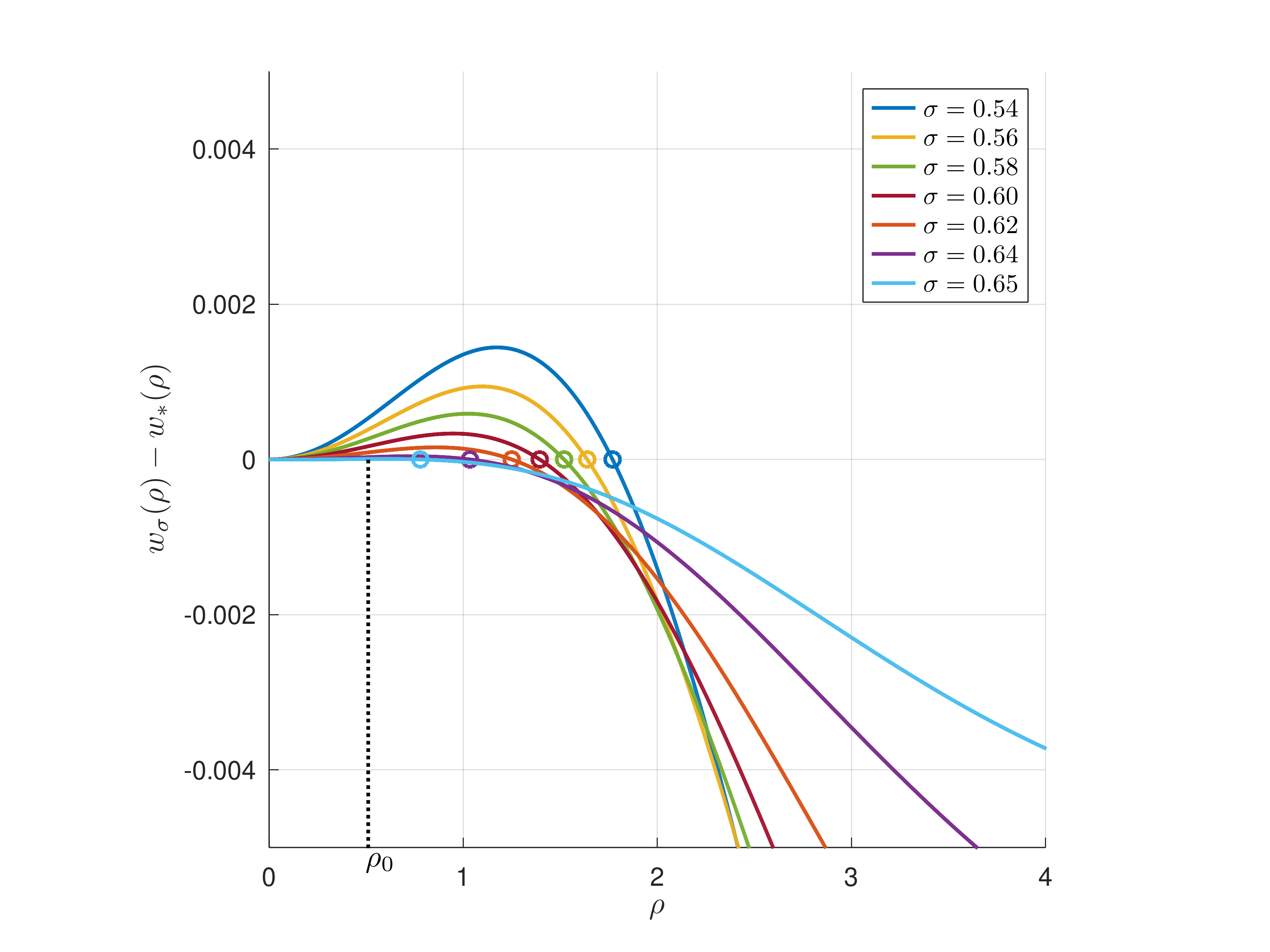}, as well as the expected
limit crossing point  
\[ \rho_0=\underset{\si \to \si_*}{\lim} \mathrm{argmin} \enstq{\rho
    >0}{w_{\sigma}(\rho)-w_*(\rho)}>0,  \] 
obtained from the asymptotic approximation \eqref{eq:asymp_eps}. The value of $\rho_0$ was computed numerically as the unique
positive root of the function given by \eqref{eq:asymp_eps}.

\begin{figure}[htb!]
\centering
\includegraphics[width=0.7\textwidth,trim = 12cm 3cm 14cm 8cm, clip]{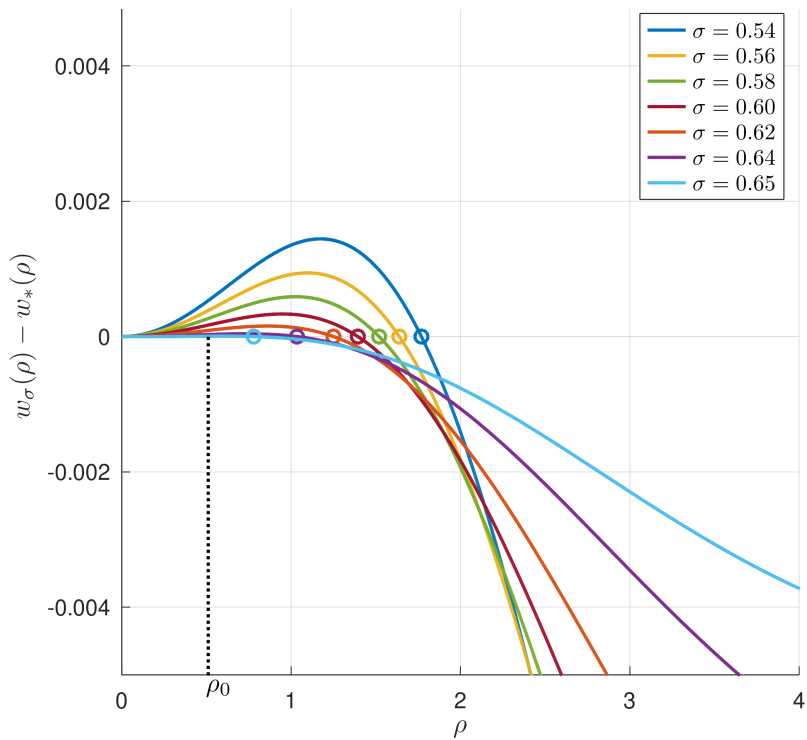}	  
\caption{Difference $w_{\sigma}-w_*$ and expected crossing point $\rho_0$.}
\label{fig:crossing_point_5D.png}
\end{figure}






\bibliographystyle{abbrv}
\bibliography{ground}

\end{document}